\newtheorem {thm}{Theorem}[section]
\newtheorem {prop}[thm]{Proposition} 
\newtheorem {lem}[thm]{Lemma}
\newtheorem {defn}[thm]{Definition}
\def\N{{\Bbb N}}
\def\Z{{\Bbb Z}}
\def\R{{\Bbb R}}
\def\P{{\Bbb P}}
\def\e{{\varepsilon}}
\def\one{\mathbbm{1}}
\def\D{\Delta}
\def\a{\alpha}
\def\sm{\setminus}
\def\b{\beta}
\def\d{\delta}
\def\e{\varepsilon}
\def\phi{\varphi}
\def\g{\gamma}
\def\l{\lambda}
\def\r{\rho}
\def\s{\sigma}
\def\t{\tau}
\def\x{\xi}
\def\o{\omega}
\def\D{\Delta}
\def\L{\Lambda}
\def\O{{\Omega}}
\def\P{{\Phi}}
\def\T{\T}
\def\es{{\emptyset}}
\def\FF{{\mathcal F}}
\def\GG{{\mathcal{G}}}
\def\P{{\mathcal P}}
\def\FF{\boldsymbol{{\mathcal F}}}
\def\F{{\mathcal F}}
\def\CC{{\mathcal C}}
\def\oo{\boldsymbol{\omega}}
\def\OO{\boldsymbol{\Omega}}
\def\zetazeta{\boldsymbol{\zeta}}
\def\etaeta{\boldsymbol{\eta}}
\def\PP{\boldsymbol{P}}
\def\V|{{\Vert}}
\keywords{Gibbsianness, non-Gibbsianness, point processes, Widom-Rowlinson model, spin-flip dynamics, quasilocality, non almost-sure quasilocality, $\tau$-topology}
\subjclass[2010]{Primary 82C21; secondary 60K35}
\begin{document}

\author{Benedikt Jahnel}
\address[Benedikt Jahnel]{Weierstrass Institute Berlin, Mohrenstr. 39, 10117 Berlin, Germany, \texttt{https://www.wias-berlin.de/people/jahnel/}}
\email{Benedikt.Jahnel@wias-berlin.de}

\author{Christof K\"ulske}
\address[Christof K\"ulske]{Ruhr-Universit\"at   Bochum, Fakult\"at f\"ur Mathematik, D44801 Bochum, Germany, \texttt{http://www.ruhr-uni-bochum.de/ffm/Lehrstuehle/Kuelske/kuelske.html}}
\email{Christof.Kuelske@ruhr-uni-bochum.de}


\title{The Widom-Rowlinson model 
under spin flip: Immediate loss and sharp recovery of quasilocality 
}

\date{\today}

\maketitle

\begin{abstract} 
We consider the continuum Widom-Rowlinson model under independent spin-flip dynamics
and investigate whether and when the time-evolved point process has an (almost) quasilocal 
specification (Gibbs-property of the time-evolved measure). Our study provides a first analysis of a Gibbs-non-Gibbs transition for point particles in Euclidean space.
We find a picture of loss and recovery, in which even more regularity is lost faster than it is for  
time-evolved spin models on lattices.

We show immediate loss of quasilocality in the percolation regime,  
with full measure of discontinuity points for any specification. 
For the color-asymmetric percolating model, there is a transition from 
this non-a.s.~quasilocal regime back to an everywhere Gibbsian regime. At the 
sharp reentrance time $t_G>0$ the model is a.s.~quasilocal. For the color-symmetric model there is no reentrance.
On the constructive side, for all $t>t_G$,  we provide everywhere quasilocal specifications for the time-evolved measures 
and give precise exponential estimates on the influence of boundary 
condition. 

%

\end{abstract}


\section{Introduction} 

\subsection{Gibbsian point particle systems vs.~lattice spin systems}
The study of spatial point processes has enjoyed considerable attention 
in the last years. Point processes appear as models for interacting 
point particles in mathematical statistical mechanics \cite{Ru99,JaKoMe15,DeDrGe12,GeHa96} as a description of gases or fluids. 
Adding to this, there has been a lot of related activity from stochastic geometry \cite{HuLaSc16,GeScTh15,ChStKeMe13,PePi96,HiJaPaKe16} and the introduction of Malliavin calculus \cite{LaPeScTh14,PeRe16}. 

The Gibbsian theory of point particles in infinite Euclidean space
presents more subtleties than the theory of lattice systems 
with uniformly convergent Hamiltonians. 
The issues existence, uniqueness, 
phase-transitions, 
variational principle are all more difficult \cite{DeDrGe12,KoPaRo12,De16,LeMaPr99,KutReb04,Ja16,BrKuLe84}. 
Loosely speaking Gibbsian point processes are difficult because there is a priori more chance for unboundedness. This comes for example since particle numbers in fixed finite volumes are not uniformly bounded, which in turn also leads to unbounded interaction energies. Moreover, due to the additional spatial degrees of freedom, there is also less spatial uniformity in the game, allowing for example condensation phenomena.
Hence not all lattice results have counterparts in the theory of point particles, and for 
some issues a canonical setup has yet to be found.  
In the present paper 
we are contributing to an understanding of Gibbs theory for point processes and its limits by 
an investigation of the possibility of Gibbs-non-Gibbs transitions. 

Parts of the difficulties of systems of point particles 
are already present in models of unbounded lattice spins 
which generically have also an unbounded interaction. Here the theory is less complete than the established theory for uniformly convergent Hamiltonians
\cite{Ge11,GeHaMa01}.  There are also some links between unbounded lattice spins and point particles: 
Some proofs for measures of point particles proceed by reduction 
to lattice systems via blocking procedures, for example where continuous particle configuration in a $\Z^d$-discretization window are considered as a single new spin variable, see \cite{Ru71,CoDaKoPa15,CaFa74}.


Gibbs-non-Gibbs transitions appear 
for lattice spin systems with absolutely convergent Hamiltonians where it has been observed 
that simple stochastic transformations (like spatial block averaging or stochastic time-evolutions) 
can produce non-localities which lead to a loss of the Gibbs property for the transformed measure \cite{EnFeSo93}. 
These non-localities appear in the conditional probabilities to see a configuration in a finite volume 
as a function of the conditioning outside the finite volume. 
They provide a strong deviation from the spatial Markov property of the image measure and are signs of a lack of regularity of the time-evolved measure. This is remarkable and 
may sometimes result in serious consequences, like the failure of variational principle, see \cite{KuLeRe04}. 
For Gibbsian initial measures 
they are caused by phase transitions of an internal system, conditioned to configurations 
of the image system we want to study.    
A different source of non-Gibbsian measures of lattice systems are projections of 
quantum spin chains \cite{DeMaSc15} where a mechanism of quantum 
entanglement instead of an internal phase transition 
is responsible for the appearing non-localities. 

It is the aim of the present paper to investigate the Widom-Rowlinson model (WRM) \cite{WiRo70}
as a prototypical system of Gibbsian point particles in all intensity regimes, under a stochastic time-evolution. 
To our knowledge this is the first study of Gibbsianness (or quasilocality of conditional probabilities) 
of a transformed system of point particles. 

\subsection{Results on the WRM under spin flip}
The continuum WRM is a model  
for point particles in Euclidean space, each carrying one of two colors (or spins). Point configurations
are distributed according to Poisson processes with possibly color-dependent intensities,  
which are conditioned to distances bigger than a given minimal value $2a$, 
between particles of different spins. 
The specification kernels obtained by this procedure are clearly local (in particular quasilocal)
as a function of the boundary configuration. 
It is one the first of a class of models of interacting colored point particles which was proved 
to have a phase transition at large and equal intensities, 
in spatial dimensions greater or equal to two \cite{ChChKo95,Ru71}.  
We apply a time-evolution which keeps the positions of the particles but randomly changes the colors according 
to independent Poissonian clocks. Note that only the initial configurations have to obey the color constraint for overlapping discs, see Figure~\ref{PixWRM} for an illustration. 

\begin{figure}[!htpb]
\centering
\input{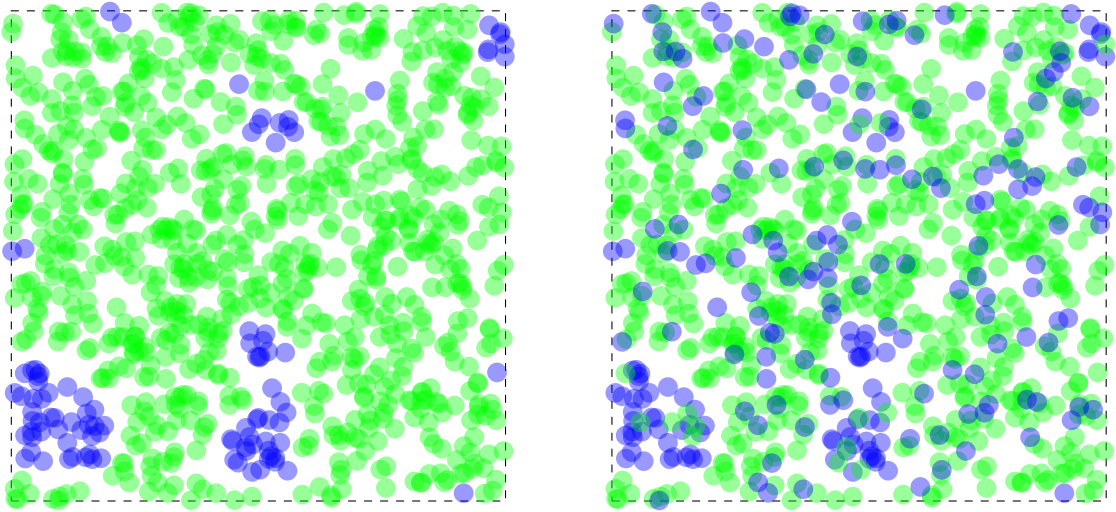}
\caption{Realization of the WRM in the phase transition regime under independent spin-flip at time zero (left) and for some positive time (right).}
\label{PixWRM}
\end{figure}

We prove that the following scenario of Gibbs-non-Gibbs transitions take place. The main features are illustrated in Figure~\ref{Schema}. Suppose the model has symmetric and sufficiently high activities, such that there is 
an infinite cluster of overlapping disks of the same color in the infinite volume.  
Then there is an immediate loss of quasilocality for any specification (system of conditional probabilities)
of the time-evolved measure which persists 
for all finite times. 
Moreover, the set of discontinuity points of any specification has measure one w.r.t.~
the time-evolved measure: There is no a.s.~Gibbsianness, but a.s.~non-Gibbsianness.  
The translation-invariant measures $\mu^+_t$ and $\mu^-_t$ 
obtained by time-evolution of the extremal translation invariant 
Widom-Rowlinson states $\mu^+>\mu^-$ have each their own specifications 
which are different for $t<\infty$. 

Still in the symmetric high-activity regime, 
we consider the limiting measure for $t=\infty$, where we randomly assign colors 
with equal probability independently of the spatial structure, while keeping the positions 
fixed. Its internal dependence 
properties are given by the grey measure which is obtained from the WRM 
by forgetting 
the color-assignment and keeping the spatial degrees of freedom only. 
For this measure we show that it is a.s.~non-Gibbs, too.   
While it is surprising that we even find a full-measure set of bad points, 
the failure of quasilocality goes in line with examples in which it has been observed that 
projections (here: to the spatial degrees of freedom) 
may cause non-localities from Gibbsian measures.

Suppose next that the model has sufficiently high, but different activities, such that there is an infinite cluster. 
Then we prove that there is a sharp reentrance time $t_{G}<\infty$ such that the following holds:  
There is a full-measure set of discontinuity points of any specification for the time-evolved measure 
for all $t\in (0,t_{G})$ and a uniformly quasilocal specification for the time-evolved measure 
for all $t\in(t_{G},\infty]$. For this quasilocal specification we obtain very explicit 
exponential bounds on the change of the measure in $\L\subset\R^d$ in total variation as a function of the conditioning far away from $\L$. At the reentrance time $t_G$ itself, there is non-Gibbsian, but a.s.~Gibbsian behavior.    
We also find a non-percolating small-time regime where 
almost-sure quasilocality, but not quasilocality everywhere holds, 
for any specification.

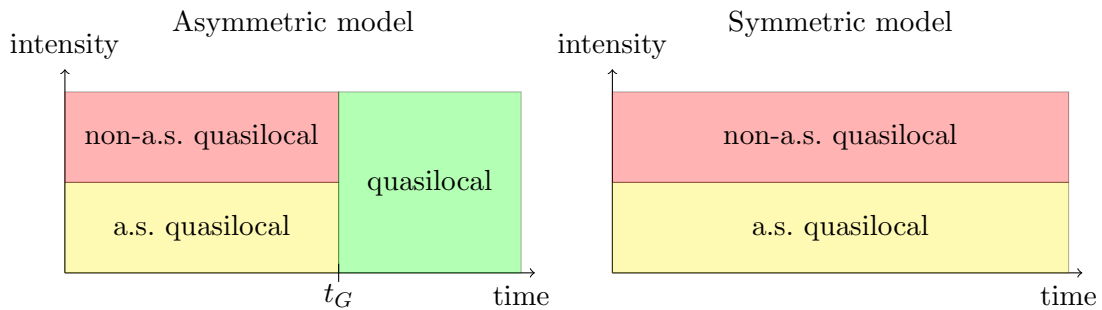
\begin{figure}[!htpb]
\centering
\begin{tikzpicture}[scale=0.6]


\draw [->](-12,0) -- (-1.7,0);
\draw [->](-12,0) -- (-12,4.5);
\draw [fill=yellow,opacity=0.3](-12,0) rectangle (-6,2);
\draw [fill=red,opacity=0.3](-12,2) rectangle (-6,4);
\draw [fill=green,opacity=0.3](-6,0) rectangle (-2,4);
\draw (-6,-0.2) -- (-6,0.2);
\node at (-6,-0.5) {$t_G$};
\node at (-2,-0.5) {time};
\node at (-12,5) {intensity};
\node at (-9,1) {a.s.~quasilocal};
\node at (-9,3) {non-a.s.~quasilocal};
\node at (-4,2) {quasilocal};
\node at (-7,5.5) {Asymmetric model};

\draw [->](0,0) -- (10.3,0);
\draw [->](0,0) -- (0,4.5);
\draw [fill=yellow,opacity=0.3](0,0) rectangle (10,2);
\draw [fill=red,opacity=0.3](0,2) rectangle (10,4);
\node at (10,-0.5) {time};
\node at (0,5) {intensity};
\node at (5,1) {a.s.~quasilocal};
\node at (5,3) {non-a.s.~quasilocal};
\node at (5,5.5) {Symmetric model};

\end{tikzpicture}
\caption{Illustration of Gibbs-non-Gibbs transitions in time and intensity for the WRM under independent spin flip.}
\label{Schema}
\end{figure}

\subsection{Lattice spins under time evolution}
The time-evolved Ising model in the integer lattice was studied 
\cite{EnFeHoRe02}. The authors in particular considered an initial configuration chosen according to a
low temperature plus measure of an Ising model in zero magnetic field 
in the phase transition regime, and considered 
the symmetric spin-flip dynamics
which randomly flips between the two possible spin values plus and minus according 
to Poissonian clocks, independently over the lattice sites. This paper, in which prototypical 
behavior of lattice spins under time-evolution was studied for the first time, was very 
fruitful and stimulating, and it is worth to compare our findings.  
We see similarities and analogies but also strong differences between point particles and lattice spins
like the Ising model.

Clearly the resulting time-evolved Ising measure $\mu_t$ 
converges locally to the independent symmetric product measure 
as time goes to infinity. Nevertheless, it was shown that, for large enough $t$, 
the conditional probabilities lose the property of quasilocality as a function of the conditioning, 
at some bad configurations. Different to our findings 
for the WRM, for small enough $t$ the Gibbs property was proved to be preserved. 
Indeed, short-time preservation of Gibbsianness is true rather generally \cite{LeRe02b,KuOp08}.
The sharpness of the transition between Gibbs and non-Gibbs at a particular threshold time (excluding 
multiple ins and outs to Gibbs) was conjectured but not proved.   

The hidden phase transitions responsible for the (non-removable) absence 
of quasilocality of conditional probabilities appear in the infinite system 
at time zero, conditional on very particular balancing bad configurations which are given at time $t$. 
These balancing 
bad configurations have to be chosen in such a way as to keep the conditional system neutral. 
An example of 
such a bad configuration for the time-evolved Ising measure is the plus/minus checkerboard configuration, 
and the mass of bad configurations w.r.t.~the time-evolved measure is zero, so that the time-evolved 
measure is a.s.~Gibbs.   
In nonzero magnetic field $h>0$ the situation is different: For large enough $t$
the measure becomes Gibbs again, but sharpness of this reentrance into the Gibbs measures could not be proved. 

A similar analysis was carried out for a model of real-valued spins in the phase transition regime 
under site-wise independent diffusive time-evolution of the spins \cite{KuRe06}. 
We see a picture of short-time preservation of Gibbsianness and loss of the Gibbs property 
at finite times. As a notable difference to WRM and to the Ising model 
there is no recovery even in positive magnetic field, which is caused by 
the unboundedness of spins. In Subsection~\ref{Gibbsianness for point processes} we give further details on the relations of the various notions of Gibbsianness on the lattice and in continuum.

The low-temperature Ising model under spin flip on regular trees was investigated in \cite{EnErIaKu12}, 
using entirely different techniques of non-homogeneous tree recursions. 
As a phenomenon which seems to be possible only for trees 
the Gibbs-properties depend on the initial Gibbs state: 
The maximal Gibbs state $\mu^+$ 
and the Gibbs state obtained with free boundary conditions (which are different 
on trees) behave very differently under time evolution. The free state has short-time Gibbsianness, 
but even shows two transitions in time: 
Non-Gibbsianness with some bad configuration at intermediate time, and full-measure set 
of discontinuities for large times.   

A bulk of related work about Gibb-non-Gibbs transitions under time evolution has appeared \cite{FeHoMa14,ErKu10,HoReZu15,ReRu14,EnKuOpRu10}. This includes mean-field and Kac-models for which large-deviation techniques 
lead to variational principles which are more tractable than on the lattice. Compare also the variational approach in path space \cite{EnFeHoRe10}. 

\subsection{Ideas of Proof}

Our arguments are based on a good understanding of the cluster representation of the
conditional probabilities of the time-evolved measure in the form presented 
in  Lemma 3.4.  All effects can be seen from here, after suitable limits, where care 
is needed for the correct treatment of infinite clusters. 

We find a number of new physical phenomena due to the spatial degrees 
of freedom of the colored point cloud which are not present in the Ising model 
where spatial degrees of freedom are fixed on the lattice. 
First of all, there is additional complexity due to spatial degrees of freedom: 
In the time-evolved Ising model the non-quasilocality manifests itself in a possibility to change the probabilities to see a fixed configuration in a box, conditional on a particular, bad configuration outside of the box, by perturbing this configuration arbitrarily far away. 
In the time-evolved WRM we also have spatial degrees of freedom, on top of the spin degrees. These are not present in the Ising model. Spatial degrees do not participate in the time-evolution. Nevertheless there is a remarkable intertwinement between spatial degrees and spins: Let us ask for probabilities to see a point cloud in a box, regardless of their colors, conditional on a particular point cloud outside, which will be perturbed arbitrarily far way. Now, even if we restrict to the class of color-perturbations for those perturbations far away, keeping all locations of the conditioning configuration fixed, we can still induce jumps in the probabilities for the locations in the box. 
In short: Pure color-perturbations act non-locally in spatial degrees of freedom, even though spatial degrees of freedom do not participate in the time-evolution. 
See the paragraph with representation \eqref{FinCrit}. 
%

The most striking features of our findings about the time-evolved WRM are the immediate loss of quasilocality and the appearance of non almost-sure quasilocality. Both do not appear for the Ising model and immediate loss has only been recently observed in a mean-field setting with unbounded spins \cite{HoReZu15} in a very particular potential.  

This is best understood on the basis of the cluster representation for conditional probabilities 
of the time-evolved measure which makes explicit the clusters $C$ of the conditional time-zero model.  
It allows to see whether transport of information coming 
from varying boundary conditions far away may (or may not) take place.  
The perfect color constraint of the 
WRM keeps a perfectly rigid coupling for the conditional time-zero measure along those clusters. 
This lossless flow of color information along clusters of overlapping discs, meaning, if we know one color in a cluster we know the color of all of them, also the ones arbitrarily far away, is responsible for the immediate loss of Gibbsianness.
There are two basic sources for discontinuities of conditional probabilities 
of the time-evolved measure: These are the color-perturbations far away, keeping cluster structure 
fixed, and: Spatial perturbations, cutting off an infinite cluster to finite pieces.  
Both mechanisms assume existence of large clusters, and their absence 
hence already implies a.s.~quasilocality. 
Color-perturbations in particular allow to show badness in the symmetric high-density regime at any finite time. 
More than that, they even allow to show badness of any (!) percolating configuration, independently 
of the coloring. The sharp reentrance time can best be understood in terms 
of availability of a switch (see Subsection~\ref{Switch}), which describes the interplay between 
Poisson activities, time, and magnetization at time $t$ on the cluster, and its weight. 
The form of the switch also explains the immediate loss of quasilocality. 

The complete proof of non-existence of an a.s.~quasilocal specification in regimes of percolation then 
also involves a version of conditional probabilities for notably finite 
clusters (see Proposition~\ref{NonGibbs_2}) and a replacement argument 
for specifications with perturbed conditionings (see Subsection~\ref{Non-asq-Gibbsianness}) 
which needs a bit more care than for discrete lattice spins. 
Our proof of existence of a quasilocal specification for $t>t_G$ in arbitrary densities, 
is constructive (see Propositions~\ref{Gibbs_Specify} and~\ref{Gibbs_Continuity}). We define a specification  
by taking the appropriate formal limit on infinite clusters (see Definition~\ref{GammaInf}) and prove specification 
properties (see Lemma~\ref{Gibbs_Spec}). 
The behavior at the critical times $t_G$ and $t=\infty$ needs modified arguments, in 
the latter case also involving an argument of cutting off infinite clusters.

\subsection{Discussion, generalizations, future research}

Summarizing, we have seen that the
spin-flip time-evolution of the WRM creates stronger pathologies than 
it was known from the Ising model on the lattice. It provides the first example of 
non-a.s.~quasilocality created by time evolution (compare however joint measures in \cite{KuLeRe04}). It also provides the first example of immediate loss of quasilocality 
in non-mean field (for a mean-field example see however the \cite{HoReZu15}).

How generic are our findings? 
It would be interesting to change the initial model at time zero 
to a more general Potts gas model, 
and see how much of the picture we found in the WRM we can expect to 
carry over, and what we 
can expect to be able to prove. 
We believe that in a finite-range model where the color constraint of the WRM is not strict, 
there should again be a regime of short-time Gibbsianness w.r.t.~$\tau$-topology. 
One could in particular discuss the one-parameter family of softened potentials of the form 
$\phi(x-y) = \b\times \one_{|x-y|<2a}$ in \eqref{Potential} 
and moreover study heating and possibly even cooling dynamics in this framework, cooling being notoriously very difficult.
Note that our present result is different, since it combines perfect quenching of the spatial degrees of freedom at zero temperature, with infinite temperature color dynamics.

From a different aspect, working with continuous interactions (as a function of the interparticle distances)
would even be nicer, as their corresponding local specifications are Feller for topologies 
which allow also for spatial variations 
of points (the vague topology on the positive measures one obtains when one 
puts a Dirac measure to every particle position) and so there is more regularity in the game. 
Indeed, the specification of the WRM clearly is non-Feller w.r.t.~the vague topology, 
and the natural topology in which to work for initial measure and also for the 
time-evolved measure hence is the $\t$-topology. 
Next, for models of unbounded range of interactions as starting measures 
there are new difficulties. 
For such models it is essential to work with spaces of tempered configurations, 
with a good definition of temperedness, and a good choice of topology. 
An analysis would have to start from a generalization of our cluster-representation 
of the time-evolved conditional probabilities, but this will be more complicated. It could be promising 
to use  continuum percolation  
tools of  \cite{GeHa96} in their proof of phase-transitions of general Potts gases in this context, and 
many interesting challenges and open issues remain. 
Finally it would be interesting and non-immediate, to investigate also a long-range model in $d=1$ with phase transitions under transformations, compare \cite{EnNy16} on the lattice. 

\subsection{Acknowledgment}
This research was supported by the Leibniz program Probabilistic Methods for Mobile Ad-Hoc Networks. The authors thank C.~Hirsch for interesting discussions and comments.

\subsection{Organisation of the manuscript}
In Section~\ref{GibbsGen} we present the general framework for Gibbs point processes in Euclidean space and give the definition for Gibbsianness based on the existence of quasilocal specifications. In Section~\ref{WRMSet} the WRM under independent spin flip is introduced and we give our main results for quasilocal Gibbsianness in time vs.~intensity regimes. Section~\ref{Strategy} is dedicated to cluster representations of the time-evolved WRM for which we present the properties required for the proofs of the main theorems. All technical proofs are dealt with in Section~\ref{MainProofs}. In the appendix in Section~\ref{Ap} we collect some general results on percolation for the WRM.

\section{Gibbs Point Processes}\label{GibbsGen}
We consider the Euclidean space $\R^d$ with $d\ge1$ and fix an integer $q\ge 1$. The set $E_q=\{1,\dots,q\}$ will play the role of a local state space or in the language of point processes the mark space. Let $\O$ denote the set of all \textit{locally finite subsets} of $\R^d$, that is, for $\o\in\O$ we have $|\o_\L|=\#\{\o\cap\L\}<\infty$ for all bounded sets $\L\subset\R^d$. A configuration of particles with $q$ different colors is called a \textit{colored configurations} and is given by the vector $\oo=(\o^{(1)},\dots,\o^{(q)})$ where $\o^{(i)}\in\O$ for all $i\in E_q$ and $\o^{(i)}\cap\o^{(j)}=\emptyset$ for all $i\neq j$. We denote $\OO$ the set of all colored configurations. Let us equip $\O$ with the $\s$-algebra $\F$ which is generated by the counting variables $\O\ni \o\mapsto\#(\o\cap\L)$ for bounded and measurable $\L\Subset \R^d$ and $\OO$ with the restriction of the product $\s$-algebra on $\O^{q}$ which we denote $\FF$.
Further we denote by $\OO_\L$ the set of all colored configurations in the measurable set $\L\subset\R^d$ and equip it with the corresponding $\s$-algebra $\FF_\L$ generated by the counting variables. We write $f\in\FF_\L$ if $f$ is measurable w.r.t.~$\FF_\L$ and $f\in\FF^b_\L$ if $f$ is additionally bounded in the supremum norm $\Vert\cdot\Vert$. We denote by $\o=\o^{(1)}\cup\cdots\cup\o^{(q)}$ the \textit{grey configuration} of the colored configuration $\oo$. By $\s_x\in E_q$ we denote the color of the particle $x\in \o$.

\medskip
An interaction between particles in $\OO_\L$ with $\L\Subset\R^d$ and boundary condition $\oo_{\L^{\rm c}}\in\OO_{\L^{\rm c}}$, where $\L^{\rm c}=\R^d\sm\L$, is given by the \textit{Hamiltonian}
$$H_\L(\oo_\L\oo_{\L^{\rm c}})=\sum_{\eta\Subset\o_\L\o_{\L^{\rm c}}:\, \eta\cap\L\neq\emptyset}\Phi_{\eta}(\oo_\L\oo_{\L^{\rm c}}).$$
where the family of \textit{potentials} $\Phi_{\eta}$ are measurable functions with values in $\R^d\cup\{\infty\}$, whenever this maybe infinite sum is well defined. 

\medskip
As an example consider the Potts Gas (PG) as presented in \cite{GeHa96} where $q\ge 2$ and the potential is given by 
\begin{equation}\label{Potential}
\Phi_{\eta}(\oo)=\d_{\eta=\{x,y\}}[\d_{\s_x\neq\s_y}\phi(x-y)+\psi(x-y)]
\end{equation}
for some measurable and even functions $\phi,\psi:\,\R^d\to]-\infty,\infty]$. More precisely, $\phi$ is assumed to be positive and finite range and $\psi$ is strongly stable, lower regular and without long-range repulsion, for details see \cite{GeHa96,Ru70}.
 A special case of the PG for $q=2$ is the Widom-Rowlinson model (WRM) with $E=\{-,+\}$, as presented  for example in \cite{ChChKo95,WiRo70}, where $\phi(x-y)=\infty\times \one_{|x-y|< 2a}$ for some parameter $a>0$ is a hard-core repulsion and $\psi=0$. 
The WRM is of finite range with parameter $a$ and satisfies the above mentioned regularity conditions, see \cite{ChChKo95}. 

\medskip
The associated \textit{Gibbsian specification} is given by
$$\g_\L(d\oo_\L|\oo_{\L^{\rm c}})=\exp(-H_\L(\oo_\L\oo_{\L^{\rm c}}))Z^{-1}_\L(\oo_{\L^{\rm c}})\PP_\L(d\oo_\L)$$
where $Z_\L(\oo_{\L^{\rm c}})=\int\exp(-H_\L(\tilde\oo_\L\oo_{\L^{\rm c}}))\PP_\L(d\tilde\oo_\L)$ is called the partition function whenever it is well-defined. $\PP_\L=P_{\L}^{\l_1}\otimes\cdots\otimes P_{\L}^{\l_q}$ here denotes the $q$-dimensional Poisson point process (PPP) on $\OO_\L$ with constant intensities $\l_1,\dots,\l_q>0$. That is the measure such that
 $$\int d P_\L^\l f=e^{-\l|\L|}\sum_{n=0}^\infty\frac{\l^n}{n!}\int_{\L^n}d x_1\cdots d x_nf(\{x_1,\dots,x_n\})$$
for any bounded and measurable function $f$ on $\O_\L$. 
In general, a family of proper probability kernels $\g=(\g_\L)_{\L\Subset\R^d}$ is called a specification if the following consistency condition is satisfied. For all $\tilde\oo\in\OO$ and measurable $\L\subset\D\Subset\R^d$
$$\g_\D(\g_\L(d \oo|\cdot)|\tilde\oo)=\g_\D(d \oo|\tilde\oo).$$

\medskip
In the most general form (see \cite{DeDrGe12}), the set of boundary conditions such that the Hamiltonian $H_\L$ and $Z_\L$ are well defined can be characterized as follows. Let $\Phi^-=(-\Phi)\vee 0$, then a configuration $\oo\in\OO$ is called \textit{admissible} for a region $\L\subset\R^d$, in symbols $\oo\in\OO^*_\L$, iff 
$$H^-_\L(\zetazeta_\L\oo_{\L^{\rm c}})=\sum_{\eta\Subset\zeta_\L\o_{\L^{\rm c}}:\, \eta\cap\L\neq\emptyset}\Phi^-_{\eta}(\zetazeta_\L\oo_{\L^{\rm c}})<\infty$$
for $\PP_\L$ almost all $\zetazeta_\L\in\OO_\L$ and $0<Z_\L(\oo_{\L^{\rm c}})<\infty$. In particular, the associated Gibbsian kernels $\g_\L$ are well defined on $\OO^*_\L$.
%

\medskip
Next we give a definition of Gibbs point processes via the DLR equation similar to the one for classical Gibbs measures on deterministic spatial graphs see \cite{Ge11}.
\begin{defn}[Gibbs point processes]
A random field $\P$ is called a Gibbs point process for the specification $\g$ iff for every $\L\Subset\R^d$ and for any $f\in\FF^b$, 
\begin{equation}\label{DLR}
\int f(\oo)\P(d\oo) = \int f(\tilde\oo_\L\oo_{\L^{\rm c}}) \g_\L(d\tilde\oo_\L|\oo_{\L^{\rm c}}) \P(d\oo)
\end{equation} 
and $\P(\OO^*_\L)=1$. We denote the set of all such measures $\GG(\g)$.
\end{defn}
For example, for the PG with potential \eqref{Potential}, 
existence of Gibbs measures is proved in \cite{GeHa96} where admissibility can be replaced by the notion of temperedness, which is defined without reference to the potential or the volume. 
Moreover, we note that in the high-intensity regime, 
phase transitions of multiple Gibbs measures can be observed for the PG.

\subsection{Gibbsianness for point processes}\label{Gibbsianness for point processes}
In this section we introduce the notion of Gibbsianness for general random fields $\P$ as the existence of a quasilocal specification for $\P$. Similar notions for Gibbsianness in lattice, tree and mean-field situations have been proposed and used to study various statistical mechanics models under transformations, see for example \cite{KuRe06}. 
The criterion for Gibbsianness of continuum random fields presented here is based on the existence of a version of the finite-volume conditional probabilities which constitutes a specification. The additional, and very important, condition is then that the specification is continuous w.r.t.~boundary conditions under the $\tau$-topology where $\oo'\Rightarrow\oo,$ iff $f(\oo')\to f(\oo)$ for all $f\in\bigcup_{\L\Subset\R^d}\FF^b_\L$.

\medskip
Let $B_r(x)$ denote the ball with radius $r>0$ centered at $x\in\R^d$.
We start by labeling points of continuity for a specification and use this to define quasilocality.
\begin{defn} 
Let $\g$ be a specification. A configuration $\oo\in\OO$ is called {\bf good} for $\g$ iff for any $x\in\R^d$ and $0<r<\infty$ and any observable $f\in\FF^b_{B_r(x)}$ we have 
\begin{equation*} 
\bigl| \g_{B_r(x)}(f |\oo'_{B_r(x)^{\rm c}})-\g_{B_r(x)}(f |\oo_{B_r(x)^{\rm c}})\bigr| \to0
\end{equation*}
as $\oo'\Rightarrow\oo$. We denote $\OO(\g)$ the set of good configurations. Elements of $\OO\sm \OO(\g)$ are called {\bf bad} for $\g$ and $\g$ is called {\bf quasilocal} if $\OO(\g)=\OO$.
\end{defn}
For example, for the Gibbsian specification of the WRM, any $\oo\in\OO$ is good since the interaction is of finite range. Even stronger, the WRM is $2a$-Markov in the sense, that $\g_{B_r(x)}(d\oo |\cdot)$ is measurable w.r.t.~$\FF_{B_{r+2a}(x)}$. 

\medskip
It is a subtlety of the theory of Gibbs point processes, that Gibbsian specifications are not always well-defined for all boundary conditions. Even confined to the set of locally finite configurations, the possibility to accumulate arbitrarily many points in finite volumes can lead to blowups in the Hamiltonian if it is of infinite range. This necessitates notions of admissibility or temperedness in the design of the theory which guarantee that the set of configurations where the Gibbsian specification is well-defined, has full mass.
In particular, Gibbsian specifications which are not everywhere well defined, can not be quasilocal. Even more dramatically, in the setting of the $\tau$-topology even at a boundary condition $\oo$ where the Gibbsian specification is well-defined one can exhibit a sequence $\oo_n$ of boundary conditions $\oo_n\Rightarrow\oo$ along which the Gibbsian specification is not well-defined. To be more specific, 
for example for the Gibbsian specification of the PG with infinite range $\psi$, any $\oo\in\OO$ would be bad w.r.t.~the $\t$-topology. This can be seen as follows. Away from a large but finite volume, any element of a convergent sequence of configurations can have arbitrarily many more points then $\oo$. Adapting the number of additional points to the, maybe small but non-zero, contribution of $\psi$ leads to the discontinuity. 

\medskip
In the lattice setting with bounded spin space, Gibbsianness for a random field $\P$ is defined by the existence of a positive quasilocal specification for $\P$, see \cite{EnFeHoRe02}. As presented in the previous paragraph, this definition can not be directly transferred to the continuum setting since it would, for example, label the infinite-range PG to be non-Gibbs. However, for a random field $\P$ to possess a quasilocal specification or a specification which is quasilocal away from a set of boundary conditions with zero mass under $\P$ is a way to measure the internal locality structures of $\P$.
Let us also mention that, in analogy to the lattice setting with unbounded spins as presented in \cite{KuRe06}, a weaker notion of goodness could be employed where the perturbing boundary conditions must satisfy a density restriction. As will become clear from the proofs, in case of the WRM under independent spin flip, non-quasilocality could be established even under an analogous weakend notion.

\begin{defn} We call a random field $\P$ {\bf quasilocally Gibbs}  iff there exists a quasilocal specification $\g$ for $\P$, otherwise we call it {\bf non-quasilocally Gibbs}. We call $\P$ {\bf almost-surely quasilocally Gibbs} iff there exists a specification $\g$ for $\P$ such that $\P(\OO(\g))=1$, otherwise we call it {\bf non-almost-surely quasilocally Gibbs}.
\end{defn}
Let us abbreviate quasilocally Gibbsianness with q-Gibbsianness and almost-surely quasilocally Gibbsianness  with asq-Gibbsianness.
The prime example of random fields $\P$ for which we study Gibbs-non-Gibbs transitions are Gibbs measures under transformations. In the following section, we investigate the WRM under independent spin-flip dynamics and show that it exhibits all the above Gibbsianness properties in certain intensity vs.~time regimes.

\section{The Widom-Rowlinson model under independent spin-flip dynamics}\label{WRMSet}
Let us start by introducing the WRM model on $\R^d$ with $d\ge2$ and two-dimensional local state space $E=\{-,+\}$. Recall that we write solid $\oo$ for the grey configuration $\o$ colored according to ${\s_{\o}}$, that is, $\oo=\o^{\s_{\o}}$. For the WRM the Gibbsian specification is given by $\g=(\g_\L)_{\L\Subset\R^d}$ with 
$$\g_\L(d\oo_\L|\oo_{\L^{\rm c}})=\PP_\L(d\oo_\L)\chi(\oo_\L\oo_{\L^{\rm c}})Z^{-1}_\L(\oo_{\L^{\rm c}}).$$
Here $\chi$ is either one or zero, depending one whether 
the interspecies distance is bigger or equal than $2a$ for all particles or not. The two-dimensional homogenous PPP $\PP$ has intensities $\l_+$ for plus colors and $\l_-$ for minus colors. The usual normalization constant is denoted by $Z_\L$. This specification $\g$ is strictly local since it only depends on the boundary condition up to distance $2a$. 
We may also write this measure on colored particle configurations inside 
$\L$ in terms of a two-step procedure by first choosing 
the particles positions according to a non-colored PPP $P$ with 
activity $\l_++\l_-$ and afterwards summing over all possible colorings 
taking into account the compatibility constraints on colors, compare 
\cite[Formula 2.1 and 2.2]{ChChKo95}. More precisely, 
$$\g_\L(d\oo_\L|\oo_{\L^{\rm c}})=P_\L(d\o_\L) U(d\s_{\o_{\L}})
\chi(\oo_\L\oo_{\L^{\rm c}})Z^{-1}_\L(\oo_{\L^{\rm c}})$$
where $U$ is the Bernoulli measure on the color-space $E$, independent 
over the points, which has the probability to see color $+$ given 
by $\l_+/(\l_++\l_-)$. 

\medskip
Note that for $d\ge2$ the WRM exhibits a phase-transition in the symmetric high-intensity regime, see \cite{ChChKo95,Ru71}. More precisely, using the FKG inequality, existence of the limits 
$$\lim_{\L\uparrow\Z^d}\g_\L(d\oo_\L|\pm_{\L^{\rm c}})=\mu^\pm(d\oo)$$
can be established in all parameter regimes, where $\pm_{\L^{\rm c}}$ denotes the all plus, respectively all minus boundary condition, see \cite[Proposition 2.3.]{ChChKo95} for the symmetric case. The limiting extremal Gibbs measures $\mu^\pm\in\GG(\g)$ are invariant under translation and rotation and unequal for sufficiently high intensity. In \cite[Corollary]{ChChKo95} it is shown that existence of percolation in the Random cluster model (sometimes also called the  Fortuin-Kasteleyn representation), is a necessary and sufficient condition for symmetry breaking with $\mu^+\neq\mu^-$. 

\medskip
From now on we call $\l_+=\l_-$ the {\it symmetric regime} and $\l_+>\l_-$ the {\it asymmetric regime}. 
Let us note that absence of phase-transition for all intensities away from the symmetric high-intensity regime is widely believed to be true but to our knowledge a complete proof is still missing. 
At low intensities, with possibly different activities, uniqueness can be proved on the lattice in any dimension by cluster expansions. The corresponding result in the continuous setting is standard. Surprisingly, even in the two-dimensional lattice analogue of the WRM, absence of phase-transition in the asymmetric regime is not  proven in all parameter regimes, see however \cite{HiTa04}. %

\medskip
We always start at time zero in some $\mu\in\GG(\g)$ and apply a rate one Poisson spin-flip dynamics 
$$p_t(\s_x,\hat\s_x)=\tfrac{1}{2}(1+e^{-2t})\one_{\s_x=\hat\s_x}+\tfrac{1}{2}(1-e^{-2t})\one_{\s_x\neq\hat\s_x}$$
independent over the sites. We investigate the time-evolved measure $\mu_t=p_t\mu$. 
In the following subsection, we formulate our main results about Gibbsianness of the time evolved WRM.

\subsection{Main results}
Let us denote by $\GG(\g^{\rm{sym}})$ the set of Gibbs measures for the symmetric WRM.
Moreover we denote by $\mu^+$ is the plus-extremal Gibbs measure. 
Further, we will refer to the intensity-dependent \textit{critical time} which is given by 
$$t_G=\frac{1}{2}\log\frac{\l_++\l_-}{\l_+-\l_-}$$
for $\l_+>\l_-$.
Let us start with our result for the q-Gibbsian regime. For this first observe that for $\l_+>\l_-$ and $t_G<t\le\infty$ we have 
$$\frac{1}{R}=\frac{1}{2a}[\log\frac{\l_+}{\l_-}-\log\frac{1+e^{-2t}}{1-e^{-2t}}]>0.$$
Let $d(\D,\L)=\inf_{x\in \D,y\in \L}|x-y|$ denote the set distance between sets $\D, \L\subset\R^d$.
\begin{thm}\label{Gibbs}
In the asymmetric model let $t_G<t\le\infty$. Then $\mu^+_t$ is q-Gibbs for a specification $\hat\g$ with the following exponential decorrelation property. For any $0<r<\infty$, there exists a finite constant $A=A(\l_+,\l_-,r)$ such that for all $x\in\R^d$, $\hat\oo\in\OO$ and observables $f\in\FF^b_{B_r(x)}$,
\begin{equation*} 
\sup_{\oo^1,\oo^2\in\, \OO} 
\bigl|\hat \g_{B_r(x)}(f |\hat\oo_{\L\sm B_r(x)}\oo_{\L^{\rm c}}^1)-\hat\g_{B_r(x)}(f |\hat\oo_{\L\sm B_r(x)}\oo_{\L^{\rm c}}^2)\bigr| \leq A\Vert f \Vert e^{- d(B_r(x),\L^{\rm c})/R}.
\end{equation*}
\end{thm}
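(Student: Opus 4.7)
The plan is to construct the specification $\hat\g$ explicitly using the cluster representation of the time-evolved conditional probabilities (Lemma~3.4 in the text). In that representation, the finite-volume kernel factors over the connected components of the overlapping-disc graph built from all particles, and each cluster $C$ contributes a \emph{switch factor} that depends on the activities $\l_\pm$, the elapsed time $t$, and the observed time-$t$ spin pattern on $C$. For a cluster of $n$ particles this factor is governed by a competition between a $\l_+^n$ term and a $\l_-^n\bigl((1+e^{-2t})/(1-e^{-2t})\bigr)^n$ term, and the critical balance is exactly $t>t_G$, with exponential dominance at rate $1/R$ per unit cluster length.

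The first step is to define $\hat\g_{B_r(x)}(\cdot\mid\hat\oo_{B_r(x)^{\rm c}})$ by taking the finite-cluster formula but replacing each cluster that intersects $B_r(x)$ and extends to infinity by the formal infinite-cluster switch $\Gamma_\infty$, i.e.\ its limit as the cluster grows. For $t>t_G$ the gap $1/R>0$ makes this limit exist uniformly in the boundary condition and continuous in the $\t$-topology. Consistency of the family $\hat\g=(\hat\g_\L)$ follows from the nestedness of the cluster decomposition together with the multiplicative factorization of switch factors over disjoint clusters, and the DLR equation for $\mu^+_t$ is verified by approximating with $+$-boundary finite-volume kernels and using that $\mu^+$-a.s.\ every infinite cluster is monochromatically $+$-saturated, so the $\Gamma_\infty$-rule agrees with the true time-zero coloring.

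For the exponential bound, fix $\oo^1,\oo^2\in\OO$ and a common inner layer $\hat\oo_{\L\sm B_r(x)}$. In the cluster representation the switch factors for clusters fully contained in $\L\sm B_r(x)$ coincide under the two conditionings and cancel, so the only possible source of discrepancy is a cluster spanning from $B_r(x)$ through $\L\sm B_r(x)$ into $\L^{\rm c}$. Such a cluster must contain a chain of overlapping $2a$-discs of length at least $d(B_r(x),\L^{\rm c})$; along a chain of $n=d(B_r(x),\L^{\rm c})/(2a)$ links the switch-factor perturbation is bounded by $\bigl((\l_-/\l_+)(1+e^{-2t})/(1-e^{-2t})\bigr)^n=e^{-d(B_r(x),\L^{\rm c})/R}$. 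A Peierls-type enumeration of such chains, controlled by exponential moments of the underlying PPP and the volume of $B_r(x)$ as starting region, produces the prefactor $A(\l_+,\l_-,r)$ and completes the estimate.

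The main obstacle is the rigorous construction of $\Gamma_\infty$: one must show that the infinite-cluster switch limit exists and defines a proper kernel on a set of full $\mu^+_t$-measure, and that it varies continuously in the $\t$-topology even though the underlying spatial graph can change abruptly under $\t$-convergence of boundary conditions. This reduces to precisely the same exponential tail estimate on cluster magnetizations that drives the decay bound, so both parts of the theorem stand or fall with the inequality $1/R>0$. The endpoint $t=\infty$ requires a small modification since the dynamics factor becomes trivial; there $\mu^+_\infty$ is an independent coloring of the grey measure and the cluster argument reduces to a purely geometric Peierls estimate on infinite spatial clusters.
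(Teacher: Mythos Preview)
Your outline matches the paper's argument closely: the specification is exactly $\g^\infty$ of Definition~\ref{GammaInf}, consistency is checked by direct computation (Lemma~\ref{Gibbs_Spec}), the DLR equation for $\mu^+_t$ is obtained via $+$-boundary finite-volume approximation (Proposition~\ref{Gibbs_Specify}), and the exponential bound is Proposition~\ref{Gibbs_Continuity}, all driven by the inequality $g_-=2a/R>0$ which forces the switch $\r(\hat\oo_{C\setminus B})\le e^{-g_-|C\setminus B|}\le e^{-d(B,\L^{\rm c})/R}$ on any cluster reaching $\L^{\rm c}$.

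One simplification you miss: no Peierls-type enumeration of chains and no PPP exponential-moment control is needed for the prefactor. The number of clusters of $\o_B\o_{B^{\rm c}}$ that touch $B_r(x)$ is bounded \emph{deterministically} by a constant $K=K(r,a)$ (choose one representative point per cluster inside $B_{r+2a}(x)$; representatives from distinct clusters are at mutual distance $\ge 2a$, so a packing argument gives $K\le |B_{r+3a}|/|B_a|$). Together with the trivial void-probability lower bound $e^{-\l_+|B|}$ on the partition function, this yields $A$ directly as a constant of the form $e^{c\l_+|B|}2^K$. Your remark on $t=\infty$ is also slightly off: in the asymmetric case $g(m)\equiv\log(\l_+/\l_-)>0$ at $t=\infty$, so the identical cluster argument runs without modification and no separate geometric Peierls estimate is needed.
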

%
Let us note that the critical time depends only on the fraction of the intensities $\l_+/\l_-$,  and not on the hardcore radius $a>0$, as it is determined in terms of a \emph{balance} between two kinds of "magnetic fields". These magnetic fields are simple single-spin properties and do not depend on geometry. 
The first magnetic field describes the asymmetry to draw a plus or minus in the initial WRM, conditional on having a point, 
and this depends only on $\l_+/\l_-$. 
The second magnetic fields describes the predictive power of backwards conditioning in the single-site stochastic kernels. 
That this is enough follows from the cluster representation given below, and the analysis of the paper. 
However the value of $a>0$ \emph{does} play a very important r\^ole in determining the \emph{typicality} of bad configurations.

\medskip
Next we present our results on asq-Gibbsian regimes. For this we have to collect some information about the support of the transformed measure. 
Since the time evolution only changes the colors of configurations, all questions concerning grey configurations under the transformed measure can be answered w.r.t.~the WRM. Our main concern will be about the existence of infinite clusters in the WRM. A connected component or \textit{cluster} $C$ of points in a grey configuration $\o$ is a subset $C\subset\o$ where for every $x,y\in C$ there exists a finite set of points $\{x_1,\dots,x_n\}\subset C$ such that with $x_{n+1}=y$ and $x_0=x$ we have $|x_i-x_{i-1}|<2a$ for all $i\in\{1,\dots, n+1\}$. For any $\L\Subset\R^d$ denote
$$\{\L\leftrightarrow\infty\}=\{\oo\in\OO:\, \o\text{ has an infinite cluster }C\text{ with }C\cap\L\neq\es\}.$$
We will call the parameter regime where $\mu(\{B_r(x)\leftrightarrow\infty\})>0$ for some $x\in\R^d$ and $r>0$ the {\it high-intensity regime} and the parameter regime where $\mu(\{B_r(x)\leftrightarrow\infty\})=0$ for all $x\in\R^d$ and $r>0$
the {\it low-intensity regime} of the WRM . We provide proofs of existence of nontrivial high- and low-intensity regimes 
in the appendix in Section~\ref{Ap}.


%
%

\medskip
Discontinuity for small times is based on the existence of infinite clusters. The next results shows that, for low intensities, almost-surely there are no such discontinuities which implies asq-Gibbsianness. 
\begin{thm}\label{AlmostGibbs}
Consider the symmetric model and let $\mu\in\GG(\g^{\rm{sym}})$ be any starting Gibbs measure. In the low-intensity regime the time-evolved measure $\mu_t$ is asq-Gibbs but non-q-Gibbs for all $0<t\le\infty$. For the asymmetric model $\mu^+_t$ is asq-Gibbs but non-q-Gibbs for all $0<t\le t_G$ in the low-intensity regime.
\end{thm}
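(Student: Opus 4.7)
The plan is to split the statement into its two halves and handle each via the cluster representation of conditional probabilities announced in Lemma~3.4. Both the symmetric and the asymmetric-$t\le t_G$ cases are treated identically; the low-intensity hypothesis is what powers both directions.

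For the asq-Gibbs direction, note that in the low-intensity regime $\mu(\{B_r(x)\leftrightarrow\infty\})=0$ for every ball, and because independent spin-flip only relabels existing points, the spatial marginal of $\mu_t$ coincides with that of $\mu$, so the same percolation-null property holds for $\mu_t$. I would build a candidate specification $\hat\gamma$ for $\mu_t$ from the cluster representation: on a conditioning $\oo$ whose clusters inside a neighborhood of $\L$ are all finite, set $\hat\gamma_\L(\cdot|\oo)$ to be the explicit finite-cluster formula; extend it to the null set of percolating conditionings by a formal infinite-cluster prescription. On the full-measure event $\{B_r(x)\not\leftrightarrow\infty\}$ the kernel depends only on finitely many points within the enlarged region $B_{r+2a}(x)$, and this dependence is $\tau$-continuous because finite clusters are local events that stabilize under $\tau$-convergence. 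Thus every non-percolating configuration is a good point and $\mu_t(\OO(\hat\gamma))=1$.

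For the failure of everywhere-quasilocality I must exhibit, for every specification $\hat\gamma$ of $\mu_t$, at least one bad configuration. Although the set $\{B_r(x)\leftrightarrow\infty\}$ is $\mu_t$-null, it is nonempty in the support of $\mu_t$: one realizes by a local-density argument a configuration $\oo$ with an infinite cluster $C$ threading $\L = B_r(x)$. I approximate $\oo$ in the $\tau$-topology by $\oo^n$ obtained from $\oo$ by color-flips restricted to the portion of $C$ in a distant annulus. By the cluster representation, the probability of any local event in $\L$ under $\hat\gamma_\L(\cdot|\oo^n_{\L^{\rm c}})$ depends on the net colored magnetization of $C\cap\L^{\rm c}$ through the single-cluster switch described in Subsection~1.3; choosing the color-flips appropriately drives the magnetization to two distinct limits along two subsequences, producing a jump in the conditional probability. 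A replacement argument, arranging the $\oo^n$ inside a set of full $\mu_t$-measure on which $\hat\gamma$ is pinned down by the cluster representation, transfers this discontinuity from the canonical specification to any $\hat\gamma$, so $\oo$ is bad.

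The main obstacle is this last replacement step. A specification is arbitrary on $\mu_t$-null conditionings, so discontinuity cannot be read off directly from the explicit formula applied to our constructed percolating $\oo$. The argument must build the approximating sequence inside a good set of conditionings where every specification for $\mu_t$ is forced to agree with the cluster formula up to null modifications, while simultaneously realizing the required color-flips and preserving spatial positions for $\tau$-convergence. Making these compatible---essentially the non-a.s.-quasilocality step alluded to in Subsection~1.3---is delicate and will require the cluster representation for \emph{finite} clusters (the finite-cluster counterpart of Lemma~3.4) in addition to the infinite-cluster formula that drives the switch.
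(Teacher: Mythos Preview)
Your asq-Gibbs argument matches the paper's (Proposition~\ref{AlmostGibbs_1} and Lemma~\ref{Almost_Gibbs_Continuity}), modulo one misstatement: on a non-percolating $\hat\oo$ the kernel $\gamma^\infty_B$ depends on the full finite clusters reaching $B$, not merely on $B_{r+2a}(x)$; the locality radius is configuration-dependent and unbounded, though a.s.\ finite.

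For non-q-Gibbs there are two gaps. First, the replacement device you are groping for is a double randomization: the paper fixes the deterministic line $\eta=\{(na/2,0,\ldots,0):n\ge0\}$, randomizes its point positions over a Lebesgue $\varepsilon$-neighbor\-hood $V_\varepsilon(\eta)$, and introduces the decoupling density $g^n_\varepsilon[\xi](\oo)\propto\one_{\emptyset_{\bar B_n\setminus B_n}}(\oo)\,\one_{V_\varepsilon(\xi_{B_n})}(\oo)$. Under the $\mu$-integral the DLR equation cancels this density on one side, while the empty-annulus event forces every specification $\tilde\gamma$ to agree with $\gamma^{\rm f}$; Proposition~\ref{NonGibbs} then supplies a uniform lower bound, and Lebesgue's differentiation theorem converts the averaged inequality back into a pointwise bad configuration $\xi\in V_\varepsilon(\eta)$. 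Your sketch of ``arranging the $\oo^n$ inside a set of full $\mu_t$-measure'' does not yet contain this mechanism. Second, your color-flip perturbation fails outright at the endpoints the theorem covers. In the symmetric model at $t=\infty$ one has $g(m)\equiv 0$, so $\rho\equiv 1$ and recoloring has \emph{no} effect on the cluster kernel; the paper instead perturbs \emph{spatially}, using a two-arm configuration $\bar\eta$ (rays from $B$ along two coordinate axes) together with a distant circular bridge $\zeta_n$ whose presence or absence changes the number $|\CC^{\rm f}_B|$ of clusters touching $B$ and hence the combinatorial factor $2^{|\CC^{\rm f}_B|}$. At $t=t_G$ in the asymmetric model the switch activates only at magnetization exactly $-1$, which again requires a tailored (all-minus versus half-minus/half-plus) construction rather than a generic color flip.
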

Note that the critical time is included in the above result. Further note that in the asq-Gibbsian regimes, $\mu_t$ is not Markov in the sense that it depends on the boundary condition only in a finite vicinity. The immediate loss of continuity of any specification, which can usually not be observed in models with fixed geometry, see \cite{KuOp08}, leading to non-asq-Gibbsianness, is mainly an effect of the hard-core interaction.
Note however, that there are very particular examples of mean-field models, see \cite{HoReZu15}, which show immediate loss of Gibbsianness. 
We do expect short-time preservation of Gibbsianness to be present for instance in models with $\phi(x-y)=V_0\times \one_{|x-y|< 2a}$ with $V_0>0$ large but finite.

\medskip
For the high-intensity regime and times strictly smaller then the critical one, we show non-asq-Gibbsianness.
\begin{thm}\label{AlmostGibbs_2}
Consider the symmetric model and let $\mu\in\GG(\g^{\rm{sym}})$ be any starting Gibbs measure. Then $\mu_t$ is non-asq-Gibbs for all $0<t< \infty$ in the high-intensity regime. For the asymmetric model $\mu^+_t$ is non-asq-Gibbs for all $0<t< t_G$ in the high-intensity regime. In both cases if $\mu_t\in\GG(\hat\g)$ for some specification $\hat\g$, then $\mu_t(\OO(\hat\g))=0$.
\end{thm}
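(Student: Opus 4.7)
The argument rests on the cluster representation of Lemma~3.4, which expresses the finite-volume conditional kernel of $\mu_t$ as a weighted sum over the clusters of overlapping discs in the combined inner–boundary grey configuration. Each cluster $C$ with time-$t$ coloring $\s_C$ contributes a single ``color switch'' factor proportional to $\l_+\prod_{y\in C}p_t(+,\s_y)+\l_-\prod_{y\in C}p_t(-,\s_y)$, the ratio of whose two branches can be made arbitrarily close to $0$ or to $\infty$ by polarizing $\s_C$, provided $t<\infty$ in the symmetric case and $t<t_G$ in the asymmetric case; at $t=t_G$ the two branches balance exactly on maximally polarized colorings, which is the sharpness mechanism in Theorem~\ref{Gibbs}. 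Throughout, one uses that in the high-intensity regime $\mu_t$-a.s.\ there is an infinite grey cluster meeting any given ball $B_r(x)$, which follows from the translation invariance of $\mu_t$ and the ergodicity of the starting measure (extremal in $\GG(\g^{\rm sym})$, respectively the FKG-extremal $\mu^+$).

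To prove $\mu_t(\OO(\hat\g))=0$ for an arbitrary specification $\hat\g$ with $\mu_t\in\GG(\hat\g)$, I would fix $\L=B_r(x)$ and observe that $\hat\g_\L$ agrees with the canonical cluster-representation kernel $\G_\L$ on a set of boundary conditions $N^{\rm c}$ with $\mu_t(N)=0$. The goal is to show that, for $\mu_t$-a.e.\ $\oo$, there exist sequences $\oo^{(1)}_n,\oo^{(2)}_n\Rightarrow\oo$ in the $\t$-topology, with $\oo^{(i)}_{n,\L^{\rm c}}\in N^{\rm c}$, and a test function $f\in\FF^b_\L$ detecting the color of particles in $\L$, such that $\G_\L(f\,|\,\oo^{(i)}_{n,\L^{\rm c}})$ has distinct limits as $n\to\infty$ for $i=1,2$. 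Since $\hat\g_\L$ coincides with $\G_\L$ at each of the perturbing configurations, this forces $\hat\g_\L$ to be discontinuous at $\oo_{\L^{\rm c}}$, independently of the value $\hat\g_\L(f\,|\,\oo_{\L^{\rm c}})$.

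The construction of the two approximating sequences uses Proposition~\ref{NonGibbs_2} on finite-cluster conditional probabilities. Keep $\oo$ frozen inside the increasing box $B_n(0)\supset\L$, truncate the infinite cluster through $\L$ at its first exit from $B_n(0)$, and glue in its place a long finite tail of overlapping discs whose colors are uniformly $+$ in $\oo^{(1)}_n$ and uniformly $-$ in $\oo^{(2)}_n$; the remaining material of $\oo_{B_n(0)^{\rm c}}$ is left essentially untouched. The switch factor for the now-finite cluster containing $\L$ becomes strongly polarized toward $+$ along the first sequence and toward $-$ along the second, with strength growing geometrically in the length of the added tail. A small spatial thinning of that tail (the replacement argument of Subsection~\ref{Non-asq-Gibbsianness}) then places $\oo^{(i)}_{n,\L^{\rm c}}$ inside $N^{\rm c}$ without spoiling the dominant term of the switch, using that $\G_\L$ is continuous in the positions of the added discs. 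Since the event ``an infinite grey cluster meets $\L$'' is translation invariant and has positive probability, it has $\mu_t$-probability one by tail-triviality, so the construction is available $\mu_t$-a.s., yielding $\mu_t(\OO(\hat\g))=0$.

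The principal obstacle is the third step above: the $\t$-topology forces agreement with $\oo$ on all of $B_n(0)$, so the polarizing tail must sit \emph{outside} $B_n(0)$, and its switch weight must dominate that of the frozen piece of the infinite cluster inside $B_n(0)$, whose colors are dictated by $\oo$ and are in general mixed. This is precisely what Proposition~\ref{NonGibbs_2} is designed to deliver, and it is where the strict inequality $t<t_G$ (respectively $t<\infty$) enters: in that regime each additional polarized disc in the tail multiplies the switch by a factor bounded away from $1$, so a sufficiently long tail swamps any bounded interior contribution and drives the two conditional probabilities in $\L$ to opposite limits, uniformly in $n$.
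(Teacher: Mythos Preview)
Your overall intuition matches the paper's: one polarizes the colors on a long piece of the infinite cluster far from $\L$ and uses the switch factor to force the conditional probability in $\L$ to two different values. However, two steps in your plan are genuine gaps.

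\textbf{The full-measure step is wrong as stated.} The event $\{B_r(x)\leftrightarrow\infty\}$ is neither translation invariant nor a tail event: it depends on what happens near the fixed ball $B_r(x)$. Tail-triviality or ergodicity cannot upgrade it from positive probability to probability one. The paper obtains full measure of bad points by a different mechanism: for a \emph{fixed} ball $B$ the argument shows only that the set of discontinuity points of any specification has $\mu_t$-mass at least $\mu(\{B\leftrightarrow\infty\})$; one then lets the ball $B$ grow (the paper writes $\D\uparrow\R^d$) and uses $\lim_{\D\uparrow\R^d}\mu(\{\D\leftrightarrow\infty\})=1$ to conclude that the bad set has full measure. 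Without this enlarging-ball step your conclusion $\mu_t(\OO(\hat\g))=0$ does not follow.

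\textbf{The null-set avoidance is not justified.} You correctly identify the obstacle: any specification $\hat\g$ agrees with the canonical kernel only off a $\mu_t$-null set $N$, and the perturbing sequences must land in $N^{\rm c}$. But ``a small spatial thinning of the tail'' does not obviously achieve this. The tail is attached outside $B_n$, so you need that for $\mu_t$-a.e.\ frozen interior $\oo_{B_n}$ there is a positive-measure set of tail positions giving a configuration in $N^{\rm c}$; this requires a Fubini-type argument against the right reference measure, and Subsection~4.5 does not contain such an argument for this theorem. The paper sidesteps the issue entirely by an integration trick: it introduces (i) a decoupling density forcing the annulus $\bar B_n\setminus B_n$ to be empty, so that Proposition~4.10 identifies $\hat\g_B$ with $\g^{\rm f}_B$ $\mu_t$-almost surely inside the integral, and (ii) a color-randomizing kernel $M_{B_n\setminus\L}$ together with the compensating weight $4^{|\o_{B_n\setminus\L}|}$, which lets one select the all-$+$ and all-$-$ colorings while remaining absolutely continuous with respect to $\mu_t$. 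One then bounds the resulting integral $I^\d_{\L,n}$ from above by the $\mu_t$-mass of the discontinuity indicator (using assumed a.s.\ continuity and dominated convergence) and from below by $\mu(\{B\leftrightarrow\infty\})$ via Proposition~4.11 (not Proposition~4.10, which only does the identification step). The contradiction between these two bounds is what proves non-asq-Gibbsianness; no explicit sequences in $N^{\rm c}$ are ever constructed.
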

For the symmetric model in the above regimes, we exhibit specifications $\g^+\neq\g^-$ for $\mu_t^+$ and $\mu_t^-$ which are non-almost-surely quasilocal in the appendix in Subsection~\ref{Ap_2}.

\medskip
The method of proof of Theorem~\ref{AlmostGibbs_2} is based on color perturbations. 
At the critical time for the symmetric model, $t=\infty$,  slightly refined arguments allow us to produce discontinuities with full mass via a different mechanism of spatial perturbations for the Gibbs measure $\mu^+_\infty=\mu^-_\infty$.  
The critical time for the asymmetric model shows different behavior. Here the specification $\hat\g$ that we presented already in Theorem~\ref{Gibbs} is still a specification where discontinuity points now have zero mass. This implies asq-Gibbsianness. 
%
\begin{thm}\label{AlmostGibbs_Crit}
Consider the symmetric model. Then $\mu^+_\infty$ is non-asq-Gibbs in the high-intensity regime. Moreover if $\mu^+_\infty\in\GG(\hat\g)$ for some specification $\hat\g$, then $\mu^+_\infty(\OO(\hat\g))=0$.
For the asymmetric model $\mu^+_{t_G}$ is asq-Gibbs but non-q-Gibbs in the high-intensity regime. 
\end{thm}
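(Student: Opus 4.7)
The plan is to treat the two regimes separately, both via the cluster representation of the time-evolved conditional probabilities (Lemma~3.4) combined with the ``switch'' analysis from Subsection~\ref{Switch}.

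\textbf{Symmetric model at $t=\infty$.} Since the spin-flip dynamics has fully equilibrated, $\mu^+_\infty$ is the grey WRM measure carrying an independent uniform $\{-,+\}$-coloring, and the color-based mechanism used in Theorem~\ref{AlmostGibbs_2} conveys no information whatsoever. I would instead produce discontinuities via \emph{spatial} perturbations: by Lemma~3.4 applied to the grey marginal, the conditional density in $B_r(x)$ picks up a combinatorial factor depending on the number of exterior clusters of $\oo_{B_r(x)^{\rm c}}$ that get merged by the interior points. Two arms of the \emph{same} exterior cluster entering $B_r(x)$ contribute differently from two arms of \emph{distinct} clusters, and a finite spatial perturbation of $\oo_{B_r(x)^{\rm c}}$ inside an annulus arbitrarily far from $B_r(x)$ can swap the two alternatives by reconnecting or disconnecting the arms through the outer region. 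Promoting this to a full-measure statement requires the continuum percolation estimates of Appendix~\ref{Ap}: in the high-intensity regime $\mu^+_\infty$-a.e.\ configuration is touched by at least two disjoint arms of the unique infinite cluster entering every $B_r(x)$, and a standard DLR/Fubini argument forces the discontinuity to be inherited by \emph{any} specification $\hat\g$ with $\mu^+_\infty\in\GG(\hat\g)$, yielding $\mu^+_\infty(\OO(\hat\g))=0$.

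\textbf{Asymmetric model at $t=t_G$.} A direct computation gives $\log\tfrac{1+e^{-2t_G}}{1-e^{-2t_G}}=\log\tfrac{\l_+}{\l_-}$, so the length scale $R$ of Theorem~\ref{Gibbs} diverges precisely at $t_G$, and the exponential bound of Theorem~\ref{Gibbs} degenerates. Nevertheless the specification $\hat\g$ constructed there from the formal infinite-cluster limit of Definition~\ref{GammaInf} should still extend by continuity to $t=t_G$ with $\mu^+_{t_G}\in\GG(\hat\g)$, using continuity in $t$ of the finite-volume cluster representation. For \emph{non-q-Gibbsianness} at $t_G$, I would exhibit an explicit bad configuration in which $B_r(x)$ is reached by a single infinite arm and show, via the cluster formula, that color perturbations of this arm at arbitrarily large distances produce two distinct limits for $\hat\g_{B_r(x)}(f|\,\cdot\,)$: at the critical switch the signal from far away is no longer exponentially damped. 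The same calculation rules out any other specification. For \emph{asq-Gibbsianness}, I would show that such bad configurations form a $\mu^+_{t_G}$-null set, exploiting that under $\mu^+_{t_G}$ the colors along any infinite cluster are asymptotically balanced at the critical ratio $\l_+/\l_-$, which is precisely the value at which the formal cluster limit in Definition~\ref{GammaInf} is insensitive to perturbations of the colors at infinity.

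The hardest step will be the almost-sure continuity of $\hat\g$ at $t=t_G$ in the high-intensity regime. For $t>t_G$ the exponential estimate of Theorem~\ref{Gibbs} does all the work automatically; at the critical time one must analyze the $t\searrow t_G$-limit of the switch along infinite clusters by hand, and establish $\mu^+_{t_G}$-almost-sure convergence of the switch to its critical value uniformly in the far conditioning. This will require a truncation argument — approximating infinite clusters by large finite ones and controlling the resulting finite-volume conditional probabilities uniformly — combining the cluster-representation bookkeeping of Section~\ref{Strategy} with fine percolation input from Appendix~\ref{Ap} to rule out ``atypical'' magnetisations along the arms of the infinite cluster.
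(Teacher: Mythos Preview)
Your overall architecture is right --- split the two cases, use the cluster representation, invoke the switch --- but there are two concrete gaps.

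\textbf{Symmetric case.} Your two-arm reconnection mechanism is more elaborate than what the paper needs, and your full-measure claim rests on an unproven statement: that $\mu^+_\infty$-a.e.\ configuration has \emph{two disjoint arms} of the infinite cluster entering every $B_r(x)$. Appendix~\ref{Ap} gives nothing of the sort (it only gives $\mu(\{B_r(x)\leftrightarrow\infty\})>0$ and $\to 1$ as $r\uparrow\infty$), and for fixed small $r$ the two-arm event certainly does not have full measure. The paper instead uses a \emph{truncation} perturbation (Proposition~\ref{NonGibbs_CritSym}): compare $\g^\infty_B(f|\hat\oo_{B^{\rm c}})$ with $\g^\infty_B(f|\hat\oo_{\L\sm B})$. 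Restricting to $\L$ turns the infinite cluster touching $B$ into a finite one, changing the factor from $1$ to $2$ in the symmetric formula, and this works as soon as $\hat\oo\in\{B\leftrightarrow\infty\}$ --- only one arm is needed. Full measure then follows by letting $B$ grow. The transfer to an arbitrary specification $\hat\g$ is done via a DLR decoupling identity (equation~\eqref{CutoffId}), not a generic Fubini argument.

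\textbf{Asymmetric case, asq part.} You have misidentified the mechanism. The colours along infinite clusters under $\mu^+_{t_G}$ are \emph{not} ``asymptotically balanced at the critical ratio $\l_+/\l_-$''. Under $\mu^+$ every infinite cluster is monochromatic $+$ at time zero; after independent spin-flip to time $t_G$ the empirical magnetisation on any infinite cluster is $m_{t_G}=e^{-2t_G}>0$ almost surely, by the strong law of large numbers applied to the i.i.d.\ flips (Lemma~\ref{Almost_Gibbs_Crit_Conc}). The critical magnetisation at which $g$ vanishes is $m=-1$, not $m_{t_G}$; since the typical magnetisation is bounded away from $-1$, one gets $\r(\hat\oo_{C\sm B})\to 0$ along infinite clusters and hence continuity of $\g^\infty$ at $\mu^+_{t_G}$-a.e.\ boundary condition (Proposition~\ref{Almost_Gibbs_Continuity_Crit}). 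No percolation estimates from Appendix~\ref{Ap} are needed here, and no $t\searrow t_G$ limit or ``truncation argument'' is required: Proposition~\ref{Gibbs_Specify} already covers $t=t_G$ directly.

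Your sketch of non-q-Gibbsianness at $t_G$ (single infinite arm, colour perturbations far away) is on the right track; the paper implements it with the explicit line configuration $\eta$ carrying an all-minus colouring, which hits exactly the critical value $g(-1)=0$.
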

Let us note that case $t=\infty$, in the symmetric and in the asymmetric regimes, is equivalent to the case where the colors are simply disregarded. More precisely, the above results imply that $\mu\circ T^{-1}$ with $T: \oo\mapsto\o $ is non-asq-Gibbs in the symmetric case and asq-Gibbs in the asymmetric case. 
%
%
Table~\ref{table1} provides an overview for Gibbsiannness transitions in time and intensity for the WRM under independent spin-flip evolution. 


\makegapedcells
\begin{table}[h!] 
  \centering
  \caption{Gibbsian transitions in time and intensity and the associated Theorems.
  }
\label{table1}
\scalebox{1}{
  \begin{tabular}{c|c|cV{3}c|cV{3}c|c|}
&$\GG(\g)$& time & high intensity &Thm & low intensity &Thm\\
\hlineB{5}
& &$0<t< t_G$  &  non-asq &\ref{AlmostGibbs_2}& asq, non-q &\ref{AlmostGibbs}\\ \cline{3-7}
$\l_+>\l_-$&$\mu^+$&$t=t_G$  & asq, non-q&\ref{AlmostGibbs_Crit} & asq, non-q&\ref{AlmostGibbs}\\ \cline{3-7}
& &$t_G<t\le\infty$  & q &\ref{Gibbs}& q&\ref{Gibbs}\\
\hlineB{3}
\multirow{2}{*}{$\l_+=\l_-$}&$\mu^{\hspace{0.2cm}}$&$0<t<\infty$  & non-asq &\ref{AlmostGibbs_2}& asq, non-q&\ref{AlmostGibbs}\\ \cline{3-7}
&$\mu^+$ &$t=\infty$  & non-asq &\ref{AlmostGibbs_Crit}& asq, non-q&\ref{AlmostGibbs}\\
\end{tabular}
}
\end{table}

\section{Strategy of proofs}\label{Strategy}
As a first step to the proofs, we derive an expression for the conditional expectation of the time-evolved Gibbs measure in a large but finite volume. This expression is based on a reformulation in terms of clusters of the grey configuration. A crucial quantity will be presented which involves the magnetization of the boundary condition. In certain time versus intensity regimes (as in the first and forth line of Table~\ref{table1}) this quantity will act as a 'switch' and infinite clusters can influence the finite-volume conditional probability. In other regimes (as in the second and third line of Table~\ref{table1}), the switch will be inactive and the model will turn out to be asq-Gibbs. 
%
%

\subsection{Notations}
Let us introduce the necessary notations. First we write $$\a=\frac{\l_+}{\l_-}$$ where we
always assume $\l_+\ge\l_-$ which favors the plus sign. The other case follows by symmetry.  

\subsubsection{Cluster types} Recall that we write $\o$ for the grey configuration of $\oo$. It will be of central importance to consider the connected components, that is, clusters of the grey configuration $\o$. We denote by $\CC(\o)$ the set of all clusters of $\o$ respectively $\oo$. Note that $\o$ can be identified with $\CC(\o)$. 
For some $\L\Subset\R^d$, fix grey configuration $\o_\L\o_{\L^{\rm c}}$. Then we distinguish two types of clusters.
\begin{enumerate}
\item $\CC_{\L}(\o_\L\o_{\L^c})=\{C\in\CC(\o_\L\o_{\L^{\rm c}}):\, C\not\subset \bar\L^{\rm c}\}$,
\item $\CC_{\L^{\rm c}}(\o_{\L^c})=\{C\in\CC(\o_\L\o_{\L^{\rm c}}):\, C\subset \bar\L^{\rm c} \}$
\end{enumerate}
where $\bar\L=\bigcup_{x\in\L}B_{2a}(x)$ and the type-two clusters are independent of $\o_\L$. 
In particular $\CC(\o_\L\o_{\L^{\rm c}})=\CC_{\L}(\o_\L\o_{\L^c})\cup\CC_{\L^{\rm c}}(\o_{\L^c})$ and we will often suppress the dependence on $\o_{\L^{\rm c}}$ in both clusters types to ease notation.

\subsubsection{Magnetization and the switch}\label{Switch} For a given colored configuration $\oo_\L$ we define the \textit{magnetization} as 
$$m(\oo_\L)=\frac{1}{|\o_\L|}\sum_{x\in \o_\L}\s_{x}\in[-1,1]$$
where $\s$ is the coloring of $\o_\L$. Further we denote by $|\s|^\pm$ the number of $\pm$-spins in $\s$.
For magnetization $m\in[-1,1]$, the sign of the following quantity will be important
$$g(m)=\log\frac{\l_+}{\l_-}+m\log\frac{1+e^{-2t}}{1-e^{-2t}}.$$
Recall our definition $t_G=\tfrac{1}{2}\log\tfrac{\l_++\l_-}{\l_+-\l_-}$ for $\l_+>\l_-$. We can distinguish several regimes:
\begin{enumerate}
\item For $\l_+>\l_-$: $t_G<t\le \infty$ $\quad\Rightarrow\quad$ $g(m)>0$ for all $m\in[-1,1]$
\item For $\l_+=\l_-$: $t=\infty$ $\quad\Rightarrow\quad$ $g(m)=0$ for all $m\in[-1,1]$
\item For $\l_+>\l_-$: $t_G=t$ $\quad\Rightarrow\quad$ $g(m)> 0$ for all $m\in(-1,1]$ and $g(-1)=0$
\item In all other cases $g(m)$ has no definite sign.
\end{enumerate}
In short, the Case (1) implies q-Gibbsianness. The Case (2) although $g$ is fixed, gives rise to non-asq-Gibbsian behavior due to cluster perturbations. The Case (3) implies asq-Gibbsianness since the change of sign is only possible for $m=-1$ which is of zero mass. The Case (4) gives rise to non-asq-Gibbsian behavior due to color perturbations. 
The quantity $g$ is going to appear in the following form which we will call the \textit{switch}
$$\r(\oo_C)=\exp\Big(-|\o_C|g\big(m(\oo_C)\big)\Big).$$
Note that if the cluster $C$ is infinite and the magnetization is well-defined, then $\r(\oo_C)\in\{0,1,\infty\}$ depending on the sign of $g\big(m(\oo_C)\big)$.

\medskip
In the following subsection we give useful representations of the finite-volume versions of the time-evolved Gibbsian specification of the WRM. In particular they will exhibit the switch. To further ease notation, for the rest of the section, we will write $B=B_r(x)$ for some $x\in\R^d$ and $r>0$ and denote $P^\pm$ the PPP with intensity $\l_\pm$.

\subsection{Finite-volume conditional probabilities}
Let us fix some $0<t\le \infty$, $\L\Subset\R^d$ and $\oo_{\L^{\rm c}}\in \OO$ a configuration in $\L^{\rm c}$ obeying the color constraint. The time-evolved WRM in $\L$ with (not-time evolved) boundary condition $\oo_{\L^{\rm c}}$, is given by
\begin{equation*}\label{FiniteEasy}
\begin{split}
\mu^{\oo_{\L^{\rm c}}}_{t,\L}(f)=\int\g_\L(d\oo_\L|\oo_{\L^{\rm c}})\int p_t(\s_{\o_\L},d\hat\s_{\o_\L})f(\hat\oo_{\L}).
\end{split}
\end{equation*}
The following cluster representation of this finite-volume time-evolved WRM. Recall that we write  $\s_{\o}$ for the coloring of $\oo$. 

\begin{lem}\label{Finite}
Let $0<t\le\infty$, $B\subset\L\Subset\R^d$ and $\oo_{\L^{\rm c}}\in \OO$ any configuration in $\L^{\rm c}$ obeying the color constraint. Consider the boundary configuration $\hat\oo_{\L\sm B}\in\OO$, then for any $f\in\FF^b_B$, we have
$\mu^{\oo_{\L^{\rm c}}}_{t,\L}(f|\hat\oo_{\L\sm B})=\g^{\oo_{\L^{\rm c}}}_B(f|\hat\oo_{\L\sm B})$
where 
\begin{equation*}
\begin{split}
&\g^{\oo_{\L^{\rm c}}}_B(f|\hat\oo_{\L\sm B})=\frac{\int P^-_B(d \o_B)f^\L(\o_B)\prod_{C\in\CC_{B}(\o_B)}\big(\a^{|C\cap B|}\one_{\s_{C\cap \L^{\rm c}}=+}+\r(\hat\oo_{C\sm B})\one_{\s_{C\cap \L^{\rm c}}=-}\big)}
{\int P^-_B(d \o_B)\prod_{C\in\CC_{B}(\o_B)}\big(\a^{|C\cap B|}\one_{\s_{C\cap \L^{\rm c}}=+}+\r(\hat\oo_{C\sm B})\one_{\s_{C\cap \L^{\rm c}}=-}\big)}.
\end{split}
\end{equation*}
where $f^\L(\o_B)=\nu^{\oo_{\L^{\rm c}}}_B(f(\o_B,\cdot)|\hat\oo_{\L\sm B},\o_B)$ with 
\begin{align*}
\nu^{\oo_{\L^{\rm c}}}_B&(\hat\s_{\o_B}|\hat\oo_{\L\sm B},\o_B)=\cr
&\frac{\prod_{C\in\CC_B(\o_B)}\big(\a^{|C\cap B|}\tfrac{p_t{(+,+)}^{|\hat\s_{C\cap B}|^+}}{p_t(+,-)^{-|\hat\s_{C\cap B}|^-}}\one_{\s_{C\cap \L^{\rm c}}=+}+\tfrac{p_t(-,+)^{|\hat\s_{C\cap B}|^+}}{p_t(-,-)^{-|\hat\s_{C\cap B}|^-}}\r(\hat\oo_{C\sm B})\one_{\s_{C\cap \L^{\rm c}}=-}\big)}{\prod_{C\in\CC_B(\o_B)}\big(\a^{|C\cap B|}\one_{\s_{C\cap \L^{\rm c}}=+}+\r(\hat\oo_{C\sm B})\one_{\s_{C\cap \L^{\rm c}}=-}\big)}.
\end{align*}
\end{lem}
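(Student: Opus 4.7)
The plan is to compute the regular conditional expectation on the left-hand side explicitly and match it term by term with the stated formula. First I will use the equivalent representation $\g_\L(d\oo_\L|\oo_{\L^c})\propto P^{\l_-}_\L(d\o_\L)\sum_{\s_{\o_\L}}\a^{|\s|^+}\chi(\oo_\L\oo_{\L^c})$ of the WRM specification and compose with the independent spin-flip kernels $\prod_{x\in\o_\L}p_t(\s_x,d\hat\s_x)$; since positions are preserved by the dynamics, summing out the unobserved time-zero spins shows that the joint law of $(\hat\o_\L,\hat\s_{\hat\o_\L})$ under $\mu^{\oo_{\L^c}}_{t,\L}$ has density (w.r.t.~$P^{\l_-}_\L\otimes\mathrm{counting}$) proportional to $W(\hat\oo_\L):=\sum_{\s_{\hat\o_\L}}\a^{|\s|^+}\chi(\s\hat\o_\L\oo_{\L^c})\prod_{x\in\hat\o_\L}p_t(\s_x,\hat\s_x)$. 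Hence
\[
\mu^{\oo_{\L^c}}_{t,\L}(f|\hat\oo_{\L\sm B})=\frac{\int P^{\l_-}_B(d\o_B)\sum_{\hat\s_{\o_B}}f(\hat\oo_B)\,W(\hat\oo_B\hat\oo_{\L\sm B})}{\int P^{\l_-}_B(d\o_B)\sum_{\hat\s_{\o_B}}W(\hat\oo_B\hat\oo_{\L\sm B})},
\]
and the rest of the argument is an algebraic identification of this ratio with the claimed formula.

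Next, because $\chi$ forces the time-zero spin to be constant on each connected component of $\hat\o_\L\cup\o_{\L^c}$, the weight factorises as $W=\prod_C W_C$ over clusters, and summing the permitted $\s_C\in\{\pm\}$ produces the three cases (a) $W_C=\a^{|C|}p_+^{|\hat\s_C|^+}p_-^{|\hat\s_C|^-}+p_-^{|\hat\s_C|^+}p_+^{|\hat\s_C|^-}$ when $C\cap\L^c=\es$, (b) $W_C=\a^{|C\cap\L|}p_+^{|\hat\s_{C\cap\L}|^+}p_-^{|\hat\s_{C\cap\L}|^-}$ when $\s_{C\cap\L^c}=+$, and (c) $W_C=p_-^{|\hat\s_{C\cap\L}|^+}p_+^{|\hat\s_{C\cap\L}|^-}$ when $\s_{C\cap\L^c}=-$, with $W_C=0$ on a colour-conflicting boundary cluster. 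The key geometric observation is that $\o_B\subset B\subset\bar B$ while every cluster in $\CC_{B^c}(\o_B)$ consists of points at distance $>2a$ from $B$, hence from $\o_B$. Consequently $\CC_{B^c}(\o_B)$ coincides with the $\bar B$-avoiding clusters of $\hat\o_{\L\sm B}\cup\o_{\L^c}$ alone, so the product $\prod_{C\in\CC_{B^c}}W_C$ depends neither on $\o_B$ nor on $\hat\s_{\o_B}$ and cancels between numerator and denominator.

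For the surviving product $\prod_{C\in\CC_B(\o_B)}W_C$ I will invoke the algebraic identity $p_-^{a^+}p_+^{a^-}=\r(\hat\oo_S)\,\a^{|S|}\,p_+^{a^+}p_-^{a^-}$ (with $|S|=a^++a^-$ and $m(\hat\oo_S)=(a^+-a^-)/|S|$), which is immediate from $\r=e^{-|S|g(m)}$ and $g(m)=\log\a+m\log(p_+/p_-)$. Splitting each exponent along the disjoint decomposition $C=(C\cap B)\sqcup(C\cap(\L\sm B))\sqcup(C\cap\L^c)$ and applying the identity to the $C\sm B$ part where the $p_-^{\cdot}p_+^{\cdot}$ combination occurs, one obtains in every case a factorisation $W_C=K_C\cdot L_C$, where $L_C$ is precisely the per-cluster factor
\[
\a^{|C\cap B|}p_+^{|\hat\s_{C\cap B}|^+}p_-^{|\hat\s_{C\cap B}|^-}\one_{\s_{C\cap\L^c}=+}+\r(\hat\oo_{C\sm B})p_-^{|\hat\s_{C\cap B}|^+}p_+^{|\hat\s_{C\cap B}|^-}\one_{\s_{C\cap\L^c}=-}
\]
appearing in the numerator of $\nu^{\oo_{\L^c}}_B\cdot\pi(\o_B)$ (with the convention that both indicators are active in case~(a) and $\r(\hat\oo_\es)=1$), and $K_C$ depends only on $C\cap(\L\sm B)$, $\hat\s_{C\cap(\L\sm B)}$ and, in case~(c), on $C\cap\L^c$. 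The hard part is to show that $K:=\prod_{C\in\CC_B(\o_B)}K_C$ is independent of $\o_B$: the same geometric observation shows that inserting $\o_B\subset B$ can only merge pre-existing $\bar B$-touching clusters of $\hat\o_{\L\sm B}\cup\o_{\L^c}$ without ever attracting a cluster that stayed outside $\bar B$, and a compatible merge cannot mix type~(b) and type~(c) sub-clusters (else $W_C=L_C=0$); hence each additive exponent appearing in $\prod_C K_C$—in particular $\sum_C|C\cap(\L\sm B)|$, $\sum_C|\hat\s_{C\cap(\L\sm B)}|^\pm$ and $\sum_{C\text{ of type~(c)}}|C\cap\L^c|$—reduces to the analogous quantity in the $\o_B$-free decomposition. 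Pulling $K$ out of both integrals, the identity $p_++p_-=1$ collapses $\sum_{\hat\s_{C\cap B}}L_C$ into $\a^{|C\cap B|}\one_{\s_{C\cap\L^c}=+}+\r(\hat\oo_{C\sm B})\one_{\s_{C\cap\L^c}=-}$, so the denominator becomes $\int P^{\l_-}_B\,\pi$ and the numerator becomes $\int P^{\l_-}_B\,\pi\,f^\L$ with $f^\L$ the $\nu^{\oo_{\L^c}}_B$-average of $f$, yielding the stated expression.
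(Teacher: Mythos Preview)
Your proof is correct and follows essentially the same route as the paper's: write the conditional expectation as a ratio, factorise over clusters using the colour constraint, cancel the $\CC_{B^{\rm c}}$ contributions, and then extract the $\o_B$-independent prefactors so that only the switch $\r(\hat\oo_{C\sm B})$ survives. The paper organises the algebra via the magnetisation $m_C$ and the pair $\r^{C\sm B}_\pm=(\hat\l_\pm q_t^{\pm m_C/2})^{|C\cap\L\sm B|}$, then notes by ``a small inspection'' that $\prod_C\r^{C\sm B}_+$ is $\o_B$-independent; you accomplish the same cancellation via the identity $p_-^{a^+}p_+^{a^-}=\r(\hat\oo_S)\,\a^{|S|}p_+^{a^+}p_-^{a^-}$ and the factorisation $W_C=K_C L_C$, making the $\o_B$-independence of $K=\prod_C K_C$ explicit through your merging argument. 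Your explicit justification that inserting $\o_B$ can only merge pre-existing $\bar B$-touching clusters (never attract a cluster from $\CC_{B^{\rm c}}$) is precisely the content of the paper's ``small inspection'', so you have spelled out a step the paper leaves implicit. Two minor cosmetic points: the extra dependence of $K_C$ on $C\cap\L^{\rm c}$ you flag in case~(c) is in fact absent (your $K_C$ has the same form $\a^{|C\cap(\L\sm B)|}p_+^{|\hat\s_{C\cap(\L\sm B)}|^+}p_-^{|\hat\s_{C\cap(\L\sm B)}|^-}$ in all three cases), and the symbol $\pi(\o_B)$ at the end is never defined --- presumably you mean the per-cluster factor $\a^{|C\cap B|}\one_{+}+\r(\hat\oo_{C\sm B})\one_{-}$.
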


Some words of explanation.
(1)  We give a representation of the finite-volume specification $\g_B$ in $B$ within a bigger but still finite-volume $\L$ in terms of clusters. This has the advantage to well quantify the probabilistic costs of changing color from time zero to time $t$, dependent on the size of the cluster. More precisely, the indicator functions express the fact that if a cluster is connected to the plus boundary of $\L$, then the whole cluster starts to time evolve from the plus color. Moreover, the coloring $\hat\s_{C\sm B}$ on a given cluster outside of $B$, but still inside $\L$, at time $t$, creates an additional weight-factor $\r$, the switch. The interpretation is that, according to the coloring in the condition it is more likely (or less likely) for the cluster to start from an all plus or all minus coloring at time zero. 
Of course, spatial positions of colors in that cluster play no role and thus, the weight factor can be expressed in terms of the magnetization and the size of the cluster. 
(2) The color-transition probabilities inside $B$ for given grey configurations is given by the measure $\nu_B$. In case $f$ only depends on grey configurations, we have $f^\L=f$. 
(3) Note the interesting fact that even if $f$ only depends on grey configurations, a perturbation of colors in the boundary condition, that is, a change in the magnetization, leads to a change in the expectation of $f$ w.r.t.~the kernel. This is in particular also true in the symmetric case when for example $\o_{\L^{\rm c}}=\es_{\L^{\rm c}}$. Indeed, let $\l_+=\l_-$, then
\begin{equation}\label{FinCrit}
\begin{split}
&\g^{\es_{\L^{\rm c}}}_{B}(f|\hat\oo_{\L\sm B})=\frac{\int P^-_B(d \o_B)f(\o_B)\prod_{C\in\CC_{B}(\o_B)}\big(1+\r(\hat\oo_{C\sm B})\big)}
{\int P^-_B(d \o_B)\prod_{C\in\CC_{B}(\o_B)}\big(1+\r(\hat\oo_{C\sm B})\big)}
\end{split}
\end{equation}
where 
$\r(\hat\oo_{C\sm B})
=\exp(\sum_{x\in C\sm B}\hat\s_x\log\tanh(t))$ and the effect of color perturbations is represented in the exponent. This interesting phenomenon of non-locality is typical for point processes and goes beyond the Ising world. 

\medskip
This finite-volume representation suggests the following heuristics for the infinite volume: 
Depending on the sign of $g(\oo_C)$, $\r$ tends to zero, infinity or equals one as $|\o_C|$ tends to infinity almost surely. In particular, if $t_G<t\le \infty$ in the asymmetric case or $t=\infty$ in the symmetric case, the switch is inactive, and there is no dependence on the magnetization on these infinite clusters. Thus, in the asymmetric case, $\r$ becomes small as connected clusters can become large, independently of the size of the magnetization on which we condition and q-Gibbsianness will follow. If the magnetization dependence remains as connected clusters grow large, sensitive dependence on the boundary condition remains and non-q-Gibbssianness 
will follow. Moreover, note that in the low-intensity regime, the configurations containing an infinite cluster form a nullset, this will lead to asq-Gibbsianness.

\subsection{The infinite-volume specification and q-Gibbsianness}\label{InfVolGibbsCase}
First note that for q-Gibbsianness we assume asymmetric parameter regimes where 
$$g(m)\ge g(-1)=g_->0.$$
In this case, the switch is inactive even on configurations with infinite clusters, where the magnetization can not be changed by local color perturbations. 

\medskip
Inactive switches allow us to build a family of infinite-volume kernels by taking extra care only for the infinite clusters. We denote $\CC^\infty_{B}(\o_B)\subset \CC_B(\o_B)$ the set of infinity clusters in $\CC_B(\o_B)$ and $\CC^{\rm f}_{B}(\o_B)\subset \CC_B(\o_B)$ the set of finite clusters.
%

\begin{defn}\label{GammaInf}
We define for $\L\Subset\R^d$
\begin{equation*}
\begin{split}
&\g^\infty_\L(f|\hat\oo_{\L^{\rm c}})=\frac{\int P^-_\L(d \o_\L)f^\infty(\o_\L)\prod_{C\in\CC^{\rm f}_{\L}(\o_\L)}\big(\a^{|C\cap \L|}+\r(\hat\oo_{C\sm \L})\big)\prod_{C\in\CC^\infty_{\L}(\o_\L)}\a^{|C\cap \L|}}
{\int P^-_\L(d \o_\L)\prod_{C\in\CC^{\rm f}_{\L}(\o_\L)}\big(\a^{|C\cap \L|}+\r(\hat\oo_{C\sm \L})\big)\prod_{C\in\CC^\infty_{\L}(\o_\L)}\a^{|C\cap \L|}}
\end{split}
\end{equation*}
where $f^\infty(\o_\L)=\nu^\infty_\L(f(\o_\L,\cdot)|\hat\oo_{\L^{\rm c}},\o_\L)$ with  
\begin{align*}
&\nu^\infty_\L(\hat\s_{\o_\L}|\hat\oo_{\L^{\rm c}},\o_\L)=\prod_{C\in\CC^\infty_\L(\o_\L)}p_t(+,+)^{|\hat\s_{C\cap \L}|^+}p_t(+,-)^{|\hat\s_{C\cap \L}|^-}\times\cr
&\tfrac{\prod_{C\in\CC^{\rm f}_\L(\o_\L)}\big(\a^{|C\cap \L|}p_t(+,+)^{|\hat\s_{C\cap \L}|^+}p_t(+,-)^{|\hat\s_{C\cap \L}|^-}+p_t(-,+)^{|\hat\s_{C\cap \L}|^+}p_t(-,-)^{|\hat\s_{C\cap \L}|^-}\r(\hat\oo_{C\sm \L})\big)}
{\prod_{C\in\CC^{\rm f}_\L(\o_\L)}\big(\a^{|C\cap \L|}+\r(\hat\oo_{C\sm \L})\big)}.
\end{align*}
Further we denote $\g^\infty=(\g^\infty_\L)_{\L\Subset\R^d}$.
\end{defn} 
Let us first assert properness and consistency of $\g^\infty$.
\begin{lem}\label{Gibbs_Spec}
For all times and intensities $\g^\infty$ is a specification. 
\end{lem}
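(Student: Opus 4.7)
For \emph{properness}, the key point is that $\nu^\infty_\L(\cdot|\hat\oo_{\L^{\rm c}},\o_\L)$ is a probability kernel on colorings of $\o_\L$. One checks this cluster by cluster: summing over $\hat\s_{C\cap\L}$, each factor for $C\in\CC^\infty_\L$ collapses to $1$ since $\sum_{\pm}p_t(+,\pm)=1$, and each factor for $C\in\CC^{\rm f}_\L$ reproduces exactly $\a^{|C\cap\L|}+\r(\hat\oo_{C\sm\L})$, matching the denominator. Consequently, for $f\in\FF^b_{\L^{\rm c}}$ the function $f^\infty(\o_\L)=f(\hat\oo_{\L^{\rm c}})$ is constant in $\o_\L$ and pulls out of both the numerator and the denominator of $\g^\infty_\L(f|\hat\oo_{\L^{\rm c}})$. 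That $\g^\infty_\L(\cdot|\hat\oo_{\L^{\rm c}})$ is a probability measure on $\OO$ is read off from the explicit normalization, and measurability in the boundary condition is immediate.

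For \emph{consistency}, i.e.\ $\g^\infty_\D(\g^\infty_\L(f|\cdot)|\tilde\oo_{\D^{\rm c}})=\g^\infty_\D(f|\tilde\oo_{\D^{\rm c}})$ for $\L\subset\D\Subset\R^d$, the central algebraic input is the multiplicativity of the switch. Writing
\begin{equation*}
|\o_C|g(m(\oo_C))=|\o_C|\log\a+\big(|\s_{\o_C}|^+-|\s_{\o_C}|^-\big)\log\coth(t),
\end{equation*}
the right-hand side is additive over any disjoint splitting of the point set of $C$, so $\r(\oo_{C_1\sqcup C_2})=\r(\oo_{C_1})\r(\oo_{C_2})$; the factor $\a^{|\cdot|}$ and the single-particle color-transition probabilities in $\nu^\infty$ are trivially multiplicative as well. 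This lets every cluster-weight factor appearing in the $\L$-view be re-expressed as a product of analogous factors indexed by the (generally finer) $\D$-view cluster partition.

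I would then unfold both sides into a single integral against $P^-_\L\otimes P^-_{\D\sm\L}$ with joint colorings $(\hat\s_{\o_\L},\hat\s_{\o_{\D\sm\L}})$. Each realization determines the full cluster partition of $\o_\L\cup\hat\o_{\D\sm\L}\cup\tilde\o_{\D^{\rm c}}$, and each $\L$-cluster $C^\L$ is the union of those $\D$-clusters $C^\D$ that are glued together by bridging points of $\hat\o_{\D\sm\L}$, with $C^\L$ infinite iff at least one such $C^\D$ is infinite. Using multiplicativity, the weights $\a^{|C^\L\cap\L|}+\r(\hat\oo_{C^\L\sm\L})$ for finite $\L$-clusters (resp.\ $\a^{|C^\L\cap\L|}$ for infinite ones) regroup into products of the analogous $\D$-view factors, with the bridging contributions from $\hat\o_{\D\sm\L}$ absorbed into the $P^-_{\D\sm\L}$-integration and the $\nu^\infty$ color factors. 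The inner partition function $Z^\infty_\L(\hat\oo_{\D\sm\L}\tilde\oo_{\D^{\rm c}})$ cancels against matching sub-expressions of the outer partition function, leaving exactly $\g^\infty_\D(f|\tilde\oo_{\D^{\rm c}})$.

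The main obstacle is this cluster-merging bookkeeping. The cleanest way to organize it is first to condition on the grey configuration $\o_\D$, which reduces the identity to a product of cluster-by-cluster statements that follow directly from the multiplicativity of $\r$ and $\a^{|\cdot|}$; the color integrations then close via the probability-kernel property of $\nu^\infty$ already used in the properness step. A mild subtlety to watch out for is that a finite $\L$-cluster may be contained in an infinite $\D$-cluster (when the bridging to infinity happens through $\tilde\o_{\D^{\rm c}}$ beyond $\D$), in which case the $\a^{|C^\L\cap\L|}+\r(\hat\oo_{C^\L\sm\L})$-factor has to be matched against the pure $\a^{|C^\D\cap\D|}$-weight of the $\D$-view; this is precisely where the hypothesis that $\g^\infty_\D$ carries no switch on infinite clusters is used.
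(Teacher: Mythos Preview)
Your overall plan---direct computation of consistency, with multiplicativity of the switch $\r$ as the key algebraic input---is exactly the paper's approach, and your properness argument is fine. The paper streamlines the algebra by first passing to the equivalent $P^+$-form of $\g^\infty_\L$ (absorbing $\a^{|\o_\L|}$ into the intensity), which makes the infinite-cluster factors disappear from the product entirely and reduces the finite-cluster weight to $1+\r(\hat\oo_{C\cap\L})\r(\hat\oo_{C\sm\L})$; the consistency identity then becomes a short chain of equalities.

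However, your description of the cluster bookkeeping is confused, and the ``subtlety'' you flag does not exist. Once you unfold the left-hand side, the outer kernel $\g^\infty_\D$ produces a grey configuration $\o_\D$ and colors, but the integrand $\g^\infty_\L(f|\cdot)$ depends only on the $(\D\setminus\L)$-part; hence the first step (and this is what the paper does) is to integrate out the outer's $\o_\L$-part. After that, the inner $\g^\infty_\L$ and the (partially integrated) outer $\g^\infty_\D$ are built on the \emph{same} grey configuration $\o_\L\cup\hat\o_{\D\sm\L}\cup\tilde\o_{\D^{\rm c}}$, so the cluster partitions are \emph{identical}---there is no gluing of $\D$-clusters into $\L$-clusters, and no possibility of a finite $\L$-cluster sitting inside an infinite $\D$-cluster. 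The only difference between the two views is which clusters enter the product: $\CC_\L\subset\CC_\D$ because $\bar\L\subset\bar\D$, and the extra clusters in $\CC_\D\setminus\CC_\L$ lie entirely in $\bar\L^{\rm c}$ and are therefore independent of $\o_\L$. Multiplicativity of $\r$ is then used only to rewrite $\r(\hat\oo_{C\cap\D})\r(\hat\oo_{C\sm\D})=\r(\hat\oo_{C\cap\L})\r(\hat\oo_{C\sm\L})$ for $C\in\CC^{\rm f}_\L$, so that the inner partition function is recognized and cancelled. If you redo your argument with this picture, the ``main obstacle'' you describe evaporates.
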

The following proposition shows that in the right parameter regime, where the switch can not be fully used, $\g^\infty$ is indeed a specification for the time-evolved Gibbs measure. 
\begin{prop}\label{Gibbs_Specify}
In the asymmetric regime assume $t_G\le t\le\infty$, then $\mu^+_t\in\GG(\g^\infty)$. 
\end{prop}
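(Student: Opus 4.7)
The plan is to derive the DLR equation for $\g^\infty$ with respect to $\mu^+_t$ by a finite-volume approximation argument based on Lemma~\ref{Finite}, using that $\mu^+$ is the infinite-volume limit of WRM measures with all-plus boundary condition. Fix $B\Subset\R^d$, a local observable $f\in\FF^b_B$, and a finite $A\supset B$ together with $g\in\FF^b_{A\sm B}$. For any $\Delta\supset A$, Lemma~\ref{Finite} together with the definition of the finite-volume time-evolved measure yields
\begin{equation*}
\int g(\hat\oo_{A\sm B})\,f(\hat\oo_B)\,\mu^{+_{\Delta^{\rm c}}}_{t,\Delta}(d\hat\oo)=\int g(\hat\oo_{A\sm B})\,\g^{+_{\Delta^{\rm c}}}_B(f|\hat\oo_{\Delta\sm B})\,\mu^{+_{\Delta^{\rm c}}}_{t,\Delta}(d\hat\oo).
\end{equation*}
As $\Delta\uparrow\R^d$, the left-hand side converges to $\int g\,f\,d\mu^+_t$ by locality of the spin-flip dynamics together with the $\t$-type convergence $\mu^{+_{\Delta^{\rm c}}}_{t,\Delta}\to\mu^+_t$ on cylinder events (a consequence of $\mu^+$ being the plus-extremal Gibbs measure). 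The aim is to match the limit of the right-hand side with $\int g\,\g^\infty_B(f|\hat\oo_{B^{\rm c}})\,\mu^+_t(d\hat\oo)$.

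The crux is the pointwise convergence $\g^{+_{\Delta^{\rm c}}}_B(f|\hat\oo_{\Delta\sm B})\to\g^\infty_B(f|\hat\oo_{B^{\rm c}})$ for $\mu^+_t$-typical $\hat\oo_{B^{\rm c}}$. I would decompose $\CC_B(\o_B)$ (relative to the grey configuration $\o_B\cup\hat\o_{B^{\rm c}}$) into finite and infinite clusters. For a finite cluster $C$, once $\Delta$ is large enough $C\cap\Delta^{\rm c}=\es$ so both indicators in Lemma~\ref{Finite} are vacuously true and the contribution equals $\a^{|C\cap B|}+\r(\hat\oo_{C\sm B})$, matching the finite-cluster factor of $\g^\infty_B$. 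For an infinite cluster $C$, eventually $C\cap\Delta^{\rm c}\neq\es$, and the all-plus boundary forces $\s_{C\cap\Delta^{\rm c}}=+$, collapsing the indicators so that the contribution reduces to $\a^{|C\cap B|}$, matching the infinite-cluster factor of $\g^\infty_B$. Under the assumption $t_G\le t\le\infty$ one has $g(m)\ge 0$ throughout, so $\r(\hat\oo_{C\sm B})\le 1$, and each cluster factor is dominated by $\a^{|C\cap B|}+1$. Since $|\CC_B(\o_B)|$ is almost-surely finite (bounded by $|\o_B|$ plus the almost-surely finite number of clusters of $\hat\o_{B^{\rm c}}$ meeting the $2a$-neighborhood of $B$) and the $P^-_B$-integrals carry all Poisson moments, dominated convergence gives the claimed pointwise limit, and then a second application of dominated convergence on the outer $\mu^{+_{\Delta^{\rm c}}}_{t,\Delta}$-integral (using uniform boundedness of the integrand by $\Vert g\Vert\Vert f\Vert$ and setwise convergence on $\FF_A$) identifies the limit with $\int g\,\g^\infty_B(f|\cdot)\,d\mu^+_t$. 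Since $g\in\FF^b_{A\sm B}$ and $A\supset B$ are arbitrary, $\g^\infty_B(f|\cdot)$ is a version of $\E_{\mu^+_t}[f|\FF_{B^{\rm c}}]$, which is the DLR equation.

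The main obstacle lies in the rigorous treatment of the infinite-cluster limit at the boundary values $t=t_G$ and $t=\infty$. At $t=t_G$, the relation $g(-1)=0$ means the switch $\r$ fails to vanish on an infinite cluster whose time-$t$ magnetization equals $-1$; under $\mu^+_t$, however, each infinite cluster inherits, via the independent single-site Poissonian spin flips, a strictly positive expected fraction of $+$-spins, so the set $\{m(\hat\oo_C)=-1,|C|=\infty\}$ is a $\mu^+_t$-nullset and can be discarded in the pointwise convergence argument. At $t=\infty$, $g$ is the positive constant $\log\a$, the switch on any infinite cluster is identically zero, and the argument simplifies accordingly. A minor additional point is the measurability of $\CC^{\rm f}_B$ and $\CC^\infty_B$ as a function of the grey configuration, which follows from the standard percolation-type construction of clusters from the counting $\s$-algebra.
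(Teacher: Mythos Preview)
Your overall strategy---starting from the finite-volume identity of Lemma~\ref{Finite} and passing $\Delta\uparrow\R^d$---is the same as the paper's. The gap is in how you take the limit on the right-hand side. You assert pointwise convergence $\g^{+_{\Delta^{\rm c}}}_B(f|\hat\oo_{\Delta\sm B})\to\g^\infty_B(f|\hat\oo_{B^{\rm c}})$ for $\mu^+_t$-typical $\hat\oo$ and then invoke ``dominated convergence on the outer $\mu^{+_{\Delta^{\rm c}}}_{t,\Delta}$-integral using setwise convergence on $\FF_A$''. This does not go through: the integrand $g(\hat\oo_{A\sm B})\,\g^{+_{\Delta^{\rm c}}}_B(f|\hat\oo_{\Delta\sm B})$ depends on the conditioning throughout $\Delta\sm B$, so it is \emph{not} $\FF_A$-measurable, and setwise convergence on $\FF_A$ says nothing about its integral. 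More fundamentally, both the integrand and the integrating measure vary with $\Delta$, while your pointwise convergence is formulated for $\mu^+_t$-typical points and the integral is taken against $\mu^{+_{\Delta^{\rm c}}}_{t,\Delta}$; no dominated-convergence theorem bridges this situation.

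The paper resolves this in two moves. First, it inserts an \emph{intermediate} bounded volume $\D$ and compares everything to the genuinely local kernel $\g^{+_{\D^{\rm c}}}_B(f|\cdot)\in\FF^b_{\D\sm B}$; for this fixed local function the convergence $\mu^{+_{\L^{\rm c}}}_{t,\L}\to\mu^+_t$ on bounded local observables applies cleanly, and one is reduced to controlling $\Vert\g^{+_{\L^{\rm c}}}_B(f|\cdot)-\g^\infty_B(f|\cdot)\Vert$. Second---and this is the substantive estimate you are missing---it proves this \emph{uniform} bound tends to zero, using that any cluster in $\CC^{{\rm f},\L^{\rm c}}_B(\o_B)$ must contain at least $d(B,\L^{\rm c})/(2a)$ points, whence $\r(\hat\oo_{C\sm B})\le e^{-g_-\,d(B,\L^{\rm c})/(2a)}$ for $t>t_G$. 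At $t=t_G$ the supremum is restricted to the full-measure set $\OO^{m_{t_G}}$ of Lemma~\ref{Almost_Gibbs_Crit_Conc}, on which infinite-cluster magnetizations stay bounded away from $-1$, yielding the same exponential decay with $\log\a$ in place of $g_-$. Your cluster-by-cluster analysis of the switch captures the correct heuristic, but the mode of convergence must be upgraded from pointwise-a.s.\ to uniform (on a full-measure set at $t=t_G$) for the argument to close.
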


In the large-time regimes, we can further prove a strong form of quasilocality of $\g^\infty$. 
\begin{prop}\label{Gibbs_Continuity}
In the asymmetric regime assume $t_G< t\le \infty$, then $g_->0$ and there exists finite $A=A(\l_+,\l_-,r)$ such that for all $\hat\oo\in\OO$ and observables $f\in\FF^b_B$,
\begin{equation*} 
\sup_{\oo^1,\oo^2\in\, \OO} 
\bigl| \g^\infty_{B}(f |\hat\oo_{\L\sm B}\oo_{\L^{\rm c}}^1)-\g^\infty_B(f |\hat\oo_{\L\sm B}\oo_{\L^{\rm c}}^2)\bigr| \leq A\Vert f \Vert e^{- g_-d(B,\L^{\rm c})/(2a)}.
\end{equation*}
\end{prop}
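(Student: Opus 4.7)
My plan is to exploit three ingredients: a uniform lower bound on the switch exponent $g$, a geometric lower bound on the length of any cluster influenced by the perturbation of $\o_{\L^{\rm c}}$, and an exact cancellation of exterior clusters which reduces the count of relevant clusters to the Poissonian $|\o_B|$. For the first ingredient, in the regime $t_G<t\le\infty$ one has $g(m)=\log\a+m\log\tfrac{1+e^{-2t}}{1-e^{-2t}}\ge g_-:=g(-1)>0$ uniformly in $m\in[-1,1]$; strict positivity is exactly the condition $t>t_G\Leftrightarrow \tfrac{1+e^{-2t}}{1-e^{-2t}}<\a$. Consequently $\r(\hat\oo_{C\sm B})\le e^{-g_-|C\sm B|}$ for every finite cluster $C$, which is the source of the exponential decay.

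For the geometric ingredient, for each fixed $\o_B$ I would identify the clusters in $\CC_B(\o_B)$ that are sensitive to the perturbation $\o^1_{\L^{\rm c}}\mapsto \o^2_{\L^{\rm c}}$ as exactly those clusters $C$ which contain points of $\o^i_{\L^{\rm c}}$ in at least one of the two configurations. Any such $C$ must contain a connected chain from $\bar B$ to $\del\L$ through the fixed configuration $\hat\oo_{\L\sm B}$; since adjacency requires $2a$-balls to overlap, this chain contains at least $L:=\lceil d(B,\L^{\rm c})/(2a)\rceil$ points, so $|C\sm B|\ge L$. The weight of such a $C$ appearing in $\g^\infty_B$ is either $\a^{|C\cap B|}$ (infinite case) or $\a^{|C\cap B|}+\r(\hat\oo^i_{C\sm B})$ (finite case), and since $\a\ge 1$ and $\r\le e^{-g_-L}$ one obtains $|w^1(C)/w^2(C)-1|\le e^{-g_-L}$.

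The decisive simplification is the exact cancellation of exterior clusters. For $C\in\CC_B(\o_B)$ with $C\cap B=\es$, the cluster weight $\a^{0}+\r(\hat\oo_C)=1+\r(\hat\oo_C)$ and the $p_t$-factors inside $\nu^\infty_B$, which reduce to $1$ because $|\hat\s_{C\cap B}|^\pm=0$, are both independent of $\o_B$; hence they appear identically in the numerator and the denominator of $\g^\infty_B(f|\hat\oo^i)$ and cancel exactly. After this cancellation only clusters with $C\cap B\neq\es$ remain, and their number is trivially at most $|\o_B|$. For each fixed $\o_B$ the product of the per-cluster deviations over the remaining clusters is therefore bounded by $(1+e^{-g_-L})^{|\o_B|}$, and an analogous bound holds for the ratio $f^{\infty,1}/f^{\infty,2}$ of the kernel factors attached to $f$.

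To conclude I would use the elementary identity $\g^\infty_B(f|\hat\oo^1)-\g^\infty_B(f|\hat\oo^2)=(N^1-N^2)/Z^1+(N^2/(Z^1Z^2))(Z^2-Z^1)$, bound $|N^i|\le\|f\|Z^i$, and estimate $|N^1-N^2|/Z^2$ and $|Z^1-Z^2|/Z^2$ by writing $W^1=W^2\prod(w^1/w^2)$ and integrating; using the Poissonian moment generating function $\E_{P^-_B}[\exp(s|\o_B|)]=\exp(\l_-|B|(e^s-1))$ with $s\le e^{-g_-L}$, this yields a uniform bound of the form $A\|f\|e^{-g_-L}$ with $A=A(\l_+,\l_-,r)$ depending on $r$ only through $|B|\asymp r^d$. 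Since $g_-/(2a)=1/R$, this is exactly the claimed decay. The principal obstacle will be the cancellation step: verifying carefully that exterior clusters factor out of both the cluster product and the color kernel $\nu^\infty_B$ so that the effective count of active clusters is the Poissonian $|\o_B|$ rather than the a priori unbounded number of points that $\hat\oo$ may accumulate in $\bar B\sm B$; once this is in place, the Poissonian concentration of $|\o_B|$ easily dominates the remaining combinatorics.
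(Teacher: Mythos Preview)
Your overall structure---identify the clusters that can feel the perturbation, bound their switch by $e^{-g_-L}$ via the chain-length estimate $|C\sm B|\ge L=\lceil d(B,\L^{\rm c})/(2a)\rceil$, and then control the number of such factors---is exactly the paper's route. The gap is in the step you yourself flag as the principal obstacle: the cancellation of ``exterior'' clusters.

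Your claim that clusters $C\in\CC_B(\o_B)$ with $C\cap B=\es$ contribute a factor independent of $\o_B$, and hence cancel between numerator and denominator, is not correct. Which boundary clusters of $\hat\o_{B^{\rm c}}$ end up with $C\cap B=\es$ depends on whether some point of $\o_B$ lies within $2a$ of them; if so, they are absorbed into a cluster with $C\cap B\neq\es$. Thus the ``exterior'' product varies with $\o_B$ and cannot be pulled out of the $P^-_B$-integral, so it does not cancel against the denominator. Consequently the reduction to a count of at most $|\o_B|$ active clusters, and with it the Poissonian MGF argument, does not go through as written.

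The paper avoids this issue entirely by a packing observation you overlook: distinct clusters are at mutual distance $\ge 2a$, and every $C\in\CC_B(\o_B)$ has a point in $\bar B=B_{r+2a}$, so the total number of clusters in $\CC_B(\o_B)$ is bounded by a constant $K=K(r)$ (a volume/kissing-number bound) \emph{uniformly in $\o_B$ and in the boundary configuration}. This is the point at which your worry about ``the a priori unbounded number of points that $\hat\oo$ may accumulate in $\bar B\sm B$'' dissolves: what matters is the number of \emph{clusters}, not points, and that is geometrically bounded. With $|\CC_B|\le K$, the common factor over clusters confined to $\L$ is $\le 2^K$, and the difference over the at most $K$ clusters reaching $\L^{\rm c}$ (each contributing a factor in $[1,1+e^{-g_-L}]$) is $\le 2^K e^{-g_-L}$. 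Absorbing $2^K$ and the Poisson-void constants into $A(\l_+,\l_-,r)$ yields the claimed bound directly, with no need for cancellation or MGF estimates.
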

Now the proof of Theorem~\ref{Gibbs} is a direct application of the preceding results and will be presented in Subsection~\ref{Proofs of main theorems}.

\medskip
As mentioned above, $\g^\infty$ can also serve as a specification in other regimes, as long as infinite clusters appear with zero probability. This is the main idea in the next subsection.

\subsection{Asq-Gibbsianness}
Note that for sufficiently low-intensites, in the symmetric model, the WRM has a unique Gibbs measure \cite{ChChKo95} while in the asymmetric model this is expected but apparently not proved. 
The following proposition asserts that for all times in the low-intensity regime, $\g^\infty$ is a specification for the time-evolved measures.
\begin{prop}\label{AlmostGibbs_1}
For the symmetric model $\mu_t\in\GG(\g^\infty)$ for all $0<t\le\infty$ in the low-intensity regime. For the asymmetric model $\mu^+_t\in\GG(\g^\infty)$ for all $0<t\le t_G$ in the low-intensity regime.
\end{prop}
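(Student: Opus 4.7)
The plan is to derive the DLR identity $\mu_t(gf)=\mu_t(g\,\g^\infty_\L(f|\cdot))$ by starting from the DLR relation $\mu=\mu\g_\D$ in a large auxiliary volume $\D\supset\L$, applying the independent spin-flip semigroup, invoking Lemma~\ref{Finite} to identify the resulting finite-volume time-evolved conditional expectation, and then letting $\D\uparrow\R^d$. The decisive simplification provided by the low-intensity hypothesis is that $\mu$-a.s., and hence (since spin flip leaves positions unchanged) also $\mu_t$-a.s., every grey-cluster is finite; in particular $\mu_t(\CC^\infty_\L(\o_\L)=\es)=1$, so the infinite-cluster factor in Definition~\ref{GammaInf} collapses to an empty product and $\g^\infty_\L$ reduces to a pure finite-cluster formula that is designed to match the limit of Lemma~\ref{Finite}.

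First I would fix $f\in\FF^b_\L$ and bounded $g\in\FF_{\L^{\rm c}}$. Combining $\mu=\mu\g_\D$ with the spin-flip kernel $p_t$, which factorises across $\D$ and $\D^{\rm c}$, and using Lemma~\ref{Finite} to identify the inner conditional distribution as $\g^{\oo_{\D^{\rm c}}}_\L(f|\hat\oo_{\D\sm\L})$, one obtains
\begin{equation*}
\mu_t(gf)=\mu_t\!\left(g\cdot\g^{\oo_{\D^{\rm c}}}_\L(f|\hat\oo_{\D\sm\L})\right),
\end{equation*}
where the right-hand side still depends on $\D$ through the indicators $\one_{\s_{C\cap\D^{\rm c}}=\pm}$ inside the product representation of $\g^{\oo_{\D^{\rm c}}}_\L$. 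Next I would establish the pointwise limit $\g^{\oo_{\D^{\rm c}}}_\L(f|\hat\oo_{\D\sm\L})\to\g^\infty_\L(f|\hat\oo_{\L^{\rm c}})$ as $\D\uparrow\R^d$ for $\mu_t$-a.e.~$(\hat\oo,\oo_{\D^{\rm c}})$. Pick a typical $\hat\oo$ with no infinite cluster meeting $\bar\L$. For each Poisson sample $\o_\L$ integrated under $P^-_\L$ in the defining ratio, every $C\in\CC_\L(\o_\L)$ of $\o_\L\cup\hat\o_{\L^{\rm c}}$ is again finite, and there is a random $\D_0=\D_0(\o_\L,\hat\o_{\L^{\rm c}})$ such that for all $\D\supset\D_0$ and all such $C$ one has $C\cap\D^{\rm c}=\es$. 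Both indicators in Lemma~\ref{Finite} then hold vacuously, reducing each cluster factor to $\a^{|C\cap\L|}+\r(\hat\oo_{C\sm\L})$, which is precisely the finite-cluster weight in Definition~\ref{GammaInf}; an analogous collapse turns the color-kernel piece $\nu^{\oo_{\D^{\rm c}}}_\L$ into $\nu^\infty_\L$. A uniform bound on $\r$ on finite clusters together with standard Poisson-integrability of the cluster product then feed a dominated-convergence argument inside the ratio, and another dominated-convergence step in the outer $\mu_t$-integration yields $\mu_t(gf)=\mu_t(g\,\g^\infty_\L(f|\cdot))$. Admissibility $\mu_t(\OO^*_\L)=1$ is automatic for the finite-range WRM.

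The hard part will be controlling the cluster-stabilisation volume $\D_0(\o_\L,\hat\o_{\L^{\rm c}})$ uniformly enough over the $P^-_\L$-Poisson sample $\o_\L$ to legitimise the dominated-convergence step. The concern is that inserting the random points $\o_\L$ into $\hat\o_{\L^{\rm c}}$ might merge several previously disjoint clusters into a single much larger one; however, under the low-intensity hypothesis only finitely many $\hat\o_{\L^{\rm c}}$-clusters touch $\bar\L$, each of them is finite, and $|\o_\L|<\infty$ $P^-_\L$-a.s., so any merged cluster stays finite and $\D_0<\infty$ a.s. The three parameter regimes (symmetric at any $0<t\le\infty$, asymmetric at $0<t<t_G$, and the critical case $t=t_G$ in the asymmetric model) are handled uniformly; at $t=t_G$ the single boundary value $g(-1)=0$ could only affect all-minus \emph{infinite} clusters, which are excluded $\mu_t$-a.s.~by the low-intensity hypothesis.
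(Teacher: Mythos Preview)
Your argument is correct and rests on the same core observation as the paper: in the low-intensity regime all clusters touching $\bar\L$ are $\mu$-a.s.\ finite, so for large enough auxiliary volume the finite-volume conditional kernel of Lemma~\ref{Finite} collapses to $\g^\infty_\L$. The technical route differs slightly. You work with the joint law of the time-$0$ and time-$t$ configurations, derive the identity $\int\bar\mu(d\oo,d\hat\oo)\,g(\hat\oo)\,\g^{\oo_{\D^{\rm c}}}_\L(f|\hat\oo_{\D\sm\L})=\mu_t(gf)$, and then pass to the limit pointwise plus dominated convergence. The paper instead computes $\mu_t(\g^\infty_\L(f|\cdot)-f)$ directly: it replaces $\g^\infty$ by $\g^{\rm f}$ on $\{\L\not\leftrightarrow\infty\}$, truncates with the cylinder event $\{\L\not\leftrightarrow B_n^{\rm c}\}$ on which $\g^{\rm f}$ coincides with the finite-volume kernel, uses the finite-volume DLR for the approximants $\mu_{t,\D}$, and finally sends $n\uparrow\infty$. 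Your version has the virtue of making the convergence mechanism explicit; the paper's version avoids carrying the time-$0$ boundary $\oo_{\D^{\rm c}}$ around.

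Two small remarks. First, your stabilisation volume $\D_0$ can be chosen depending on $\hat\oo$ alone, not on the Poisson sample $\o_\L$: the finitely many $\hat\o_{\L^{\rm c}}$-clusters touching $\bar\L$ are a.s.\ finite, so their union $U$ is finite, and any $\D_0\supset\L\cup U$ with a $2a$-buffer works for every $\o_\L$. Thus for $\D\supset\D_0(\hat\oo)$ one has exact equality $\g^{\oo_{\D^{\rm c}}}_\L(f|\hat\oo_{\D\sm\L})=\g^\infty_\L(f|\hat\oo_{\L^{\rm c}})$, and no dominated convergence ``inside the ratio'' is needed. Second, your claim of a ``uniform bound on $\r$ on finite clusters'' is false in the regimes under consideration (symmetric $0<t<\infty$, asymmetric $0<t<t_G$): there $g(m)$ changes sign, so $\r(\hat\oo_C)$ can be exponentially large in $|C|$. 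This does not matter for your argument, since for fixed $\hat\oo$ the relevant cluster sizes are bounded by $|U|$, but the statement as written is incorrect.
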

Moreover, as provided by the following lemma, boundary conditions which do not contain infinite clusters are good points of $\g^\infty$. 
\begin{lem}\label{Almost_Gibbs_Continuity}
For all $\hat\oo\in\OO$ which contain no infinite cluster and all $f\in\FF^b_B$, there exists $\D\Subset\R^d$ such that for all $\D\subset\L$
\begin{equation*} 
\sup_{\oo^1,\oo^2\in\, \OO} 
\bigl| \g^\infty_{B}(f |\hat\oo_{\L\sm B}\oo_{\L^{\rm c}}^1)-\g^\infty_B(f |\hat\oo_{\L\sm B}\oo_{\L^{\rm c}}^2)\bigr|=0.
\end{equation*}
\end{lem}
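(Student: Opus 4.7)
The plan is the following. The only way the two kernels in the statement can differ is if the cluster decomposition $\CC_B(\o_B)$ appearing in the formula of Definition~\ref{GammaInf}, or one of the weights $\r(\hat\oo_{C\sm B})$ or the single-cluster color kernel inside $\nu^\infty_B$, feels the change of boundary in $\L^{\rm c}$. I will therefore choose $\D$ so large that, no matter which $\o_B$ we sample, no cluster touching $B$ ever reaches $\L^{\rm c}$.

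First I exploit the assumption on $\hat\oo$. Since $\hat\oo$ is locally finite and $\bar B = B\oplus B_{2a}(0)$ is bounded, only finitely many clusters of $\hat\o$ meet $\bar B$; by the no-infinite-cluster assumption each of them is finite. Their union
\[
 K(B):=\bigcup\{\hat C\in\CC(\hat\o):\,\hat C\cap\bar B\neq\emptyset\}
\]
is then a finite point set. I choose any bounded $\D\Subset\R^d$ satisfying
\[
 \D\supset (B\cup K(B))\oplus B_{2a}(0).
\]

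The central step is to prove: for every $\L\supset\D$, every $\o_B$, every $\oo^i_{\L^{\rm c}}\in\OO$ and every $C\in\CC_B(\o_B)$ computed with boundary $\hat\oo_{\L\sm B}\oo^i_{\L^{\rm c}}$ one has $C\subset\D$. I argue by contradiction: if $C$ contained a point outside $\D$, there would exist a simple $2a$-adjacent path $y_0,y_1,\dots,y_m$ in $\o_B\cup\hat\o_{\L\sm B}\cup\o^i_{\L^{\rm c}}$ with $y_0\in\bar B$ and $y_m\in\D^{\rm c}$. Let $k$ be the largest index with $y_k\in\D$; then $|y_k-y_{k+1}|<2a$ and $y_{k+1}\in\D^{\rm c}$. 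Because $\L\supset\D$, no point of $\o^i_{\L^{\rm c}}$ lies in $\D$, so the sub-path $y_0,\dots,y_k$ uses only points of $\o_B\cup(\hat\o_{\L\sm B}\cap\D)$. A short induction along this sub-path shows that each $\hat\o$-point on it lies in $K(B)$: the very first $\hat\o$-point after exiting $\o_B\subset B$ is within $2a$ of $B$ and hence in $\bar B\cap\hat\o\subset K(B)$, and thereafter any further $\hat\o$-point is a $2a$-neighbour in $\hat\o$ of a $K(B)$-point, hence in the same $\hat\o$-cluster and thus in $K(B)$; re-entries into $B$ restart the same argument. Consequently $y_k\in \o_B\cup K(B)\subset B\cup K(B)$, which by construction of $\D$ is at distance strictly greater than $2a$ from $\D^{\rm c}$. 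This contradicts $|y_k-y_{k+1}|<2a$ with $y_{k+1}\in\D^{\rm c}$, establishing the containment $C\subset\D\subset\L$.

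The rest is bookkeeping. The containment shows that the family $\CC_B(\o_B)$ and every cluster $C$ in it depend only on $\o_B$ and on $\hat\o_{\L\sm B}$, not on $\oo^i_{\L^{\rm c}}$. Each such $C$ is a union of $\o_B$ with finitely many finite $\hat\o$-clusters inside $K(B)$, so it is finite; in particular $\CC^\infty_B(\o_B)=\emptyset$ and $\CC^{\rm f}_B(\o_B)=\CC_B(\o_B)$. The switch weight $\r(\hat\oo_{C\sm B})$ depends only on the colors of $\hat\oo$ on $C\sm B\subset\L\sm B$, which are prescribed by $\hat\oo_{\L\sm B}$. Exactly the same reasoning shows that $\nu^\infty_B$, and hence $f^\infty$, are $i$-independent functions of $\o_B$. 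Substituting into the formula of Definition~\ref{GammaInf} with $\L$ replaced by $B$ yields identical numerators and denominators for $i=1$ and $i=2$, proving the claimed equality with supremum zero.

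The main technical obstacle is the path-tracking induction in step three; it relies crucially on the double buffer encoded in $\D\supset(B\cup K(B))\oplus B_{2a}(0)$, which both forbids a direct jump from $B$ to $\L^{\rm c}$ and forbids a jump from $K(B)$ to $\L^{\rm c}$, thereby confining every cluster intersecting $\bar B$ to the deterministic set $B\cup K(B)\subset\D$, uniformly in the random part $\o_B$ and in the outer perturbation $\oo^i_{\L^{\rm c}}$.
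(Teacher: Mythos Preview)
Your proof is correct and follows the same approach as the paper's: choose $\D$ to contain (with a $2a$-buffer) the finitely many finite $\hat\o$-clusters meeting $\bar B$, so that for any $\L\supset\D$ no cluster of the combined configuration touching $\bar B$ can reach $\L^{\rm c}$, whence all ingredients of $\g^\infty_B$ in Definition~\ref{GammaInf} are independent of $\oo^i_{\L^{\rm c}}$. The paper gives only a two-line sketch of this (``take $\L$ large enough that all these clusters are fully contained''); your path-tracking induction and the explicit buffer $\D\supset(B\cup K(B))\oplus B_{2a}(0)$ spell out precisely why the escape is impossible.
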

%

The previous two results directly imply the asq-Gibbsianness in Theorem~\ref{AlmostGibbs} and the proof is presented in Subsection~\ref{Proofs of main theorems}.

\medskip
As for the critical time in the asymmetric regime, first note that on infinite clusters, magnetizations are biased away from minus one. More precisely, let us define $m_t=p_t(+,+)-p_t(+,-)=e^{-2t}>0$ and for all $\e>0$ 
\begin{equation*}
\begin{split}
\OO^{\e}=\{\hat\oo\in\OO: \liminf_{n\uparrow\infty}m(\hat\oo_{C\cap B_n})\ge \e\text{ for all infinite clusters }C\text{ of } \hat\oo\}. 
\end{split}
\end{equation*}
Then we have the following result.
\begin{lem}\label{Almost_Gibbs_Crit_Conc}
It is a fact that $\mu^+_{t}(\OO^{m_t})=1$.
\end{lem}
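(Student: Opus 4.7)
The plan is to exploit the product structure of the spin-flip dynamics and the strong law of large numbers (SLLN) applied to each infinite cluster separately. Since the dynamics only acts on colors, the grey configuration under $\mu^+_t$ has the same law as under $\mu^+$. The WRM hard-core constraint forces each cluster of the grey configuration to be monochromatic at time zero, and by the plus-extremality of $\mu^+$ together with an FKG/monotonicity argument (realising $\mu^+$ as the limit of the finite-volume states with $+$-boundary, under which any cluster that reaches the boundary is forced to be $+$), $\mu^+$-almost surely every infinite cluster $C$ of $\o$ carries the color $+$ at time zero.

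Fix a realisation of $\o$ together with the a.s.\ event that every infinite cluster is plus at time zero, and condition on this. Since the dynamics flips each spin independently, the time-$t$ colors $(\hat\s_x)_{x\in C}$ on any infinite cluster $C$ are then i.i.d., taking the value $+1$ with probability $p_t(+,+)$ and $-1$ with probability $p_t(+,-)$, and hence of common mean $p_t(+,+)-p_t(+,-)=e^{-2t}=m_t$. As $C$ is infinite and locally finite, $|C\cap B_n|\uparrow\infty$, and the sets $C\cap B_n$ form an increasing exhaustion of $C$; enumerating the points of $C$ compatibly with this exhaustion, the classical SLLN yields
$$m(\hat\oo_{C\cap B_n})=\frac{1}{|C\cap B_n|}\sum_{x\in C\cap B_n}\hat\s_x\;\longrightarrow\; m_t\quad\text{a.s.},$$
and in particular $\liminf_{n\uparrow\infty} m(\hat\oo_{C\cap B_n})\ge m_t$.

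Since $\o$ is locally finite it contains at most countably many connected components, hence at most countably many infinite clusters $C_1,C_2,\dots$. The event $\{\oo\notin\OO^{m_t}\}$ is contained in the countable union over $j$ of the SLLN exceptional sets associated with $C_j$, each of which has zero conditional probability given $\o$. Integrating against the law of $\o$ under $\mu^+_t$ then yields $\mu^+_t(\OO^{m_t})=1$. The only mildly delicate point is the a.s.\ plus-coloring of every infinite cluster under $\mu^+$; once this is in hand, everything reduces to a straightforward conditional SLLN on an i.i.d.\ sequence.
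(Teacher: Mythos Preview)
Your proof is correct and follows essentially the same approach as the paper: both argue that under $\mu^+$ every infinite cluster is $+$-colored at time zero, and then apply the strong law of large numbers to the i.i.d.\ time-evolved spins on each such cluster. Your write-up is slightly more detailed (explicitly handling the countability of infinite clusters and the enumeration needed for the SLLN along $C\cap B_n$), but the argument is the same.
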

The next results in particular implies that $\OO^{m_{t_G}}\subset\OO(\g^\infty)$. 
\begin{prop}\label{Almost_Gibbs_Continuity_Crit}
There exists finite $A=A(\l_+,\l_-,r)$ such that for all observables $f\in\FF^b_B$ and configurations $\hat\oo\in\OO^{m_{t_G}}$ we have
\begin{equation*} 
\sup_{\oo^1,\oo^2\in\, \OO} 
\bigl| \g^\infty_{B}(f |\hat\oo_{\L\sm B}\oo_{\L^{\rm c}}^1)-\g^\infty_B(f |\hat\oo_{\L\sm B}\oo_{\L^{\rm c}}^2)\bigr| \leq A\Vert f \Vert e^{- d(B,\L^{\rm c})/(2a)}.
\end{equation*}
\end{prop}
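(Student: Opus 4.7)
The proof follows the template of Proposition~\ref{Gibbs_Continuity}, with a modification at the critical time to replace the uniform positive bound $g_-$ by an effective positivity coming from the hypothesis $\hat\oo\in\OO^{m_{t_G}}$. At $t=t_G$, recall that $g(m)=c(1+m)$ with $c=\log(\l_+/\l_-)>0$; so $g(-1)=0$ and the switch at a finite cluster $C$ in $\o_B\hat\oo_{\L\sm B}\oo^i_{\L^{\rm c}}$ reads
\[
\r(\tilde\oo^i_{C\sm B})=\Big(\frac{\l_-}{\l_+}\Big)^{2|\s^+_{C\sm B}|},
\]
where $|\s^+_{C\sm B}|$ counts the plus-spins of $\tilde\oo^i_{C\sm B}=\hat\oo_{(C\sm B)\cap(\L\sm B)}\oo^i_{C\cap\L^{\rm c}}$. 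Unlike the asymmetric super-critical case, the decay of $\r$ is now controlled by the \emph{number} (rather than a fixed rate in the size) of plus-colored particles along $C\sm B$.

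First I would write $\g^\infty_B(f\mid\hat\oo_{\L\sm B}\oo^i_{\L^{\rm c}})=N^i/D^i$ as in Definition~\ref{GammaInf} and identify the clusters $C\in\CC_B(\o_B)$ whose weight depends on $\oo^i_{\L^{\rm c}}$: these are exactly the clusters of $\o_B\hat\oo_{\L\sm B}\oo^i_{\L^{\rm c}}$ that reach $\L^{\rm c}$, since clusters confined to $\L$ contribute identically in both scenarios. Such a cluster contains a chain of overlapping disks spanning from $B$ to $\L^{\rm c}$, and hence at least $n:=\lfloor d(B,\L^{\rm c})/(2a)\rfloor$ particles of $\hat\oo_{\L\sm B}$ on a single $\hat\oo_{\L\sm B}$-connected-component. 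In the infinite-cluster case the weight $\a^{|C\cap B|}$ does not depend on the conditioning and drops out of the difference; the only remaining contribution is the switch above in the finite case.

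The crucial step is to produce, for all finite bridging clusters, a lower bound of the form
\[
|\s^+_{C\sm B}|\ \ge\ \kappa\,\frac{d(B,\L^{\rm c})}{2a}\ -\ A_1(\hat\oo),
\]
with $\kappa=\kappa(\l_+,\l_-)>0$ and $A_1(\hat\oo)<\infty$. Since inside $\L\sm B$ only $\hat\oo_{\L\sm B}$ provides adjacencies, the spanning chain lies in a single $\hat\oo_{\L\sm B}$-component, which is part of either a finite or an infinite $\hat\oo$-cluster. For a fixed $\hat\oo$ there are only finitely many finite $\hat\oo$-clusters meeting any given bounded region, and each is of bounded extent, so once $d(B,\L^{\rm c})$ exceeds a threshold depending on $\hat\oo$, any bridging chain must be supported (up to boundedly many points) on infinite clusters of $\hat\oo$; the liminf condition defining $\OO^{m_{t_G}}$ then delivers a plus-fraction of at least $(1+m_{t_G})/2=\l_+/(\l_++\l_-)$ along that portion, yielding the claim with $\kappa=\l_+/(\l_++\l_-)$. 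Consequently $\r(\tilde\oo^i_{C\sm B})\le C_0(\l_+,\l_-)\,e^{-d(B,\L^{\rm c})/(2a)}$ after absorbing the additive $A_1(\hat\oo)$ into the prefactor and rescaling so that $2c\kappa\ge 1$.

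With this bound in hand, the remainder of the estimate mirrors Proposition~\ref{Gibbs_Continuity}: expand $N^1/D^1-N^2/D^2$ as a telescoping sum over the finite bridging clusters for a given $\o_B$, factor out the common weights of the non-bridging clusters, and bound the residue by a sum of switches. A cluster-counting estimate for $P^-_B$ inside the bounded box $B$ then yields the prefactor $A\|f\|$ with $A=A(\l_+,\l_-,r)$. The main obstacle is the previous step, namely extracting from the purely \emph{liminf} condition on infinite clusters a quantitative lower bound on the plus-count along every bridging chain that is insensitive to the potentially very large finite $\hat\oo$-clusters that may sit inside the annulus $\L\sm B$; here, trading the additive constant $A_1(\hat\oo)$ against a suitably rescaled exponential rate is the key technical manoeuvre that makes the constant $A$ depend on $\l_+,\l_-,r$ alone.
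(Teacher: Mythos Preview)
Your approach is essentially the paper's: reuse the estimates of Proposition~\ref{Gibbs_Continuity} and replace the uniform bound $\r\le e^{-g_- d(B,\L^{\rm c})/(2a)}$ (which degenerates since $g_-=0$ at $t_G$) by a bound coming from the plus-count along bridging clusters, supplied by the liminf condition in $\OO^{m_{t_G}}$. Your extra care in arguing why a bridging $\hat\oo_{\L\sm B}$-component must, for large $\L$, sit inside an infinite $\hat\oo$-cluster is actually more explicit than the paper, which simply asserts the key inequality
\[
\r(\hat\oo^{1,2}_{C\sm B})\le\a^{-|\o_{C\cap \L\sm B}|(1+m(\hat\oo_{C\cap \L\sm B}))}\le\a^{-(1+m_{t_G}/2)d(B,\L^{\rm c})/(2a)}
\]
``for sufficiently large $\L$ uniformly in all finitely many infinite clusters attached to $B$''.

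There is, however, a genuine gap in your final step. You cannot absorb the additive $A_1(\hat\oo)$ into a prefactor depending only on $(\l_+,\l_-,r)$: for any candidate $A$ one can pick $\hat\oo\in\OO^{m_{t_G}}$ whose infinite cluster starts with an arbitrarily long all-minus segment near $B$, so that for suitably chosen $\L$ the switch equals $1$ and the left-hand side is of order one while $e^{-d(B,\L^{\rm c})/(2a)}$ is arbitrarily small. (The same remark applies to your ``rescaling so that $2c\kappa\ge1$'': since $2\kappa\log\a=\tfrac{2\l_+}{\l_++\l_-}\log\tfrac{\l_+}{\l_-}$ can be arbitrarily small, no fixed exponential rate absorbs this.) The paper does not attempt this absorption; its bound is explicitly asserted only ``for sufficiently large $\L$'', with the threshold depending on $\hat\oo$ through the finitely many infinite clusters touching $B$. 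That is exactly what your argument delivers before the absorption step, and it is all that is needed to conclude $\OO^{m_{t_G}}\subset\OO(\g^\infty)$ and hence asq-Gibbsianness. So keep your lower bound $|\s^+_{C\sm B}|\ge\kappa\, d(B,\L^{\rm c})/(2a)-A_1(\hat\oo)$, but state the conclusion as holding for $\L$ large depending on $\hat\oo$ (equivalently, with an $\hat\oo$-dependent prefactor) rather than with a universal $A$.
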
 

The previous results directly imply the asq-Gibbsianness in Theorem~\ref{AlmostGibbs_Crit} and the proof is presented in Subsection~\ref{Proofs of main theorems}.
%

\medskip
In the next subsections we discuss the non-q-Gibbsian and non-asq-Gibbsian regimes. The main task here is to transfer knowledge of bad points for a given version of finite-volume conditional probabilities on positive-measure subsets of configurations to any other specification.

\subsection{Non-asq-Gibbsianness}\label{Non-asq-Gibbsianness}
In this subsection we assume parameter regimes where at least one of two mechanisms is available. The first one involves color perturbations. More precisely, if $g(m)$
can have positive and negative signs as $m\in[-1,1]$, discontinuities can be produced by changing colors in a large but finite cluster. 
Existence of infinite clusters can always be assumed when analyzing asq-Gibbsianness and is guaranteed almost surely in the high-intensity regime. 
The second mechanism works for $t=\infty$ in the symmetric case, where discontinuities can be produced by means of spatial perturbations. 
Let us start by defining probability kernels similar to $\g^\infty$, but without the infinite components.

\begin{defn}\label{GammaFin}
We define for $\L\Subset\R^d$
\begin{equation*}
\begin{split}
&\g^{\rm f}_\L(f|\hat\oo_{\L^{\rm c}})=\frac{\int P^-_\L(d \o_\L)f^{\rm f}(\o_\L)\prod_{C\in\CC^{\rm f}_{\L}(\o_\L)}\big(\a^{|C\cap \L|}+\r(\hat\oo_{C\sm \L})\big)}
{\int P^-_\L(d \o_\L)\prod_{C\in\CC^{\rm f}_{\L}(\o_\L)}\big(\a^{|C\cap \L|}+\r(\hat\oo_{C\sm \L})\big)}
\end{split}
\end{equation*}
where $f^{\rm f}(\o_\L)=\nu^{\rm f}_\L(f(\o_\L,\cdot)|\hat\oo_{\L^{\rm c}},\o_\L)$ with  
\begin{align*}
&\nu^{\rm f}_\L(\hat\s_{\o_\L}|\hat\oo_{\L^{\rm c}},\o_\L)=\frac{\prod_{C\in\CC^{\rm f}_\L(\o_\L)}\big(\a^{|C\cap \L|}\frac{p_t(+,+)^{|\hat\s_{C\cap \L}|^+}}{p_t(+,-)^{-|\hat\s_{C\cap \L}|^-}}+\frac{p_t(-,+)^{|\hat\s_{C\cap \L}|^+}}{p_t(-,-)^{-|\hat\s_{C\cap \L}|^-}}\r(\hat\oo_{C\sm \L})\big)}
{\prod_{C\in\CC^{\rm f}_\L(\o_\L)}\big(\a^{|C\cap \L|}+\r(\hat\oo_{C\sm \L})\big)}.
\end{align*}
Further we denote $\g^{\rm f}=(\g^{\rm f}_\L)_{\L\Subset\R^d}$.
\end{defn} 
Note that we do not claim that $\g^{\rm f}$ is consistent, but
we show that $\g^{\rm f}$ is a representation of the conditional probabilities of $\mu_t$ away from the infinite components. 
\begin{prop}\label{NonGibbs_2}
Let $\mu\in\GG(\g^{\rm sym})$ for the symmetric model or $\mu=\mu^+$ for the asymmetric model. Then for all 
$0<t\le\infty$ and $\mu_t$-almost all $\hat\oo\in\OO$ we have 
\begin{equation*}
\begin{split}
&\mu_t( \cdot |\hat\oo_{B^{\rm c}})\one_{B \not \leftrightarrow \infty}(\o_{B^{\rm c}})=\g^{\rm f}_B(\cdot|\hat\oo_{B^{\rm c}})\one_{B \not \leftrightarrow \infty}(\o_{B^{\rm c}}).
\end{split}
\end{equation*}
\end{prop}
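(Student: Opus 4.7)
The plan is to combine Lemma~\ref{Finite} with a localisation argument valid on $\{B\not\leftrightarrow\infty\}$ and then pass to the infinite volume by a reversed-martingale step. Fix an exhausting sequence $B\subset\L_n\uparrow\R^d$. Using DLR for $\mu$ in the volume $\L_n$ together with the site-wise independence of the spin flips, for any $f\in\FF^b_B$ one obtains
\begin{align*}
\mu_t\bigl(f\,|\,\hat\oo_{\L_n\sm B},\hat\oo_{\L_n^{\rm c}}\bigr)=\int\bar\mu_n(d\tilde\oo_{\L_n^{\rm c}}\,|\,\hat\oo_{\L_n^{\rm c}})\;\g_B^{\tilde\oo_{\L_n^{\rm c}}}(f\,|\,\hat\oo_{\L_n\sm B}),
\end{align*}
where $\bar\mu_n$ denotes the conditional law of the time-zero outer colouring given the observed time-$t$ outer colouring (grey positions are preserved by spin flips), and the inner kernel is the one from Lemma~\ref{Finite}.

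Next, I would localise on $\{B\not\leftrightarrow\infty\}$. Since by assumption $\hat\o_{B^{\rm c}}$ has no infinite cluster meeting the $2a$-thickening $\bar B$ of $B$, the finitely many clusters of $\hat\o_{B^{\rm c}}$ that touch $\bar B$ are all bounded, hence contained in some compact set $D=D(\hat\oo)\Subset\R^d$. For any $n$ with $d(D\cup B,\L_n^{\rm c})>2a$, every cluster $C\in\CC_B(\o_B)$ computed in the grey configuration $\o_B\cup\hat\o_{B^{\rm c}}$ is entirely contained in $\L_n$, so $C\cap\L_n^{\rm c}=\es$ regardless of $\o_B$. In the Lemma~\ref{Finite} expression both colour indicators $\one_{\s_{C\cap\L_n^{\rm c}}=\pm}$ are therefore vacuously satisfied; each cluster factor collapses to $\a^{|C\cap B|}+\r(\hat\oo_{C\sm B})$; the integrand becomes independent of $\tilde\oo_{\L_n^{\rm c}}$, trivialising the integration against $\bar\mu_n$; and since $\CC_B(\o_B)=\CC^{\rm f}_B(\o_B)$ on this event, the resulting expression coincides term-by-term with $\g^{\rm f}_B(f\,|\,\hat\oo_{B^{\rm c}})$ of Definition~\ref{GammaFin}.

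To finish, the reversed-martingale convergence theorem gives, along $\L_n\uparrow\R^d$, the $\mu_t$-almost sure convergence $\mu_t(f\,|\,\hat\oo_{\L_n\sm B},\hat\oo_{\L_n^{\rm c}})\to\mu_t(f\,|\,\hat\oo_{B^{\rm c}})$. Intersecting the convergence set with $\{B\not\leftrightarrow\infty\}$, on which the previous step forces the pre-limit to equal $\g^{\rm f}_B(f\,|\,\hat\oo_{B^{\rm c}})$ for all $n$ sufficiently large, yields the asserted identity; since $\FF^b_B$ is separating, the equality extends to the kernels themselves on this event.

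The main technical point is the two-level conditioning inherent in the statement: Lemma~\ref{Finite} presupposes a \emph{deterministic} time-zero outer configuration, whereas $\mu_t(\cdot\,|\,\hat\oo_{B^{\rm c}})$ requires averaging over that time-zero colouring conditional on its (observed) time-$t$ image — a highly non-explicit law whose intricate structure is precisely what makes the Gibbs question for $\mu_t$ delicate in the percolating regime. The mechanism that defuses this is exactly the restriction to $\{B\not\leftrightarrow\infty\}$: it makes the cluster formula measurable with respect to a bounded portion of the boundary, so that the intractable $\bar\mu_n$ factors out of the limit and need never be analysed explicitly.
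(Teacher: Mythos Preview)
Your argument is essentially correct, with two minor imprecisions that do not derail it. First, the mixing law $\bar\mu_n$ should be the conditional law of the time-zero colouring $\tilde\oo_{\L_n^{\rm c}}$ given the \emph{full} time-$t$ exterior $\hat\oo_{B^{\rm c}}$, not merely given $\hat\oo_{\L_n^{\rm c}}$: the time-zero colours outside $\L_n$ are correlated, through $\mu$, with the time-$t$ colours in $\L_n\sm B$. This inaccuracy is harmless here precisely because on $\{B\not\leftrightarrow\infty\}$ and for $n$ large the integrand $\g_B^{\tilde\oo_{\L_n^{\rm c}}}(f\,|\,\hat\oo_{\L_n\sm B})$ is constant in $\tilde\oo_{\L_n^{\rm c}}$, so any probability measure in that slot produces the same value. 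Second, the reverse-martingale step is vacuous: since $(\L_n\sm B)\cup\L_n^{\rm c}=B^{\rm c}$ for every $n$ with $B\subset\L_n$, the conditioning $\sigma$-algebra does not vary with $n$ and the sequence is constant; no limit is required.

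The paper's route is close in spirit but structurally different: it argues at the integrated (DLR) level rather than pointwise. Following the proof of Proposition~\ref{AlmostGibbs_1}, one shows $\mu_t\big((\g^{\rm f}_B(f|\cdot)-f)\,g\,\one_{B\not\leftrightarrow B_n^{\rm c}}\big)=0$ for test functions $g\in\FF^b_{B^{\rm c}}$ by approximating $\mu_t$ through the finite-volume measures $\mu_{t,\D}$, then using that on $\{B\not\leftrightarrow B_n^{\rm c}\}$ one has $\g^{\rm f}_B=\g^{\cdot_\D}_B$, so Lemma~\ref{Finite} closes the identity; finally one lets $n\to\infty$. Your tower-property approach bypasses the infinite-volume approximation of $\mu_t$ at the cost of invoking the implicit posterior $\bar\mu_n$; the paper's approach never names that posterior but relies on the weak limit $\mu_{t,\D}\to\mu_t$. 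Both hinge on exactly the same localisation: the kernel of Lemma~\ref{Finite} is blind to $\L_n^{\rm c}$ once every cluster touching $\bar B$ is trapped inside $\L_n$.
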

Note that $\mu_t( B \not \leftrightarrow \infty)=\mu( B \not \leftrightarrow \infty)>0$ for any $\mu\in\GG(\g)$.
The next proposition asserts that $\g_B^{\rm f}$ is discontinuous at configurations which do have an infinite cluster communicating with $B$. More precisely, we show that $\g^{\rm f}_B$ is discontinuous even under color perturbation for times smaller then the critical time. 
In the sequel when we write $\oo^\pm$, we assume $\o^\pm=\o$ and $\s^\pm_{\o}=\pm_{\o}$, in words, $\oo^\pm$ is a configuration with only plus or only minus colors. 
\begin{prop}\label{NonGibbs} 
Let $0<t< \infty$ for the symmetric model or $0<t<t_G$ for the asymmetric model. Then for all $\L\Subset\R^d$ and all $L>0$, there exists $N\in \N$, $f\in\FF^b_B$ and $\d>0$ such that for all $n\ge N$, 
\begin{equation*} 
\inf_{\hat\oo\in\{B \leftrightarrow B_n^{\rm c}\}:\, |\o_{\L\setminus B}|<L}\bigl|\g^{\rm f}_{B}(f |\hat\oo_\L\oo^+_{B_n\sm\L})-\g^{\rm f}_B(f |\hat\oo_\L\oo^-_{B_n\sm \L})\bigr|>\d.
\end{equation*}
\end{prop}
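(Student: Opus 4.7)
The plan is to exploit the sign change of $g(m) = \log\alpha + m\log\frac{1+e^{-2t}}{1-e^{-2t}}$: under our assumptions one has $g(+1)>0$ and $g(-1)<0$. Hence on any grey cluster $C$ whose $(B_n\setminus\L)$-part has macroscopic size, switching the boundary colors from $\oo^+$ to $\oo^-$ drives the switch $\r(\hat\oo_{C\setminus B})$ from exponentially small to exponentially large in the size of $C$.

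The first key step is the following uniform magnetization estimate. For any $\hat\oo$ in the conditioning set, fix a grey cluster $C^{\rm bdy}\subset\o_{B^{\rm c}}$ communicating $B$ with $B_n^{\rm c}$. Since consecutive points of a cluster lie within $2a$, one has $|C^{\rm bdy}\cap(B_n\setminus\L)| \geq c_1 n$ uniformly in $\hat\oo$ for some $c_1 = c_1(\L,a,d)>0$ once $n$ exceeds the diameter of $\L$. Combined with the restriction $|\o_{\L\setminus B}| < L$, the magnetization of $\hat\oo_\L\oo^\pm_{B_n\setminus\L}$ restricted to $C^{\rm bdy}$ is $\pm 1 + O(L/n)$, so for $n$ large $g$ at this magnetization has the sign of $g(\pm 1)$ with absolute value at least some $c_2>0$. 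Consequently $\r(\hat\oo_{C^{\rm bdy}}) \leq e^{-c_1 c_2 n}$ in the $\oo^+$ case and $\r(\hat\oo_{C^{\rm bdy}}) \geq e^{c_1 c_2 n}$ in the $\oo^-$ case, with constants uniform in $\hat\oo$.

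The second step is to choose the observable $f(\hat\oo_B) = \one_{|\o_B|=1,\ \hat\s_{x_1} = +}$, with $x_1$ the unique point of $\o_B$, and take the limit $n\to\infty$. Splitting the products $\prod_{C\in\CC^{\rm f}_B(\o_B)}(\alpha^{|C\cap B|} + \r(\hat\oo_{C\setminus B}))$ appearing in both numerator and denominator of $\g^{\rm f}_B$ into the factor for the cluster $C^*(\o_B)$ containing $C^{\rm bdy}$ (possibly enlarged by $\o_B$-points) and the remaining finite-cluster factors, the asymptotics from the previous step give explicit pointwise limits: in the $\oo^+$ limit the $C^*$-factor collapses to $\alpha^{|C^*\cap B|}$ with coloring kernel $p_t(+,\cdot)$ in $\nu^{\rm f}_B$, while in the $\oo^-$ limit, after pulling out the dominating $\r(\hat\oo_{C^{\rm bdy}})$ from both numerator and denominator, it reduces to $1$ with coloring kernel $p_t(-,\cdot)$. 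A direct computation then yields two limiting values of $\g^{\rm f}_B(f|\cdot)$ differing by a strictly positive quantity proportional to $p_t(+,+)-p_t(-,+) = e^{-2t}>0$, with an additional positive $(\alpha-1)$-dependent contribution in the asymmetric case.

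The main obstacle will be the uniformity in $\hat\oo$, since the remaining finite-cluster factors depend on the boundary configuration and are themselves affected by the color perturbation: any cluster in $\CC^{\rm f}_B$ with points in $B_n\setminus\L$ has its own magnetization, and hence its switch, shifted by the change $\oo^\pm$. The uniformization relies on the fact that these extra factors enter the ratios in $\g^{\rm f}_B(f|\cdot)$ symmetrically in numerator and denominator and with perturbative effect bounded uniformly once the clusters are bounded in size, while the $C^{\rm bdy}$-driven effect gives a limiting gap depending only on $|B|$, $\l_\pm$, and $t$. The uniform exponential rates from the first step then allow choosing $N$ large enough that the $n$-uniform approximation error is smaller than half the limiting gap, yielding the required $\d>0$ for all $n\geq N$ and all $\hat\oo$ in the conditioning set.
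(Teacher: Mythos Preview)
Your approach mirrors the paper's: both control the magnetization on the boundary-communicating cluster to force the switch $\rho$ from exponentially small to exponentially large, then exhibit a test function detecting the resulting gap. The paper treats the two regimes separately, taking $f=\one_{\emptyset_B}$ in the asymmetric case and $f=\one_{+_{\omega_B}}$ in the symmetric case, whereas you propose a single color-sensitive observable $f=\one\{|\omega_B|=1,\ \hat\sigma_{x_1}=+\}$. This is a legitimate variant, and your magnetization/switch analysis in the first step is essentially identical to the paper's.

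The assertion in your last paragraph that the limiting gap depends only on $|B|,\lambda_\pm,t$ needs more care. With your observable, the nonzero contribution to $\gamma^{\rm f}_B(f|+)-\gamma^{\rm f}_B(f|-)$ arises solely from configurations where the single point $x_1\in\omega_B$ lies within $2a$ of $C^{\rm bdy}$; the gap is therefore proportional to $\lambda_-e^{-\lambda_-|B|}\cdot\bigl|B\cap\bigcup_{y\in C^{\rm bdy}}B_{2a}(y)\bigr|$. This intersection volume is \emph{not} bounded below by the stated hypotheses: a cluster can graze $\bar B$ through an arbitrarily thin lens while still satisfying $\hat\oo\in\{B\leftrightarrow B_n^{\rm c}\}$ and $|\omega_{\Lambda\setminus B}|<L$. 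The paper's observables share this feature (the nonzero contribution comes from $\{|C'\cap B|\geq 1\}$), and the paper's own step ``$\alpha^{|C'\cap B|}\geq\alpha$'' is applied somewhat loosely across the full $\omega_B$-integral. So this is a shared subtlety rather than a defect unique to your route, but your sketch should not claim $\hat\oo$-independence of the gap without addressing it explicitly.
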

For the symmetric case at $t=\infty$, the Gibbs measure $\mu^+_\infty=\mu^-_\infty$ is color-blind and $\g^{\rm \infty}$ is a specification. 
\begin{prop}\label{Gibbs_CritSym_Spec}
In the symmetric high-intensity regime $\mu^+_\infty\in\GG(\g^{\rm \infty})$.
\end{prop}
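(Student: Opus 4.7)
The plan is to mirror the strategy of Proposition~\ref{Gibbs_Specify}, verifying the DLR equation $\mu^+_\infty(f)=\int \mu^+_\infty(d\hat\oo)\g^\infty_\L(f|\hat\oo_{\L^c})$ for $\L\Subset\R^d$ and $f\in\FF^b_\L$ by passing to the limit $\Delta\uparrow\R^d$ in the finite-volume identity supplied by Lemma~\ref{Finite}. Write $\mu^{+_{\Delta^c}}_{\infty,\Delta}$ for the finite-volume time-evolved WRM in $\Delta$ with the all-plus boundary condition $+_{\Delta^c}$; by the usual FKG-based construction of $\mu^+$ followed by the (continuous on local observables) spin-flip operation, $\mu^{+_{\Delta^c}}_{\infty,\Delta}\to\mu^+_\infty$ locally. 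The strategy rests on the drastic simplification of both sides of this identity in the symmetric regime at $t=\infty$.

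With $\l_+=\l_-=\l$ one has $\a=1$, while at $t=\infty$ both $p_\infty(\cdot,\cdot)\equiv 1/2$ and $g\equiv 0$, so $\r\equiv 1$. Substituting into Definition~\ref{GammaInf}, every finite cluster contributes a weight $\a^{|C\cap\L|}+\r=2$, every infinite cluster a weight $\a^{|C\cap\L|}=1$, and $\nu^\infty_\L$ reduces to the uniform color product $(1/2)^{|\hat\s_{\o_\L}|}$. Hence $\g^\infty_\L(d\hat\oo_\L|\hat\oo_{\L^c})$ factors as a spatial kernel proportional to $P^\l_\L(d\o_\L)\cdot 2^{|\CC^{\rm f}_\L(\o_\L\o_{\L^c})|}$ depending only on the grey boundary $\o_{\L^c}$, tensored with the uniform-color product measure on $\o_\L$. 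Analogously, applying Lemma~\ref{Finite} with inner box $\L$, outer box $\Delta$ and boundary $+_{\Delta^c}$, the indicator $\one_{\s_{C\cap\Delta^c}=+}+\one_{\s_{C\cap\Delta^c}=-}$ equals $2$ when $C\cap\Delta^c=\es$ and $1$ otherwise, and $\nu^{+_{\Delta^c}}_\L$ again collapses to the uniform product, yielding
$$\g^{+_{\Delta^c}}_\L(f|\hat\oo_{\Delta\sm\L})=\frac{\int P^\l_\L(d\o_\L)\,\bar f(\o_\L)\,2^{M_\Delta(\o_\L,\hat\o_{\Delta\sm\L})}}{\int P^\l_\L(d\o_\L)\,2^{M_\Delta(\o_\L,\hat\o_{\Delta\sm\L})}},$$
where $M_\Delta$ counts the clusters in $\CC_\L(\o_\L\hat\o_{\Delta\sm\L}+_{\Delta^c})$ disjoint from $\Delta^c$ and $\bar f$ is the uniform color-average of $f$.

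For any fixed infinite-volume $\hat\oo_{\L^c}$, every finite cluster of $\o_\L\hat\o_{\L^c}$ eventually lies inside $\Delta$ while every infinite one eventually intersects $\Delta^c$; hence $M_\Delta\uparrow|\CC^{\rm f}_\L(\o_\L\hat\o_{\L^c})|$ pointwise. The bound $2^{M_\Delta}\le 2^{|\o_\L|}$ combined with $\int P^\l_\L(d\o_\L)2^{|\o_\L|}=e^{\l|\L|}<\infty$ permits dominated convergence, giving $\g^{+_{\Delta^c}}_\L(f|\hat\oo_{\Delta\sm\L})\to\g^\infty_\L(f|\hat\oo_{\L^c})$ pointwise in $\hat\oo$. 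Together with the uniform bound $|\g^{+_{\Delta^c}}_\L(f|\cdot)|\le\|f\|$ and the local convergence $\mu^{+_{\Delta^c}}_{\infty,\Delta}\to\mu^+_\infty$ in the finite-volume DLR of Lemma~\ref{Finite}, this delivers the infinite-volume DLR and hence $\mu^+_\infty\in\GG(\g^\infty)$. The main obstacle is precisely this last exchange of limits: the test function $\hat\oo\mapsto\g^{+_{\Delta^c}}_\L(f|\hat\oo_{\Delta\sm\L})$ depends on $\hat\oo_{\Delta\sm\L}$ whose window grows with $\Delta$, and the relevant cluster set is inherently non-local. To handle it, one approximates the kernel by its restriction to a large but $\Delta$-independent spatial window around $\L$, up to an error controlled by the $\mu^+$-probability that a finite cluster touching $\L$ exceeds the window; this probability tends to zero uniformly by the continuum-percolation estimates collected in Section~\ref{Ap}, after which bounded convergence completes the argument.
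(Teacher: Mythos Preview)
Your strategy mirrors the paper's: both follow the template of Proposition~\ref{Gibbs_Specify}, passing through the finite-volume identity of Lemma~\ref{Finite} and exploiting the drastic simplifications $\alpha=1$, $\rho\equiv1$, $p_\infty(\cdot,\cdot)\equiv1/2$ at $t=\infty$ in the symmetric regime. Your reduction of both kernels to ratios of $\int P^\l_\L\,2^{(\cdot)}$ and the pointwise (indeed monotone) convergence $M_\Delta\uparrow|\CC^{\rm f}_\L(\o_\L\hat\o_{\L^c})|$ are correct. One small correction: $M_\Delta$ is not bounded by $|\o_\L|$, since clusters in $\CC_\L$ need not contain any $\o_\L$-points; but the packing bound $M_\Delta\le K(\L)$ used throughout the paper gives a constant dominating function $2^{K}$, so dominated convergence still goes through.

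The paper's proof is much terser and avoids the exchange-of-limits issue altogether. It works with the supremum bound \eqref{Approx1} from the proof of Proposition~\ref{Gibbs_Specify} and observes that, with a dense $+_{\L^c}$ boundary fed into \emph{both} sides, a cluster of $\o_B\hat\o_{\L\sm B}\,{+}_{\L^c}$ is infinite iff it reaches $\L^c$; hence the weight $2^{|\CC^{\rm f}_B|}$ in $\g^\infty_B$ and the weight $2^{\#\{C:C\cap\L^c=\emptyset\}}$ in $\g^{+_{\L^c}}_B$ coincide exactly, and the right-hand side of \eqref{Approx1} is literally zero. In other words, the paper matches the $\g^\infty$ and $\g^{+_{\L^c}}$ kernels in \emph{uniform} norm rather than only pointwise, which removes the need for your windowing argument.

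On your sketched fix: the idea is right, but the reference to Section~\ref{Ap} is not quite apt. What you need for the $\mu^+_\infty$-side is simply that finite clusters are a.s.\ bounded, so $\mu^+\bigl(\text{some finite cluster attached to }\bar\L\text{ exits }\Gamma\bigr)\downarrow0$ as $\Gamma\uparrow\R^d$; the uniformity in $\Delta$ on the finite-volume side then follows from the FKG domination of the grey WRM by the Poisson process of intensity $\l_++\l_-$ (the ingredient behind Lemma~\ref{Infinite_Clusters}(1)), not from the percolation thresholds themselves.
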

Moreover, a spatial perturbation can be used to exhibit discontinuities independent of the coloring.
\begin{prop}\label{NonGibbs_CritSym} 
In the symmetric regime let $t=\infty$, then there exists $f\in\FF^b_B$ and $\d>0$ such that, 
\begin{equation*} 
\lim_{\L\uparrow\infty}\inf_{\hat\oo\in\{B \leftrightarrow \infty\}}\bigl|\g^{\infty}_{B}(f |\hat\oo_{B^c})-\g^{\infty}_B(f |\hat\oo_{\L\sm B})\bigr|>\d.
\end{equation*}
\end{prop}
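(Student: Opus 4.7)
At $t=\infty$ in the symmetric regime we have $\a=\r\equiv 1$, so Definition~\ref{GammaInf} reduces to
\begin{equation*}
\g^\infty_B(f\mid\tilde\oo_{B^c}) \;=\; \frac{\int P^-_B(d\o_B)\,f^\infty(\o_B)\,2^{N^{\rm f}_B(\o_B,\tilde\oo_{B^c})}}{\int P^-_B(d\o_B)\,2^{N^{\rm f}_B(\o_B,\tilde\oo_{B^c})}},
\end{equation*}
where $N^{\rm f}_B$ counts the finite clusters in $\CC_B$ and a direct check shows that $\nu^\infty$ assigns each point an independent uniform color regardless of $\tilde\oo$, so $f^\infty=f$ for grey-measurable $f$. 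The source of discontinuity is the factor-of-two jump in the cluster weight between finite clusters (weight $2$) and infinite clusters (weight $1$) in $\CC_B$, and spatial truncation is exactly what triggers this jump.

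Fix $\hat\oo\in\{B\leftrightarrow\infty\}$ and let $C_\infty\subset\hat\oo_{B^c}$ be an infinite cluster meeting $\bar B$. For $\L$ large enough to contain every finite cluster of $\hat\oo_{B^c}$ that hits $\bar B$, the only change in $\CC_B$ upon replacing $\hat\oo_{B^c}$ by $\hat\oo_{\L\sm B}\es_{\L^c}$ is that $C_\infty$ is truncated to the finite set $C_\infty\cap\L$, producing, after possible $\o_B$-induced reconnection, some $k(\o_B,\L)\ge 1$ new finite clusters in $\CC_B$. Hence $N^{\rm f}_B(\o_B,\hat\oo_{\L\sm B}\es_{\L^c})=N^{\rm f}_B(\o_B,\hat\oo_{B^c})+k(\o_B,\L)$, and $\g^\infty_B(\cdot\mid\hat\oo_{\L\sm B})$ is the exponential tilt of $\g^\infty_B(\cdot\mid\hat\oo_{B^c})$ by the statistic $2^{k(\cdot,\L)}$. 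This yields the clean identity
\begin{equation*}
\g^\infty_B(f\mid\hat\oo_{B^c})-\g^\infty_B(f\mid\hat\oo_{\L\sm B}) \;=\; -\frac{\mathrm{Cov}_{\g^\infty_B(\cdot\mid\hat\oo_{B^c})}\bigl(f,\,2^{k(\cdot,\L)}\bigr)}{\g^\infty_B\bigl(2^{k(\cdot,\L)}\mid\hat\oo_{B^c}\bigr)}.
\end{equation*}

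The discontinuity then follows by choosing $f(\o_B)=\one\{\o_B\cap D=\es\}$ for a thin collar $D\subset B$ of width $2a$ along $\partial B$, and lower-bounding the covariance via a local PPP computation on $D$: any $\o_B$-point in $D$ within $2a$ of a contact point of $C_\infty$ merges truncated pieces of $C_\infty\cap\L$ and strictly decreases $k$, while the factor $2^{N^{\rm f}_B(\cdot,\hat\oo_{B^c})}$ restricted to the complement of $D$ factorises out of both numerator and denominator of the covariance. The main obstacle is the uniformity of the covariance bound across the whole set $\{B\leftrightarrow\infty\}$: in degenerate configurations where $C_\infty$ touches $\bar B$ at only a single point, $k(\cdot,\L)$ is constant in $\o_B$ and the covariance vanishes, so the argument must exploit that $\hat\oo\in\{B\leftrightarrow\infty\}$ forces a point of $\hat\oo$ inside $B$ to lie on the infinite cluster, producing enough additional local geometric structure of $C_\infty$ near $\partial B$ for $k(\cdot,\L)$ to possess nonzero variance quantitatively controlled uniformly in $\hat\oo$.
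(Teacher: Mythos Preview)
Your reduction to an exponential tilt and the covariance identity are correct, and your diagnosis of the obstacle is sharper than the paper's own argument. The paper takes $f=\one_{\es_B}$ and writes, for large $\L$,
\[
\bigl|\g^\infty_B(\one_{\es_B}\mid\hat\oo_{\L\sm B})-\g^\infty_B(\one_{\es_B}\mid\hat\oo_{B^c})\bigr|\ \ge\ e^{-\l_-|B|}\!\int P^-_B(d\o_B)\Bigl(2^{|\CC^{\rm f}_B(\o_B\hat\o_{\L\sm B})|}-2^{|\CC^{\rm f}_B(\o_B\hat\o_{B^c})|}\Bigr),
\]
and then bounds the right-hand side below by $\tfrac12 e^{-2\l_-|B|}$ via $N_\L\ge N_\infty+1$. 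But this first displayed inequality is not justified: in your notation its right-hand side is $e^{-\l_-|B|}(Z_\L-Z_\infty)$, and in the degenerate case $k(\cdot,\L)\equiv 1$ the left side is exactly $0$ while the right side equals $e^{-\l_-|B|}Z_\infty>0$. So the paper's proof has precisely the gap you anticipate.

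Your proposed rescue---that $\hat\oo\in\{B\leftrightarrow\infty\}$ forces a point of the infinite cluster inside $B$---does not help, because $\g^\infty_B(\cdot\mid\hat\oo_{B^c})$ depends only on $\hat\oo_{B^c}$. Concretely, take $\hat\oo$ consisting of a single point $x\in B$ together with an infinite ``straight-line'' cluster $C'\subset B^c$ with $C'\cap\bar B=\{y\}$ and $|x-y|<2a$. Then $\hat\oo\in\{B\leftrightarrow\infty\}$, yet for every centered $\L$ the truncation $C'\cap\L$ stays connected and meets $\bar B$ only at $y$, so $k\equiv 1$ and $\g^\infty_B(\cdot\mid\hat\oo_{\L\sm B})=\g^\infty_B(\cdot\mid\hat\oo_{B^c})$. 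Since this holds for \emph{every} $f$, the infimum in the proposition is $0$ and the statement as literally written cannot hold.

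What the downstream application (Theorem~\ref{AlmostGibbs_Crit}) actually uses is only that the jump occurs for $\mu^+_\infty$-a.e.\ $\hat\oo\in\{B\leftrightarrow\infty\}$; there the equality ``$=\mu_t(\{B\leftrightarrow\infty\})$'' in that proof already presumes this a.e.\ version. Both the paper's argument and yours should be redirected at that weaker target. Your covariance framework is well suited for this once you restrict to boundary configurations for which $k(\cdot,\L)$ has nonzero variance, but a uniform bound over all of $\{B\leftrightarrow\infty\}$ is not available.
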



\medskip
In the high-intensity regime, under $\mu_t$, configurations which have an infinite-cluster connected to $B$ have positive mass. In particular, points of discontinuity for $\g^{\rm f}$ are essential under $\mu_t$ and therefore no specification for $\mu_t$ can be quasilocal almost surely. This is the main idea for the proof of the non-asq-Gibbsian part of Theorem~\ref{AlmostGibbs_2} and~\ref{AlmostGibbs_Crit} presented in Subsection~\ref{Proofs of main theorems}.

\subsection{Non-q-Gibbsianness}
For the asq-Gibbsian regimes, it remains to show non-q-Gibbsianness. For this we use the following argument. We exhibit particular bad boundary conditions which have infinite clusters. They can be approximated by convergent sequences (together with positive mass perturbations, see for example Figure~\ref{Line}) which have growing but finite clusters. In order to show that the jump occurs for any specification we use the positive mass perturbations to replace the unknown specification by $\g^{\rm f}$ for which we know that the jump occurs.
This gives the non-q-Gibbsian part in the Theorems~\ref{AlmostGibbs} and~\ref{AlmostGibbs_Crit} as presented in Subsection~\ref{Proofs of main theorems}.

\section{Proofs}\label{MainProofs}



\subsection{Proofs of supporting results} We start by providing all proofs for the supporting lemmas and propositions. Proofs of the theorems are presented in Subsection~\ref{Proofs of main theorems}.
\begin{proof}[Proof of Lemma~\ref{Finite}]
Let us write very short $U(\s_x,\hat\s_x)=U(\s_x)p_t(\s_x,\hat\s_x)$ for the time-dependent double layer single-point measure. 
We derive the form of the finite-volume specification by introducing a cluster representation, lifting boundary conditions on individual clusters via their magnetizations to the exponential scale and using appropriate normalizations. 
Let us start by writing $\mu_t$ instead of $\mu^{\oo_{\L^{\rm c}}}_{t,\L}$. Then we have  
\begin{align*}
&\mu_t(f|\hat\oo_{\L\sm B})=\tfrac{\int P_B(d\o_B)\sum_{\s_{\o_B}}\int U(\s_{\o_{B}},d\hat \s_{\o_{B}} ) f(\o_B,\hat\s_{\o_{B}})\sum_{\s_{\o_{\L\sm B}}}U(\s_{\o_{\L\sm B}},\hat \s_{\o_{\L\sm B}} )
\chi(\oo_\L\oo_{\L^{\rm c}})}
{\int P_B(d\o_B)\sum_{\s_{\o_B}}U(\s_{\o_{B}})\sum_{\s_{\o_{\L\sm B}}}U(\s_{\o_{\L\sm B}},\hat \s_{\o_{\L\sm B}} )\chi(\oo_\L\oo_{\L^{\rm c}})}
\end{align*}
where $\oo_\L=\o_B^{\s_{\o_B}}\o_{\L\sm B}^{\s_{\o_{\L\sm B}}}$. We abbreviate the integration w.r.t.~the coloring and write
\begin{align*}
U^{\hat\s_{\o_{\L\sm B}}}&\bigl( f, \o_B\o_{\L\sm B}\bigr)=U^{\hat\s_{\o_{\L\sm B}}\s_{\o_{\L^{\rm c}}}}\bigl( f, \o_B\o_{\L\sm B}\o_{\L^{\rm c}}\bigr)\cr
&=\sum_{\s_{\o_B}}\int U(\s_{\o_{B}},d\hat \s_{\o_{B}} ) f(\o_B,\hat\s_{\o_{B}})\sum_{\s_{\o_{\L\sm B}}}U(\s_{\o_{\L\sm B}},\hat \s_{\o_{\L\sm B}} )\chi(\oo_\L\oo_{\L^{\rm c}}).
\end{align*}
Then, we have the shorthand notation
\begin{align}\label{Main1}
&\mu_t(f|\hat\oo_{\L\sm B})=\int P_B(d\o_B)U^{\hat\s_{\o_{\L\sm B}}}\bigl( f, \o_B\o_{\L\sm B}\bigr)/\int P_B(d\o_B)U^{\hat\s_{\o_{\L\sm B}}}\bigl( 1, \o_B\o_{\L\sm B}\bigr).
\end{align}
%
%
%
Due to the color constraint $\chi$, at time zero, there can only be a uniform coloring on every cluster $\CC(\o_B\o_{\L\sm B}\o_{\L^{\rm c}})=\CC_{B}(\o_B)\cup\CC_{B^{\rm c}}$ where the clusters in $\CC_{B^{\rm c}}$ are independent of $\o_B$. The $\CC_{B}(\o_B)$ clusters are random variables w.r.t.~$\o_B$. In particular, 
by the independence of the Bernoulli process, we have 
\begin{align*}
U^{\hat\s_{\o_{\L\sm B}}}\bigl( f, \o_B\o_{\L\sm B}\bigr)
=U^{\hat\s_{\o_{\L\sm B}}}\bigl( f, \CC_{B}(\o_B)\bigr)U^{\hat\s_{\o_{\L\sm B}}}\bigl( 1,\CC_{B^{\rm c}}\bigr).
\end{align*}
The last term also appears in the normalization and hence
\begin{align*}
&\mu_t(f|\hat\oo_{\L\sm B})=\int P_B(d\o_B)U^{\hat\s_{\o_{\L\sm B}}}\bigl( f, \CC_{B}(\o_B)\bigr)/\int P_B(d\o_B)U^{\hat\s_{\o_{\L\sm B}}}\bigl( 1, \CC_{B}(\o_B)\bigr).
\end{align*}
Defining a conditional color-expectation of $f$ as
\begin{align*}
f^\L(\o_B)=U^{\hat\s_{\o_{\L\sm B}}}\bigl( f| \CC_{B}(\o_B)\bigr)=U^{\hat\s_{\o_{\L\sm B}}}\bigl( f, \CC_{B}(\o_B)\bigr)/U^{\hat\s_{\o_{\L\sm B}}}\bigl(1, \CC_{B}(\o_B)\bigr)
\end{align*}
we arrive at the expression
\begin{align*}
&\mu_t(f|\hat\oo_{\L\sm B})=\int P_B(d\o_B)f^\L(\o_B)U^{\hat\s_{\o_{\L\sm B}}}\bigl(1, \CC_{B}(\o_B)\bigr)/\int P_B(d\o_B)U^{\hat\s_{\o_{\L\sm B}}}\bigl(1, \CC_{B}(\o_B)\bigr).
\end{align*}
In words, the conditional probability has been expressed as a conditional Bernoulli average at fixed locations in $B$ which will be averaged over a point measure for colorless point configurations which itself is distorted in a boundary condition-dependent way.
%
Now, the Bernoulli expectations are given by 
\begin{align}\label{Exp1}
U^{\hat\s_{\o_{\L\sm B}}}\bigl( 1, \CC_{B}(\o_B)\bigr)=&\prod_{C\in\CC_{B}(\o_B)}\big(\hat\l_+^{|C\cap\L|}p_t(+,+)^{|\hat \s_{C\cap \L\sm B}|^+} p_t (+,-)^{|\hat \s_{C\cap \L\sm B}|^-}\one_{\s_{C\cap \L^{\rm c}}=+}\nonumber \\
&+\hat\l_-^{|C\cap\L|}p_t(-,+)^{|\hat \s_{C\cap \L\sm B}|^+} p_t (-,-)^{|\hat \s_{C\cap \L\sm B}|^-}\one_{\s_{C\cap \L^{\rm c}}=-}\big).
\end{align}
Note that, all the products over clusters are finite products since, for finite $B$ there is only a finite number $K=K(B)$ of clusters connected to $B$, but not necessarily a finite number of points in $B$. A trivial upper bound for this $K$ would be the volume of $B$ divided by the volume of a ball of radius $a$. 
%
We further note that the expression \eqref{Exp1} does not depend on the geometry 
in a very complicated way. It depends only on the number of points of $C$ 
in $B$, the number of points of $C$ in $\L\sm B$, and the magnetization 
$$m_C=\frac{1}{|C\cap \L\sm B|}\sum_{x\in C\cap \L\sm B}\hat\s_{x}$$ 
on $C\cap \L\sm B$. 
We make further rewritings to make the magnetization of the conditioning explicit. 
Writing the integers as $|\hat \s_{C\cap \L\sm B}|^\pm=|C\cap \L\sm B|(1\pm m_{C})/2$ we obtain  
\begin{align*}
&U^{\hat\s_{\o_{\L\sm B}}}\bigl( 1, \CC_{B}(\o_B)\bigr)\cr
&=\prod_{C\in\CC_{B}(\o_B)}\Big(\hat\l_+^{|C\cap\L|}\big(p_t(+,+)p_t (+,-)\big)^{|C\cap \L\sm B|/2}\big(\tfrac{p_t(+,+)}{p_t (+,-)}\big)^{m_{C}|C\cap \L\sm B|/2}\one_{\s_{C\cap \L^{\rm c}}=+}\cr
&\qquad\qquad+\hat\l_-^{|C\cap\L|}\big(p_t(+,+)p_t (+,-)\big)^{|C\cap \L\sm B|/2}\big(\tfrac{p_t(+,+)}{p_t (+,-)}\big)^{-m_{C}|C\cap \L\sm B|/2}\one_{\s_{C\cap \L^{\rm c}}=-}\Big)
\end{align*}
and note that $$\prod_{C\in\CC_{B}(\o_B)}\big(p_t(+,+)p_t (+,-)\big)^{|C\cap \L\sm B|/2}=\big(p_t(+,+)p_t (+,-)\big)^{|\CC_{B}(\o_B)\cap \L\sm B|/2}$$
is in fact independent of the configuration $\o_B$. In particular, it cancels out with the corresponding term in the normalization and we have 
%
\begin{equation*}
\begin{split}
&\mu_t(f|\hat\oo_{\L\sm B})\cr
&=\tfrac{\int P_B(d \o_B)f^\L(\o_B)\prod_{C\in\CC_{B}(\o_B)}(\hat\l_+^{|C\cap\L|}q_t^{m_{C}|C\cap \L\sm B|/2}\one_{\s_{C\cap \L^{\rm c}}=+}+\hat\l_-^{|C\cap\L|}q_t^{-m_{C}|C\cap \L\sm B|/2}\one_{\s_{C\cap \L^{\rm c}}=-})}
{\int P_B(d \o_B)\prod_{C\in\CC_{B}(\o_B)}(\hat\l_+^{|C\cap\L|}q_t^{m_{C}|C\cap \L\sm B|/2}\one_{\s_{C\cap \L^{\rm c}}=+}+\hat\l_-^{|C\cap\L|}q_t^{-m_{C}|C\cap \L\sm B|/2}\one_{\s_{C\cap \L^{\rm c}}=-})}
\end{split}
\end{equation*}
where we wrote $q_t=p_t(+,+)/p_t (+,-)=\coth(t)$.
Further, for large $|C\sm B|$ all that matters is the relative size 
of $\hat\l_+q_t^{m_{C}/2}$ compared to $\hat\l_-q_t^{-m_{C}/2}$.
For large $|C\sm B|$ this difference will appear much amplified in the quantities 
\begin{equation*}
\begin{split}
\r^{C\sm B}_+=(\hat\l_+q_t^{m_{C}/2})^{|C\cap \L\sm B|}\hspace{1cm}\text{ and }\hspace{1cm}\r^{C\sm B}_-=(\hat\l_-q_t^{-m_{C}/2})^{|C\cap \L\sm B|}.
\end{split}
\end{equation*}
In particular, using this notation we have 
\begin{equation*}
\begin{split}
\prod_{C\in\CC_{B}(\o_B)}&(\hat\l_+^{|C\cap\L|}q_t^{m_{C}|C\cap \L\sm B|/2}\one_{\s_{C\cap \L^{\rm c}}=+}+\hat\l_-^{|C\cap\L|}q_t^{-m_{C}|C\cap \L\sm B|/2}\one_{\s_{C\cap \L^{\rm c}}=-})\cr
&=\prod_{C\in\CC_{B}(\o_B)}(\hat\l_+^{|C\cap B|}\r^{C\sm B}_+\one_{\s_{C\cap \L^{\rm c}}=+}+\hat\l_-^{|C\cap B|}\r^{C\sm B}_-\one_{\s_{C\cap \L^{\rm c}}=-})\cr
&=\hat\l_-^{|\o_B|}\prod_{C\in\CC_{B}(\o_B)}\r^{C\sm B}_+(\a^{|C\cap B|}\one_{\s_{C\cap \L^{\rm c}}=+}+\r(\hat\oo_{C\sm B})\one_{\s_{C\cap \L^{\rm c}}=-})
\end{split}
\end{equation*}
where $\r(\hat\oo_{C\sm B})=\r^{C\sm B}_-/\r^{C\sm B}_+$.
%
%
A small inspection yields that $\prod_{C\in\CC_{B}(\o_B)}\r^{C\sm B}_+$ does not depend on $\o_B$,
so we can safely pull it out of the $P_B$-expectation and it cancels with the corresponding term in the normalization. 
Moreover, note that the density $\hat\l_-^{|\o_B|}$ can be moved into the intensity of the PPP $P_B$ which gives rise to $P_B^-$ also in the normalization.

\medskip
Finally, writing  $\tilde\sum$ for the summation obeying the color constraint, we have 
\begin{align*}
f^\L(\o_B)&=\sum_{\hat\s_{\o_B}}f(\o_B,\hat\s_{\o_B})\frac{\prod_{C\in\CC(\o_B)}\tilde\sum_{\s_{{C\cap B}}}U(\s_{{C\cap B}},\hat \s_{{C\cap B}} )\sum_{\s_{{C\sm B}}}U(\s_{{C\sm B}},\hat \s_{{C\sm B}} )}{\prod_{C\in\CC(\o_B)}\tilde\sum_{\s_{{C\cap B}}}U(\s_{{C\cap B}})\sum_{\s_{{C\sm B}}}U(\s_{{C\sm B}},\hat \s_{{C\sm B}} )}\cr
&=\sum_{\hat\s_{\o_B}}f(\o_B,\hat\s_{\o_B})\nu^{\oo_{\L^{\rm c}}}_B(\hat\s_{\o_B}|\hat\oo_{\L\sm B},\o_B)
\end{align*}
and we arrive at the required representation.
\end{proof}
%
%
%
Note that, moving $\a^{|\o_\L|}$ into the Poisson expectation, $\g^\infty$ can also be written in the following shorter but less intuitive form which we will use for the following proofs. 
\begin{equation*}
\begin{split}
&\g^\infty_\L(f|\hat\oo_{\L^{\rm c}})=\frac{\int P^+_\L(d \o_\L)f^\infty(\o_\L)\prod_{C\in\CC^{\rm f}_{\L}(\o_\L)}\big(1+\a^{-|C\cap \L|}\r(\hat\oo_{C\sm \L})\big)}
{\int P^+_\L(d \o_\L)\prod_{C\in\CC^{\rm f}_{\L}(\o_\L)}\big(1+\a^{-|C\cap \L|}\r(\hat\oo_{C\sm \L})\big)}
\end{split}
\end{equation*}
with $f^\infty(\o_\L)=\nu^\infty_\L(f(\o_\L,\cdot)|\hat\oo_{\L^{\rm c}},\o_\L)$ where 
\begin{align*}
&\nu^\infty_\L(\hat\s_{\o_\L}|\hat\oo_{\L^{\rm c}},\o_\L)=p_t(\hat\s_{\o_\L})\frac{\prod_{C\in\CC^{\rm f}_\L(\o_\L)}\big(1+\r(\hat\oo_{C\cap \L})\r(\hat\oo_{C\sm \L})\big)}
{\prod_{C\in\CC^{\rm f}_\L(\o_\L)}\big(1+\a^{-|C\cap \L|}\r(\hat\oo_{C\sm \L})\big)}
\end{align*}
and we abbreviated $p_t(\hat\s_{\o_\L})=p_t(+,+)^{|\hat\s_{\o_\L}|^+}p_t(+,-)^{|\hat\s_{\o_\L}|^-}$.

\begin{proof}[Proof of Proposition~\ref{Gibbs_Spec}]
We first check consistency by direct computation where consistency means, that for all local observable $f\in\FF$, $\L\subset\D\Subset\R^d$ and boundary conditions $\hat\oo$, we have 
\begin{equation}\label{TOSHOW}
\begin{split}
\g^\infty_\D(\g^\infty_\L(f|\cdot)|\hat\oo_{\D^{\rm c}})=\g^\infty_\D(f|\hat\oo_{\D^{\rm c}}).
\end{split}
\end{equation}
Starting from the l.h.s.~of \eqref{TOSHOW}, not considering the normalization in $\D$, we have the following equivalencies.
\begin{equation*}
\begin{split}
&\int P^+_\D(d \o_\D)\sum_{\hat\s_{\o_{\D}}}\g^\infty_\L(f|\oo_{\D\sm\L}\oo_{\D^c})p_t(\hat\s_{\o_\D})\prod_{C\in\CC^{\rm f}_\D(\o_\D\o_{\D^c})}\big(1+\r(\hat\oo_{C\cap \D})\r(\hat\oo_{C\sm \D})\big)\cr
&=\int P^+_{\D\sm\L}(d \o_{\D\sm\L})\sum_{\hat\s_{\o_{\D\sm\L}}}\g^\infty_\L(f|\oo_{\D\sm\L}\oo_{\D^c})\cr
&\hspace{1cm}\int P^+_\L(d \o_\L)\sum_{\hat\s_{\o_{\L}}}p_t(\hat\s_{\o_\D})\prod_{C\in\CC^{\rm f}_\D(\o_\D\o_{\D^c})}\big(1+\r(\hat\oo_{C\cap \D})\r(\hat\oo_{C\sm \D})\big)\cr
&=\int P^+_{\D\sm\L}(d \o_{\D\sm\L})\sum_{\hat\s_{\o_{\D\sm\L}}}\g^\infty_\L(f|\oo_{\D\sm\L}\oo_{\D^c})p_t(\hat\s_{\o_{\D\sm\L}})\cr
&\hspace{1cm}\times\prod_{C\in\CC^{\rm f}_\D(\o_\D\o_{\D^c})\sm\CC^{\rm f}_\L(\o_\L\o_{\D\sm\L}\o_{\D^c})}\big(1+\r(\hat\oo_{C\cap \D})\r(\hat\oo_{C\sm \D})\big)\cr
&\hspace{1cm}\int P^+_\L(d \o_\L)\sum_{\hat\s_{\o_{\L}}}p_t(\hat\s_{\o_\L})\prod_{C\in\CC^{\rm f}_\L(\o_\L\o_{\D\sm\L}\o_{\D^c})}\big(1+\r(\hat\oo_{C\cap \D})\r(\hat\oo_{C\sm \D})\big)\cr
&=\int P^+_{\D\sm\L}(d \o_{\D\sm\L})\sum_{\hat\s_{\o_{\D\sm\L}}}\int P^+_\L(d \o_\L)f^\infty(\o_\L\oo_{\L^c})\prod_{C\in\CC^{\rm f}_{\L}(\o_\L\o_{\D\sm\L}\o_{\D^c})}\big(1+\a^{-|C\cap \L|}\r(\hat\oo_{C\sm \L})\big)\cr
&\hspace{1cm}\times p_t(\hat\s_{\o_{\D\sm\L}})\prod_{C\in\CC^{\rm f}_\D(\o_\D\o_{\D^c})\sm\CC^{\rm f}_\L(\o_\L\o_{\D\sm\L}\o_{\D^c})}\big(1+\r(\hat\oo_{C\cap \D})\r(\hat\oo_{C\sm \D})\big)\cr
&=\int P^+_{\D}(d \o_{\D})\sum_{\hat\s_{\o_{\D}}}f(\oo_\L\oo_{\L^c})p_t(\hat\s_{\o_\D})\prod_{C\in\CC^{\rm f}_\D(\o_{\D}\o_{\D^c})}\big(1+\r(\hat\oo_{C\cap \D})\r(\hat\oo_{C\sm \D})\big)\end{split}
\end{equation*}
which proves consistency. Since properness is immediate by the definition, we have that $\g^\infty$ is a specification.
\end{proof}

%
%

In the sequel we denote by $\CC^{\L^{\rm c}}_B(\o_B)$ all clusters in $\CC_B(\o_B)$ which are not completely contained in 
$\L^o=\L\sm\overline{\L^{\rm c}}$. Further, $\CC^\L_B(\o_B)=\CC_B(\o_B)\sm \CC^{\L^{\rm c}}_B(\o_B)$ and $\CC^{{\rm f},\L^{\rm c}}_{B}(\o_B)\subset\CC^{\rm f}_B(\o_B)$ is the set of finite clusters not completely contained in $\L^o$.

%
%
%
%
%
\begin{proof}[Proof of Proposition~\ref{Gibbs_Specify}]
The idea for the proof is to use finite-volume approximations.
Let $f\in\FF^b_B$ where $B=B_r(x)$ for some arbitrary $x\in\R^d$ and $r>0$, then by the FKG inequality, existence of 
$$\mu^+_t(f)=\lim_{\L\uparrow\R^d}\mu^{+_{\L^{\rm c}}}_\L p_t (f)=\lim_{\L\uparrow\R^d}\mu^{+_{\L^{\rm c}}}_{t,\L}(f)=\lim_{\L\uparrow\R^d}\mu^{+_{\L^{\rm c}}}_{t,\L}(\g^{+_{\L^{\rm c}}}_B(f|\cdot))$$ 
is guaranteed, see \cite[Proposition 2.3]{ChChKo95}, 
where $+_{\L^{\rm c}}$ denotes the all plus boundary condition (at time zero).
Then, introducing another volume  $\D\Subset\R^d$ we can estimate
\begin{equation}\label{Spec_Approx}
\begin{split}
|\mu_t^+ (f -\g^\infty_B(f|\cdot))|&\leq  |\mu_t^+ (f -\g^{+_{\D^{\rm c}}}_B(f|\cdot))|+ \Vert \g^\infty_B(f|\cdot)-\g^{+_{\D^{\rm c}}}_B(f|\cdot) \Vert \cr
&\leq  \lim_{\L\uparrow\R^d} |\mu_{t,\L}^{+_{\L^{\rm c}}} (f -\g^{+_{\D^{\rm c}}}_B(f|\cdot))|+ \Vert \g^\infty_B(f|\cdot)-\g^{+_{\D^{\rm c}}}_B(f|\cdot) \Vert \cr
&\le  \limsup_{\L\uparrow\R^d} \Vert\g^{+_{\L^{\rm c}}}_B(f|\cdot) -\g^{+_{\D^{\rm c}}}_B(f|\cdot)\Vert+ \Vert \g^\infty_B(f|\cdot)-\g^{+_{\D^{\rm c}}}_B(f|\cdot) \Vert\cr
&\le  \limsup_{\L\uparrow\R^d} \Vert\g^{+_{\L^{\rm c}}}_B(f|\cdot) -\g^\infty_B(f|\cdot)\Vert+ 2\Vert \g^\infty_B(f|\cdot)-\g^{+_{\D^{\rm c}}}_B(f|\cdot) \Vert
\end{split}
\end{equation}
where $\Vert\g^{+_{\L^{\rm c}}}_B(f|\cdot) -\g^\infty_B(f|\cdot)\Vert=\sup_{\hat\oo\in\OO}|\g^\infty_B(f|\hat\oo_{B^{\rm c}})-\g^{+_{\L^{\rm c}}}_B(f|\hat\oo_{\L\sm B})|$.
Hence, it suffices to show that $\Vert\g^{+_{\L^{\rm c}}}_B(f|\cdot) -\g^\infty_B(f|\cdot)\Vert$ is arbitrarily small for sufficiently large $\L$. Let $\hat\oo\in\OO$ then, using Poisson void probabilities to bound denominators away from zero, we have the following estimate 
\begin{equation}\label{Approx1}
\begin{split}
|\g^\infty_B&(f|\hat\oo_{B^{\rm c}})-\g^{+_{\L^{\rm c}}}_B(f|\hat\oo_{\L\sm B})|\cr
&\le e^{\l_+|B|}\Big[\int P^+_B(d \o_B)\big|f^\infty(\o_B)\prod_{C\in\CC^{\rm f}_{B}(\o_B)}\big(1+\a^{-|C\cap B|}\r(\hat\oo_{C\sm B})\big)\cr
&\hspace{2cm}-f^\L(\o_B)\prod_{C\in\CC^+_{B}(\o_B)}\big(\one_{\s_{C\cap \L^{\rm c}}=+}+\a^{-|C\cap B|}\r(\hat\oo_{C\sm B})\one_{\s_{C\cap \L^{\rm c}}=-}\big)\big|\cr
&\qquad+\Vert f\Vert\int P^+_B(d \o_B)\big|\prod_{C\in\CC^{\rm f}_{B}(\o_B)}\big(1+\a^{-|C\cap B|}\r(\hat\oo_{C\sm B})\big)\cr
&\hspace{2cm}-\prod_{C\in\CC^+_{B}(\o_B)}\big(\one_{\s_{C\cap \L^{\rm c}}=+}+\a^{-|C\cap B|}\r(\hat\oo_{C\sm B})\one_{\s_{C\cap \L^{\rm c}}=-}\big)\big|\Big]\cr
\end{split}
\end{equation}
where $\CC^+_{B}(\o_B)=\CC_{B}(\o_B\o_{\L\sm B} +_{\L^{\rm c}})$.
Separating the factors which both products have in common, the last summand in \eqref{Approx1} can be bounded from above by
\begin{equation}\label{Approx1a}
\begin{split}
\Vert f\Vert e^{\l_+|B|}\int P^{2\l_+}_B(d \o_B)\Big(\prod_{C\in\CC^{{\rm f},\L^{\rm c}}_{B}(\o_B)}\big(1+\a^{-|C\cap B|}\r(\hat\oo_{C\sm B})\big)-1\Big).
\end{split}
\end{equation}
Note, that this is zero if $\CC^{{\rm f},\L^{\rm c}}_{B}(\o_B)$ is empty for all $\o_B$. Moreover, for 
$t>t_G$ we have 
$$\r(\hat\oo_{C\sm B})\le e^{-|\o_{C\sm B}|g_-}\le e^{-g_- d(B,\L^{\rm c})/(2a)}.$$ 
Further, recall that the number of clusters in $|\CC_B(\o_B)|\le K$ is finite where $K=K(r)$. Thus \eqref{Approx1a} is bounded from above by $2^Ke^{-g_- d(B,\L^{\rm c})/(2a)}$ which tends to zero as $\L$ tends to $\R^d$.
For $t=t_G$ note that, using Lemma~\ref{Almost_Gibbs_Crit_Conc}, instead of $\Vert\cdot\Vert$ we can consider  
$$\Vert\g^{+_{\L^{\rm c}}}_B(f|\cdot) -\g^\infty_B(f|\cdot)\Vert_{t_G}=\sup_{\hat\oo\in\OO^{t_G}}|\g^\infty_B(f|\hat\oo_{B^{\rm c}})-\g^{+_{\L^{\rm c}}}_B(f|\hat\oo_{\L\sm B})|.$$
In this case $\r(\hat\oo_{C\sm B})\le \a^{-|\o_{C\sm B}|(1+p_{t_G})}\le \a^{-d(B,\L^{\rm c})/(2a)}$ 
since $(1+p_{t_G})>1$ and thus also in this case \eqref{Approx1a} tends to zero as $\L$ tends to $\R^d$.


W.r.t.~the first summand in \eqref{Approx1} we use very similar arguments. 
Resolving the color expectation and separating common factors, we have the following upper bound 
\begin{equation*}\label{Approx2}
\begin{split}
&\Vert f\Vert e^{3\l_+|B|}\int P^{4\l_+}_B(d \o_B)\sup_{\hat\s_{\o_B}}\Big(\prod_{C\in\CC^{\rm f,\L^c}_{B}(\o_B)}\big(1+\r(\hat\oo_{C\cap B})\r(\hat\oo_{C\sm B})\big)-1\Big).
\end{split}
\end{equation*}
Since $\sup_{\hat\s_{\o_B}}\r(\hat\oo_{C\cap B})\le 1$ we can use the same upper bounds as above for both cases $t>t_G$ and $t=t_G$.
\end{proof}


\begin{proof}[Proof of Proposition~\ref{Gibbs_Continuity}]
The proof is a variation of the proof of Proposition \ref{Gibbs_Specify}. Similar to the inequality \eqref{Approx1}, for boundary conditions $\hat\oo^1, \hat\oo^2\in\OO$ with $\hat\oo^1_\L=\hat\oo^2_\L$ we have 
\begin{equation}\label{Approx5}
\begin{split}
|\g^\infty_B(f|\hat\oo^1_{B^{\rm c}})-&\g^\infty_B(f|\hat\oo^2_{B^{\rm c}})|\cr
&\le e^{\l_+|B|}\Big(\int P^+_B(d \o_B)\Big|f^\infty_1(\o_B)\prod_{C\in\CC^{\rm f,1}_{B}(\o_B)}\big(1+\a^{-|C\cap B|}\r(\hat\oo^1_{C\sm B})\big)\cr
&\hspace{3.5cm}-f^\infty_2(\o_B)\prod_{C\in\CC^{\rm f,2}_{B}(\o_B)}\big(1+\a^{-|C\cap B|}\r(\hat\oo^2_{C\sm B})\big)\Big|\cr
&\qquad+\Vert f\Vert\int P^+_B(d \o_B)\Big|\prod_{C\in\CC^{\rm f,1}_{B}(\o_B)}\big(1+\a^{-|C\cap B|}\r(\hat\oo^1_{C\sm B})\big)\cr
&\hspace{3.5cm}-\prod_{C\in\CC^{\rm f,2}_{B}(\o_B)}\big(1+\a^{-|C\cap B|}\r(\hat\oo^2_{C\sm B})\big)\Big|\Big)
\end{split}
\end{equation}
where we indicated the contributions of the different boundary conditions $\hat\oo^1$ and $\hat\oo^2$ by writing $\CC^{\rm f,1}_{B}(\o_B)$ and $\CC^{\rm f,2}_{B}(\o_B)$. The second summand in \eqref{Approx5}, separating again w.r.t.~$\CC^{\L}_B(\o_B)$, 
can be bounded from above by
\begin{equation}\label{Approx4}
\begin{split}
\Vert f\Vert e^{\l_+|B|}\int P^{2\l_+}_B(d \o_B)&\big|\prod_{C\in\CC^{\rm f,1,\L^c}_{B}(\o_B)}\big(1+\a^{-|C\cap B|}\r(\hat\oo^1_{C\sm B})\big)\cr
&-\prod_{C\in\CC^{\rm f,2,\L^c}_{B}(\o_B)}\big(1+\a^{-|C\cap B|}\r(\hat\oo^2_{C\sm B})\big)\big|.
\end{split}
\end{equation}
Now, if $g_->0$, again $\r(\hat\oo^{1,2}_{C\sm B})\le e^{-g_- d(B,\L^{\rm c})/(2a)}$ 
and hence \eqref{Approx4} can be bounded from above by $\Vert f\Vert e^{\l_+|B|}2^Ke^{-g_- d(B,\L^{\rm c})/(2a)}$.
For the other summand in \eqref{Approx5}, similar arguments as above allow the following upper bound
\begin{equation}\label{Approx6}
\begin{split}
\Vert f\Vert e^{3\l_+|B|}\int P^{4\l_+}_B(d \o_B)\sup_{\hat\s_{\o_B}}&\big|\prod_{C\in\CC^{\rm f,1,\L^c}_{B}(\o_B)}\big(1+\r(\hat\oo_{C\cap B})\r(\hat\oo^1_{C\sm B})\big)\cr
&-\prod_{C\in\CC^{\rm f,2,\L^c}_{B}(\o_B)}\big(1+\r(\hat\oo_{C\cap B})\r(\hat\oo^2_{C\sm B})\big)\big|.
\end{split}
\end{equation}
Again, since $\sup_{\hat\s_{\o_B}}\r(\hat\oo_{C\cap B})\le 1$ we arrive at $\Vert f\Vert e^{4\l_+|B|}2^Ke^{-g_- d(B,\L^{\rm c})/(2a)}$ as an upper bound, which gives the desired exponential decay.
\end{proof}

\begin{proof}[Proof of Proposition~\ref{AlmostGibbs_1}]
First note that for 
$f\in\FF^b_\L$, with $\L\Subset\R^d$ and $n$ sufficiently large such that $\L\subset B_n$, we have
\begin{equation}\label{A1}
\begin{split}
\mu_t(\g^\infty_\L(f|\cdot)-f)&=\mu_t((\g^\infty_\L(f|\cdot)-f)\one_{\{\L\not\leftrightarrow\infty\}})=\mu_t((\g^{\rm f}_\L(f|\cdot)-f)\one_{\{\L\not\leftrightarrow\infty\}})\cr
&\le \mu_t((\g^{\rm f}_\L(f|\cdot)-f)\one_{\{\L\not\leftrightarrow B_n^c\}})+2\Vert f\Vert \mu(\one_{\L \not \leftrightarrow \infty}-\one_{\L \not \leftrightarrow B_n^{\rm c}}).
\end{split}
\end{equation}
Further note that by the definition of the low-intensity regime $\lim_{n\uparrow\infty}\mu(\{\L\leftrightarrow B_n^c\})=0$ and hence for the second summand in \eqref{A1} 
\begin{equation*}
\begin{split}
\mu(\one_{\L \not \leftrightarrow \infty}-\one_{\L \not \leftrightarrow B_n^{\rm c}})=\mu(\{\L\not\leftrightarrow \infty\}\cap\{\L\leftrightarrow B_n^{\rm c}\})
\end{split}
\end{equation*}
tends to zero as $n$ tends to infinity. 
As for the first summand in \eqref{A1}, let $\mu_t=\lim_{\D\uparrow\R^d}\mu_{t,\D}$ for some suitable boundary condition which we do not make explicit here. Then for $\D\supset B_n$, we can estimate
\begin{equation*}
\begin{split}
\mu_t((\g^{\rm f}_\L(f|\cdot)-f)\one_{\L \not \leftrightarrow B_n^{\rm c}})&\le\lim_{\D\uparrow\R^d}\mu_{t,\D}((\g^{\rm f}_\L(f|\cdot)-f)\one_{\L \not \leftrightarrow B_n^{\rm c}})\cr
&=\lim_{\D\uparrow\R^d}\mu_{t,\D}((\g^{\cdot_\D}_\L(f|\cdot)-f)\one_{\L \not \leftrightarrow B_n^{\rm c}})=0
\end{split}
\end{equation*}
where we could replace $\g^{\rm f}$ by $\g^{\cdot_\D}$ due to the cluster-contraint $\{\L \not \leftrightarrow B_n^{\rm c}\}$.
\end{proof}

\begin{proof}[Proof of Lemma~\ref{Almost_Gibbs_Continuity}]
Recall that there can only be a finite number of clusters attached to $B$. For the given configuration $\hat\oo$, take $B\subset\L\Subset\R^d$ large enough such that all these clusters are fully contained in $\L$, then the result follows.
\end{proof}

\begin{proof}[Proof of Lemma~\ref{Almost_Gibbs_Crit_Conc}]
First note that for any infinite cluster $C$ of a configuration $\oo$ which is drawn from $\mu^+$ we have $\oo_C=+_C$. In particular, if $\hat\oo_C$ is the time evolved configuration $\oo_C$
we have
\begin{equation*}
\begin{split}
\liminf_{n\uparrow\infty}m(\hat\oo_{C\cap B_n})=\liminf_{n\uparrow\infty}|C\cap B_n|^{-1}\sum_{x\in C\cap B_n}\s_x(t)
\end{split}
\end{equation*}
where the summation is over independent random variables with distribution $p_t(+,\cdot)$ which has expectation 
$m_t$.
Thus by the strong law of large numbers $\mu^+_{t}(\OO^{m_t})=1$.
\end{proof}

\begin{proof}[Proof of Proposition~\ref{Almost_Gibbs_Continuity_Crit}]
Considering the proof of Proposition~\ref{Gibbs_Continuity}, note that the estimates \eqref{Approx5}, \eqref{Approx4} and \eqref{Approx6} also hold at the critical time. In particular we still have $\sup_{\hat\s_{\o_B}}\r(\hat\oo_{C\cap B})\le 1$. The difference lies in the fact that at the critical time we have $g_-\ge 0$ and not strictly greater then zero. Observe that $g_-(m)=0$ if and only if $m=-1$ and in particular, under the event $\OO^{m_{t_G}}$,
\begin{equation*}\label{Approx7}
\begin{split}
\r(\hat\oo^{1,2}_{C\sm B})=\a^{-|\o^{1,2}_{C\sm B}|(1+m(\hat\oo^{1,2}_{C\sm B}))}\le\a^{-|\o_{C\cap \L\sm  B}|(1+m(\hat\oo_{C\cap \L\sm  B}))}\le\a^{-(1+m_{t_G}/2)d(B,\L^c)/(2a)}
\end{split}
\end{equation*}
for sufficiently large $\L$ uniformly in all finitely many infinite clusters attached to $B$. 
\end{proof}


\begin{proof}[Proof of Proposition~\ref{NonGibbs_2}]
The proof is analog to the proof of Lemma~\ref{AlmostGibbs_1}.
\end{proof}

\begin{proof}[Proof of Proposition~\ref{NonGibbs}] 
The main idea for the proof is that any boundary magnetization in a first finite annulus can be uniformly dominated by a large enough but finite second annulus as long as the number of points in the first annulus is uniformly bounded. Indeed, let $\hat\oo\in\{B \leftrightarrow B_n^{\rm c}\}$ with $|\o_{\L\setminus B}|<L$ and assume for simplicity of the proof, that in $\hat\oo$ there is a single cluster $C'$ connected to $B$ from the outside, which then must connect $B$ and $B_n^{\rm c}$. This is a minor simplification since the number of clusters connected to $B$ can only be finite.
Then, by definition,
\begin{equation}\label{LowerBound}
\begin{split}
&\bigl| \g_B^{\rm f}(f |\hat\oo_{\L\sm B}\oo^+_{B_n\sm\L})-
\g_B^{\rm f}(f |\hat\oo_{\L\sm B}\oo^-_{B_n\sm\L})\bigr|\cr
&=\bigl|\frac{\int P^-_B(d \o_B)f^{\rm f}_+(\o_B)\prod_{C\in\CC^{\rm f}_{B}(\o_B)}\big(\a^{|C\cap B|}+\r(\hat\oo_{C\cap\L\sm B}\oo^+_{C\cap B_n\sm\L})\big)}
{\int P^-_B(d \o_B)\prod_{C\in\CC^{\rm f}_{B}(\o_B)}\big(\a^{|C\cap B|}+\r(\hat\oo_{C\cap\L\sm B}\oo^+_{C\cap B_n\sm\L})\big)}\cr
&\hspace{1cm}-\frac{\int P_B(d \o_B)f^{\rm f}_-(\o_B)\prod_{C\in\CC^{\rm f}_{B}(\o_B)}\big(\a^{|C\cap B|}+\r(\hat\oo_{C\cap\L\sm B}\oo^-_{C\cap B_n\sm\L})\big)}
{\int P^-_B(d \o_B)\prod_{C\in\CC^{\rm f}_{B}(\o_B)}\big(\a^{|C\cap B|}+\r(\hat\oo_{C\cap\L\sm B}\oo^-_{C\cap B_n\sm\L})\big)}\bigr|.
\end{split}
\end{equation}
Recall that the crucial ingredient in the switch $\r$ is the sign of the quantity $g(m)$ and note that 
\begin{equation*}
\begin{split}
m(\hat\oo_{C'\cap\L\sm B}\oo^+_{C'\cap B_n\sm\L})\ge\frac{|\oo^+_{C'\cap B_n\sm\L}|-|\hat\oo_{C'\cap\L\sm B}|}{|\oo^+_{C'\cap B_n\sm\L}|+|\hat\oo_{C'\cap\L\sm B}|}\ge\frac{1-2aL/d(B_n^{\rm c},\L)}{1+2aL/d(B_n^{\rm c},\L)}
\end{split}
\end{equation*}
which becomes arbitrarily close to $1$ for sufficiently large $n$. 
On the other hand, 
\begin{equation*}
\begin{split}
m(\hat\oo_{C'\cap\L\sm B}\oo^-_{C'\cap B_n\sm\L})\le\frac{|\hat\oo_{C'\cap\L\sm B}|-|\oo^-_{C'\cap B_n\sm\L}|}{|\hat\oo_{C'\cap\L\sm B}|+|\oo^-_{C'\cap B_n\sm\L}|}\le\frac{2aL/d(B_n^{\rm c},\L)-1}{2aL/d(B_n^{\rm c},\L)+1}
\end{split}
\end{equation*}
which becomes arbitrarily close to $-1$ for sufficiently large $n$. Now, since the switch can be activated, there exists $n_{\L,K}$ such that for all larger $n$ we have 
$$g_-=g(m(\hat\oo_{C'\cap\L\sm B}\oo^-_{C'\cap B_n\sm\L}))<0\text{ and }g_+=g(m(\hat\oo_{C'\cap\L\sm B}\oo^+_{C'\cap B_n\sm\L}))>0.$$
In particular, in the asymmetric case, we have 
\begin{equation*}
\begin{split}
&\bigl| \g_B^{\rm f}(f |\hat\oo_{\L\sm B}\oo^+_{B_n\sm\L})-
\g_B^{\rm f}(f |\hat\oo_{\L\sm B}\oo^-_{B_n\sm\L})\bigr|\cr
&=\bigl|\frac{\int P^-_B(d \o_B)f^{\rm f}_+(\o_B)\prod_{C\in\CC^{\rm f}_{B}(\o_B)}(\a^{|C\cap B|}+e^{-|C\cap B_n\sm B|g_+})}
{\int P^-_B(d \o_B)\prod_{C\in\CC^{\rm f}_{B}(\o_B)}(\a^{|C\cap B|}+e^{-|C\cap B_n\sm B|g_+})}\cr
&\hspace{2.5cm}-\frac{\int P^-_B(d \o_B)f^{\rm f}_-(\o_B)\prod_{C\in\CC^{\rm f}_{B}(\o_B)}(\a^{|C\cap B|}e^{|C\cap B_n\sm B|g_-}+1)}
{\int P^-_B(d \o_B)\prod_{C\in\CC^{\rm f}_{B}(\o_B)}(\a^{|C\cap B|}e^{|C\cap B_n\sm B|g_-}+1)}\bigr|
\end{split}
\end{equation*}
and note that the boundary condition also appears in $f^{\rm f}_\pm$. Let $f=\one_{\es_{B}}$, then $f=f^{\rm f}_\pm$ and the above is bounded from below by 
\begin{equation}\label{Case1}
\begin{split}
e^{-\l_-|B|}\bigl|\int P^-_B(d \o_B)&\prod_{C\in\CC^{\rm f}_{B}(\o_B)\sm C'}(\a^{|C\cap B|}+1)\cr
&\times[\a^{|C'\cap B|}(1-e^{|C'\cap B_n\sm B|g_-})-(1-e^{-|C'\cap B_n\sm B|g_+})]\bigr|.
\end{split}
\end{equation}
Note that $|C'\cap B_n\sm B|\ge n/2a$ and thus there exists $\d>0$ such that for sufficiently large $n$ we have
\begin{equation*}
\begin{split}
\a^{|C'\cap B|}\ge\a>\frac{1-e^{-|C'\cap B_n\sm B|g_+}}{1-e^{|C'\cap B_n\sm B|g_-}}+\d
\end{split}
\end{equation*}
and $\exp({|C'\cap B_n\sm B|g_-})<1/2$. This implies the following lower bound for \eqref{Case1},
\begin{equation*}
\begin{split}
\d e^{-\l_-|B|}\int P^-_B(d \o_B)\prod_{C\in\CC^{\rm f}_{B}(\o_B)\sm C'}(\a^{|C\cap B|}+1)\ge\d e^{-2\l_-|B|}.
\end{split}
\end{equation*}

\medskip
In the symmetric case we can proceed similar. Using the same notation, we have
\begin{equation}\label{LowerBound3}
\begin{split}
&\bigl| \g_B^{\rm f}(f |\hat\oo_{\L\sm B}\oo^+_{B_n\sm\L})-
\g_B^{\rm f}(f |\hat\oo_{\L\sm B}\oo^-_{B_n\sm\L})\bigr|\cr
&=\bigl|\frac{\int P^-_B(d \o_B)f^{\rm f}_+(\o_B)\prod_{C\in\CC^{\rm f}_{B}(\o_B)}(1+e^{-|C\cap B_n\sm B|g_+})}
{\int P^-_B(d \o_B)\prod_{C\in\CC^{\rm f}_{B}(\o_B)}(1+e^{-|C\cap B_n\sm B|g_+})}\cr
&\hspace{2.5cm}-\frac{\int P^-_B(d \o_B)f^{\rm f}_-(\o_B)\prod_{C\in\CC^{\rm f}_{B}(\o_B)}(1+e^{|C\cap B_n\sm B|g_-})}
{\int P^-_B(d \o_B)(d \o_B)\prod_{C\in\CC^{\rm f}_{B}(\o_B)}(1+e^{|C\cap B_n\sm B|g_-})}\bigr|.
\end{split}
\end{equation}
Now we have to use a color dependent observable $f$ to exhibit lower bounds larger then zero. For example, take $f(\oo_B)=\one_{+_{\o_B}}$, then we have
\begin{equation*} 
\begin{split}
f_\pm^{\rm f}(\o_B)&=\nu^{\rm f}_B(f(\o_B,\cdot)|\hat\oo_{\L\sm B}\oo^\pm_{B_n\sm\L})=\nu^{\rm f}_B(+_{\o_B}|\hat\oo_{\L\sm B}\oo^\pm_{B_n\sm\L})
\end{split}
\end{equation*}
where for $g_-<0<g_+$,
\begin{equation*} 
\begin{split}
&\nu^{\rm f}_B(+_{\o_B}|\hat\oo_{\L\sm B}\oo^\pm_{B_n\sm\L},\o_B)=
\tfrac{\prod_{C\in\CC^{\rm f}_B(\o_B)}\big(p_t(+,+)^{|C\cap B|}+p_t(-,+)^{|C\cap B|}e^{\mp|C\cap B_n\sm B|g_{\pm}}\big)}
{\prod_{C\in\CC^{\rm f}_B(\o_B)}\big(1+e^{\mp|C\cap B_n\sm B|g_{\pm}}\big)}.
\end{split}
\end{equation*}
In particular, inserting this into \eqref{LowerBound3} we can bound \eqref{LowerBound3} from below by
\begin{equation}\label{Case3c}
\begin{split}
&\big|\int P^-_B(d \o_B)\prod_{C\in\CC^{\rm f}_B(\o_B)\sm C'}\big(p_t(+,+)^{|C\cap B|}+p_t(-,+)^{|C\cap B|}\big)\cr
&\times\big(p_t(+,+)^{|C'\cap B|}(1-e^{|C'\cap B_n\sm B|g_-})-p_t(-,+)^{|C'\cap B|}(1-e^{-|C'\cap B_n\sm B|g_+})\big)\big|.
\end{split}
\end{equation}
Similar to the asymmetric case, there exists $\d>0$ such that for sufficiently large $n$
\begin{equation*}
\begin{split}
(\frac{p_t(+,+)}{p_t(-,+)})^{|C'\cap B|}\ge\frac{p_t(+,+)}{p_t(-,+)}>\frac{1-e^{-|C'\cap B_n\sm B|g_+}}{1-e^{|C'\cap B_n\sm B|g_-}}+\d
\end{split}
\end{equation*}
and $\exp({|C'\cap B_n\sm B|g_-})<1/2$. For such $n$ we thus get as a lower bound for \eqref{Case3c}, 
\begin{equation*}
\begin{split}
&\d \int P^-_B(d \o_B)2^{-|\o_B|}p_t(-,+)^{|\o_B|}\ge\d e^{-\l_-|B|}.
\end{split}
\end{equation*}
This finishes the proof.
\end{proof}

\begin{proof}[Proof of Proposition~\ref{Gibbs_CritSym_Spec}]
The proof is a simplified version of the proof of Proposition \ref{Gibbs_Specify}. Since $t=\infty$ we can assume $f\in\FF_B^b$ to be color blind. In particular, we can follow the same steps as above with $f(\oo_B)=f(\o_B)=f^\infty(\o_B)=f^\L(\o_B)$. Then the inequality \eqref{Approx1} has the following form,
\begin{equation*}
\begin{split}
&|\g^\infty_B(f|\hat\oo_{B^{\rm c}})-\g^{+_{\L^{\rm c}}}_B(f|\hat\oo_{\L\sm B})|\cr
&\le 2\Vert f\Vert e^{\l_+|B|}\int P^+_B(d \o_B)\big|2^{|\CC^{\rm f}_{B}(\o_B\o_{\L\sm B})|}-\prod_{C\in\CC_{B}(\o_B\o_{\L\sm B}+_{\L^c})}\big(\one_{\s_{C\cap \L^{\rm c}}=+}+\one_{\s_{C\cap \L^{\rm c}}=-}\big)\big|.
\end{split}
\end{equation*}
But the r.h.s.~is zero which finishes the proof.
\end{proof}

\begin{proof}[Proof of Proposition~\ref{NonGibbs_CritSym}]
Let $t=\infty$ in the symmetric regime, $f=\one_{\es_{B}}$ and $\hat\oo\in\{B\leftrightarrow\infty\}$ then, for sufficiently large $\L$ we have
\begin{equation*}\label{LowerBound17}
\begin{split}
\bigl| \g_B^{\infty}(f |\hat\oo_{\L\sm B})-
\g_B^{\infty}(f |\hat\oo_{B^c})\bigr|&\ge e^{-\l_-|B|}\int P^-_B(d \o_B)(2^{\CC^{\rm f}_{B}(\o_B\o_{\L\sm B})}-2^{\CC^{\rm f}_{B}(\o_B\o_{B^c})})\cr
&\ge \tfrac{1}{2}e^{-\l_-|B|}\int P^-_B(d \o_B)2^{\CC^{\rm f}_{B}(\o_B\o_{\L\sm B})}\ge \tfrac{1}{2}e^{-2\l_-|B|}
\end{split}
\end{equation*}
as required.
\end{proof}

\subsection{Proofs of main theorems}\label{Proofs of main theorems} In this section we prove the theorems of Section~\ref{WRMSet}.

\begin{proof}[Proof of Theorem~\ref{Gibbs}]
By Lemma~\ref{Gibbs_Spec} and Proposition~\ref{Gibbs_Specify}, $\g^\infty$ is a specification for the time-evolved Gibbs measures. Moreover, $\g^\infty$ is quasilocal by Proposition~\ref{Gibbs_Continuity} which implies q-Gibbsianness. The more refined exponential locality in the asymmetric case of Proposition~\ref{Gibbs_Continuity} is simply recorded in Theorem~\ref{Gibbs}.
\end{proof}

\begin{proof}[Proof of Theorem~\ref{AlmostGibbs_2}] 
The idea of the proof is to compare a given $\mu_t$-a.s.~continuous specification $\tilde\g$ to the discontinuous kernel $\g^{\rm f}$ and derive a contradiction. 
Discontinuities of $\g^{\rm f}$ are based on percolating boundary conditions under a change of coloring. We therefor consider a stochastic kernel, acting only on the colors in a given configuration in the volume $\L$, given by 
$$\int M_\L(d \tilde\s_{\o_\L}|\oo)f(\oo_{\L^{\rm c}},\o_\L,\tilde\s_{\o_\L})=[\prod_{x\in \o_\L}\int q(d\tilde\s_x)]f(\oo_{\L^{\rm c}},\o_\L,\tilde\s_{\o_\L})$$ 
where $q(\s)=1/2$. In words, under $M_\L$, the color distribution on a given grey configuration $\o_\L$ is iid equidistributed.
%
We can replace $\tilde\g$ by $\g^{\rm f}$ under the $\mu_t$-integral only for non-percolating configurations. Hence, consider further the joint distribution $\bar \mu$ of the random elements $(\oo,\hat\oo,\oo^1,\oo^2)$, 
given by 
\begin{equation*}
\begin{split}
\bar \mu_t(d \oo, d\hat\oo,d \oo_{B_n\sm \L}^1, d \oo_{B_n\sm \L}^2)=\mu(d\oo)\mu_t(d\hat\oo|\oo)M_{B_n\sm \L}(d \s_{\o_{B_n\sm \L}}^1|\hat\oo)M_{B_n\sm \L}(d \s_{\o_{B_n\sm \L}}^2|\hat\oo)
\end{split}
\end{equation*}
for $\L\subset B_n$, where $\mu$ the WRM and $\mu_t(d\hat\oo|\oo)$ the independent spin-flip transition kernel. 
Note that for $\int\bar\mu_t(d \oo,d\hat\oo, d\oo_{B_n\sm \L}^1, d \oo_{B_n\sm \L}^2)f(\hat\oo)=\int\mu_t(d\hat\oo)f(\hat\oo)$.

\medskip
Recall that we write $\oo^\pm$ for configurations where all signs are fixed to be $\pm$. 
As a first step, we prove that the continuity assumption on $\tilde\g$ leads to a contradiction. As a second step, we prove that bad points for $\tilde\g$ have full mass under $\mu_t$.
Let us define the integral
\begin{equation*} \label{aber}
\begin{split}
I^\d_{\L,n}&=\int\bar\mu_t(d \oo,d\hat\oo, d\oo_{B_n\sm \L}^1, d \oo_{B_n\sm \L}^2)\g^{-1}_{\bar B_n\sm B_n}(\one_{\es_{\bar B_n\sm B_n}}|\oo)\one_{\es_{\bar B_n\sm B_n}}(\o)\one_{+_{\o_{B_n\sm \L}}}(\s^1_{\o_{B_n\sm \L}})\cr
&\quad\times\one_{-_{\o_{B_n\sm \L}}}(\s^2_{\o_{B_n\sm \L}})4^{|\o_{B_n\sm \L}|}\one_{|\tilde\g_B(f| \hat\oo_{\L\sm B}\oo^+_{B_n\sm \L}\hat\oo_{B_n^{\rm c}} )-\tilde\g_B(f| \hat\oo_{\L\sm B}\oo^-_{B_n\sm \L}\hat\oo_{B_n^{\rm c}} )|>\d}\cr
&=\int\mu(d\oo)\int\mu_t(d\hat\oo|\oo)g_n(\oo)\one_{|\tilde\g_B(f| \hat\oo_{\L\sm B}\oo^+_{B_n\sm \L}\hat\oo_{B_n^{\rm c}} )-\tilde\g_B(f| \hat\oo_{\L\sm B}\oo^-_{B_n\sm \L}\hat\oo_{B_n^{\rm c}} )|>\d}.
\end{split}
\end{equation*}
where $g_n(\oo)=\g^{-1}_{\bar B_n\sm B_n}(\{\es_{\bar B_n\sm B_n}\}|\oo)\one_{\es_{\bar B_n\sm B_n}}(\o)$ is an integrable density with $\g$ the specification of the WRM. The indicator in $g_n$, which decouples $B_n$ from $B_n^{\rm c}$, will later allow us to replace $\tilde\g$ by $\g^{\rm f}$. By the continuity assumption on $\tilde\g$, we have 
\begin{equation*} \label{aber}
\begin{split}
I^\d_{\L,n}&\le\int\mu(d\oo)g_n(\oo)\int\mu_t(d\hat\oo_{\L\sm B}|\oo)\one_{\sup_{\oo^{1,2}}|\tilde\g_B(f|\hat \oo_{\L\sm B}\oo^1_{\L^{\rm c}})-\tilde\g_B(f| \hat\oo_{\L\sm B}\oo^2_{\L^{\rm c}})|>\d}\cr
&=\int\mu(d\oo)\g_{\bar B_n\sm B_n}\Big(g_n\int\mu_t(d\hat\oo_{\L\sm B}|\cdot)\one_{\sup_{\oo^{1,2}}|\tilde\g_B(f|\hat \oo_{\L\sm B}\oo^1_{\L^{\rm c}})-\tilde\g_B(f| \hat\oo_{\L\sm B}\oo^2_{\L^{\rm c}})|>\d}|\oo\Big)\cr
&=\int\mu(d\oo)\g_{\bar B_n\sm B_n}(g_n|\oo)\int\mu_t(d\hat\oo_{\L\sm B}|\oo)\one_{\sup_{\oo^{1,2}}|\tilde\g_B(f|\hat \oo_{\L\sm B}\oo^1_{\L^{\rm c}})-\tilde\g_B(f| \hat\oo_{\L\sm B}\oo^2_{\L^{\rm c}})|>\d}\cr
&=\int\mu_t(d\hat\oo)\one_{\sup_{\oo^{1,2}}|\tilde\g_B(f|\hat \oo_{\L\sm B}\oo^1_{\L^{\rm c}})-\tilde\g_B(f| \hat\oo_{\L\sm B}\oo^2_{\L^{\rm c}})|>\d}
\end{split}
\end{equation*}
where in the second last step, we pulled out the integral in $\g_{\bar B_n\sm B_n}$ using properness. By dominated convergence, using the assumed continuity of $\tilde\g$, this tends to zero as $\L$ tends to $\R^d$ for all $\d>0$ and $f\in\FF^b$.  

\medskip
In order to derive a contradiction, note that since $\mu_t$-a.s.~on the decoupling event $\{\es_{\bar B_n\sm B_n}\}$ we have $\tilde\g_B=\g^{\rm f}_B$. 
Using Proposition~\ref{NonGibbs_2}, we can now replace $\tilde\g_B$ by the kernel $\g^{\rm f}_B$ in $I_{\L, n}$. 
We want to bound $I_{\L, n}$ from below away from zero hence eliminate the indicator comparing $\gamma^{\rm f}$ with different boundary conditions. For this we use Proposition~\ref{NonGibbs} which is applicable once the conditions of a minimal distance and bounded particle numbers are satisfied. More precisely, 
by Proposition~\ref{NonGibbs}, for all $L>0$, some $f\in\FF$, $\d>0$ and sufficiently large $n$ we can estimate 
\begin{equation*} \label{aber}
\begin{split}
I^\d_{\L,n}&=\int\mu(d\oo)\int\mu_t(d\hat\oo|\oo)g_n(\oo)\one_{|\g^{\rm f}_B(f|\hat \oo_{\L\sm B}\oo^+_{B_n\sm \L})-\g^{\rm f}_B(f| \hat\oo_{\L\sm B}\oo^-_{B_n\sm \L})|>\d}\cr
&\ge\int\mu(d\oo)\int\mu_t(d\hat\oo|\oo)g_n(\oo)\one_{\{B \leftrightarrow B_n^{\rm c}\}}(\oo)\one_{\{|\o_{\L\setminus B}|<K\}}(\oo)\cr
&\hspace{4.8cm}\times\one_{|\g^{\rm f}_B(f|\hat \oo_{\L\sm B}\oo^+_{B_n\sm \L})-\g^{\rm f}_B(f| \hat\oo_{\L\sm B}\oo^-_{B_n\sm \L})|>\d}\cr
&\ge\int\mu(d\oo)\one_{\{B \leftrightarrow B_n^{\rm c}\}}(\oo)\one_{\{|\o_{\L\setminus B}|<K\}}(\oo)\cr
&\ge\int\mu(d\oo)\one_{\{B \leftrightarrow\infty\}}(\oo)\one_{\{|\o_{\L\setminus B}|<K\}}(\oo)
\end{split}
\end{equation*}
where in the second estimate we again also used the DLR equation w.r.t.~$\mu$ and properness to eliminate $g_n$. 
Since this is true for all $L>0$ and by assumption $\mu(\{B \leftrightarrow \infty\})>0$, we arrive at the desired contradiction.

As for the almost-sure discontinuity, note that $\lim_{\D\uparrow\R^d}\mu(\{\D \leftrightarrow \infty\})=1$ and thus, for sufficiently large $\D$, \begin{equation*} \label{aber}
\begin{split}
\lim_{\L\uparrow\R^d}\int\mu_t(d\hat\oo)\one_{\sup_{\oo^{1,2}}|\tilde\g_\D(f|\hat \oo_{\L\sm\D}\oo^1_{\L^{\rm c}})-\tilde\g_\D(f| \hat\oo_{\L\sm\D}\oo^2_{\L^{\rm c}})|>\d}>1-\e
\end{split}
\end{equation*}
for any specification $\tilde\g$ of $\mu_t$. From this we see that the set of bad configurations for $\tilde\g$ even has full mass under $\mu_t$.
\end{proof}

\begin{proof}[Proof of Theorem~\ref{AlmostGibbs}]
For the asq-Gibbsian part, by Lemma~\ref{Gibbs_Spec} and Proposition~\ref{AlmostGibbs_1}, $\g^\infty$ is a specification for $\mu_t$ and $0<t\le\infty$ respectively $\mu^+_t$ and $0<t\le t_G$ in the low-intensity regime. By Lemma~\ref{Almost_Gibbs_Continuity} we have $\mu_t(\OO(\g^\infty))=1$, respectively $\mu^+_t(\OO(\g^\infty))=1$, this implies asq-Gibbsianness.

\medskip
For the non-q-Gibbsian part, the idea of the proof is to exhibit a boundary condition consisting of a unique infinite cluster attached to $B$. We consider two randomizations of this boundary configurations, first w.r.t.~the Lebesgues measures and second w.r.t.~$\mu$. This allows us to first replace any given specification $\tilde\g$ by our known partial specification $\g^{\rm f}$ which is discontinuous at any such boundary condition. Second, using Lebesgue's density theorem we have then deduced that $\tilde\g$ can not be quasilocal for all such boundary conditions. More precisely, let $\tilde\g$ be a given specification for $\mu_t$. We show existence of a configuration $\hat\oo$ such that 
\begin{equation*} \label{aber}
\begin{split}
\limsup_{\L\uparrow\R^d}\sup_{\oo^{1},\oo^2\in\OO}\big|\tilde\g_B(f|\hat \oo_{\L\sm B}\oo^1_{\L^{\rm c}})-\tilde\g_B(f| \hat\oo_{\L\sm B}\oo^2_{\L^{\rm c}})\big|>0.
\end{split}
\end{equation*}
Let us define $\etaeta=(\eta,+_{\eta})$ with 
$$\eta=\{x\in\R^d:\, x_1=na/2 \text{ for some }n\in\N_0 \text{ and }x_i=0\text{ for }2\le i\le d\}.$$
In particular, $\eta$ consists of a unique cluster in $\{B \leftrightarrow \infty\}$. Define a $\e$-vicinity of $\etaeta$ by
$$V_\e(\eta)=\{\oo\in\OO:\text{ for all }x\in\eta \text{ there exists exactly one }y\in\o\text{ such that }|y-x|<\e\}$$
and note that for $0<\e<a/4$, we have $V_\e(\eta)\subset\{B \leftrightarrow \infty\}$. See Figure~\ref{Line} for an illustration.

\begin{figure}[!htpb]
\centering
\begin{tikzpicture}[scale=1.0]

\draw (-6.5,0) -- (6,0);
\draw (-6,-1.5) -- (-6,1.5);

\foreach \i in {-12,...,10}
{
\fill[yellow!50!white,opacity=0.3] (\i/2,0) circle (1cm);
\draw[black,opacity=0.2]  (\i/2,0) circle (1cm);
}

\foreach \i in {-12,-11,-9,-5,-3,0,6,8}
{
\fill[blue!50!white,opacity=0.3] (\i/2-0.1,0.5) circle (1cm);
\draw[black,opacity=0.2] (\i/2-0.1,0.5) circle (1cm);
}

\foreach \i in {-10,-7,-6,-1,1,3,10}
{
\fill[blue!50!white,opacity=0.3] (\i/2+0.2,0.1) circle (1cm);
\draw[black,opacity=0.2] (\i/2+0.2,0.1) circle (1cm);
}

\foreach \i in {-8,-4,-2,2,4,5,7,9}
{
\fill[blue!50!white,opacity=0.3]  (\i/2+0.05,-0.2) circle (1cm);
\draw[black,opacity=0.2] (\i/2+0.05,-0.2) circle (1cm);
}


%
%
%
%
%
%
%
%
%
%
%

\end{tikzpicture}
\caption{Illustration of the configuration $\eta$ in yellow and a pertubation in $V_\e(\eta)$ in blue.}
\label{Line}
\end{figure}
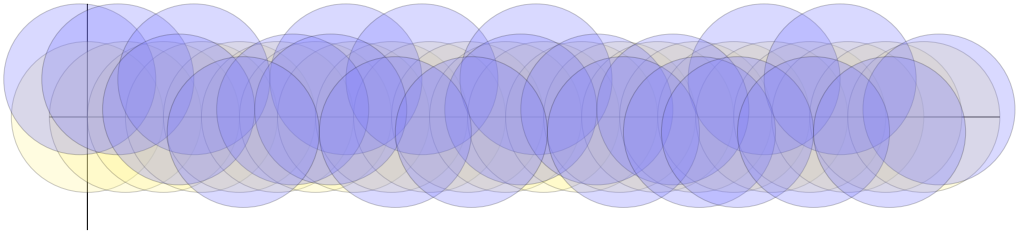

\medskip
{\bf The non-critical case:} 
Let $0<t<\infty$ for the symmetric case or $0<t<t_G$ for the asymmetric case and let
$$g^n_\e[\xi](\oo)=\one_{\es_{\bar B_n\sm B_n}}(\oo)\one_{V_\e(\xi_{B_n})}(\oo)\g^{-1}_{\bar B_n}(V_\e(\xi_{B_n})\cap \{\es_{\bar B_n\sm B_n}\}|\oo)$$
and $V=V_{a/8}$. We consider the integral
\begin{equation*} \label{aber}
\begin{split}
I_{\L,n}&=|V(\eta_{B_n})|^{-1}\int d\xi\one_{V(\eta_{B_n})}(\xi)\int\mu(d\oo)\int\mu_t(d\hat\oo|\oo)g^n_\e[\xi](\oo)\cr
&\qquad\times\Big|\tilde\g_B(f|\oo^+_{\L\sm B}\oo^+_{B_n\sm \L}\hat\oo_{B_n^{\rm c}})-\tilde\g_B(f|\oo^+_{\L\sm B}\oo^-_{B_n\sm \L}\hat\oo_{B_n^{\rm c}})\Big|\cr
&=|V(\eta_{B_n})|^{-1}\int d\xi\one_{V(\eta_{B_n})}(\xi)\int\mu(d\oo)g^n_\e[\xi](\oo)\cr
&\qquad\times\Big|\g^{\rm f}_B(f|\oo^+_{\L\sm B}\oo^+_{B_n\sm \L})-\g^{\rm f}_B(f| \oo^+_{\L\sm B}\oo^-_{B_n\sm \L})\Big|
\end{split}
\end{equation*}
where we additionally randomize the target configuration $\eta$ by $\xi$ drawn from the Lebesgue measure on $\R^d$.
Then by Proposition~\ref{NonGibbs}, for all $L>0$, some $f\in\FF$, $\d>0$ and sufficiently large $n$ we can estimate
\begin{equation*} \label{aber}
\begin{split}
I_{\L,n}\ge\d|V(\eta_{B_n})|^{-1}\int d\xi\one_{V(\eta_{B_n})}(\xi)\int\mu(d\oo)g^n_\e[\xi](\oo)\one_{\{|\o_{\L\setminus B}|<L\}}(\oo).
\end{split}
\end{equation*}
Assuming $n$ to be even larger, also the indicator $\one_{\{|\o_{\L\setminus B}|<L\}}$ can be dropped, since $V_\e(\etaeta)$ constrains the number of points $|\o_{\L\setminus B}|$. This implies $I_{\L,n}\ge\d$ for all $a/8>\e>0$ and $n$ larger then some $n(\L)$.
On the other hand, 
\begin{equation*} \label{aber}
\begin{split}
I_{\L,n}&=|V(\eta_{B_n})|^{-1}\int d\xi\one_{V(\eta_{B_n})}(\xi)\int\mu(d\oo)g^n_\e[\xi](\oo)\int\mu_t(d\hat\oo|\oo)\cr
&\qquad\times\Big|\tilde\g_B(f|\oo^+_{\L\sm B}\oo^+_{B_n\sm \L}\hat\oo_{B_n^{\rm c}})-\tilde\g_B(f| \oo^+_{\L\sm B}\oo^-_{B_n\sm \L}\hat\oo_{B_n^{\rm c}})\Big|\cr
&\le|V(\eta_{B_n})|^{-1}\int d\xi\one_{V(\eta_{B_n})}(\xi)\int\mu(d\oo)g^n_\e[\xi](\oo)\cr
&\qquad\times\sup_{\oo^{1,2}}\Big|\tilde\g_B(f|\oo^+_{\L\sm B}\oo^1_{\L^{\rm c}})-\tilde\g_B(f|\oo^+_{\L\sm B}\oo^2_{\L^{\rm c}})\Big|\cr
&=|V(\eta_{B_n})|^{-1}\int d\xi\one_{V(\eta_{B_n})}(\xi)\int\mu(d\oo)\g_{\bar B_n}\big(g^n_\e[\xi]\tilde f\big|\oo\big)
\end{split}
\end{equation*}
where we wrote $\tilde f(\o)=\sup_{\oo^{1,2}}\Big|\tilde\g_B(f|\oo^+_{\L\sm B}\oo^1_{\L^{\rm c}})-\tilde\g_B(f| \oo^+_{\L\sm B}\oo^2_{\L^{\rm c}})\big|$. Note that $\o\mapsto\tilde f(\o)$ is $\FF_{\L\sm B}$-measurable, since the integral is w.r.t.~the spin flip only. We can further calculate for any $\oo'$
\begin{equation*} \label{aber}
\begin{split}
\g_{\bar B_n}(g^n_\e[\xi]\tilde f|\oo')&=\frac{\int P_{\bar B_n}(d\o)\one_{\es_{\bar B_n\sm B_n}}(\o)\one_{V_\e(\xi_{B_n})}(\o)\tilde f(\o_{\L\sm B})W_{B_n}(\o)}{\int P_{\bar B_n}(d\o)\one_{\es_{\bar B_n\sm B_n}}(\o)\one_{V_\e(\xi_{B_n})}(\o)W_{B_n}(\o)}\cr
&=|B_\e|^{-|\xi_{B_n}|}[\prod_{x\in\xi_{B_n}}\int_{B_\e(x)}] d\o\tilde f(\o_{\L\sm B})\cr
&=|B_\e|^{-|\xi_{\L\sm B}|}[\prod_{x\in\xi_{\L\sm B}}\int_{B_\e(x)}] d\o\tilde f(\o_{\L\sm B})
\end{split}
\end{equation*}
where we used that the dependents on $\oo'$ can be dropped due to the decoupling event, the measurability of $\tilde f$ and the internal color constraint $W_{B_n}$ is constant on $V_\e(\eta)$.
Thus we arrive at the estimate
\begin{equation*} \label{aber}
\begin{split}
I_{\L,n}&\le|V(\eta_{\L\sm B})|^{-1}\int d\xi\one_{V(\eta_{\L\sm B})}(\xi)|B_\e|^{-|\xi_{\L\sm B}|}[\prod_{x\in\xi_{\L\sm B}}\int_{B_\e(x)}] d\o\tilde f(\o_{\L\sm B}).
\end{split}
\end{equation*}
By Lebesgue's differentiation theorem, the set, 
\begin{equation*} \label{aber}
\begin{split}
\{\xi:\, \limsup_{\e\downarrow0}|B_\e|^{-|\xi_{\L\sm B}|}[\prod_{x\in\xi_{\L\sm B}}\int_{B_\e(x)}] d\o\tilde f(\o)\neq\tilde f(\xi)\}
\end{split}
\end{equation*}
has Lebesgue measure zero. Hence, using the lower bound, derived above, 
\begin{equation*} \label{aber}
\begin{split}
\d\le I_{\L,n}\le|V(\eta)|^{-1}\int d\xi\one_{V(\eta)}(\xi)\tilde f(\xi_{\L\sm B}).
\end{split}
\end{equation*}
Finally, if $\lim_{\L\uparrow\R^d}\tilde f(\xi_{\L\sm B})=0$ for all $\xi$, by dominated convergence, the r.h.s.~would tend to zero, which leads to a contradiction. Hence there exists $\xi\in V(\eta)$ such that $\lim_{\L\uparrow\R^d}\tilde f(\xi_{\L\sm B})>0$, as required.

\medskip
{\bf The critical asymmetric case:} 
Using again the kernel $M$ we have with $f=\one_{\es_B}$ that
\begin{equation*} \label{aber}
\begin{split}
I_{\L,n}&=|V(\eta_{B_{2n}})|^{-1}\int d\xi\one_{V(\eta_{B_{2n}})}(\xi)\int\mu(d\oo)\int\mu_{t_G}(d\hat\oo|\oo)g^{2n}_\e[\xi](\oo)\cr
&\qquad\times\Big|\tilde\g_B(f|\oo^-_{B_{2n}\sm B}\es_{\bar B_{2n}\sm B_{2n}}\hat\oo_{(\bar B_{2n})^{\rm c}})-\tilde\g_B(f|\oo^-_{B_n\sm B}\oo^+_{B_{2n}\sm B_n}\es_{\bar B_{2n}\sm B_{2n}}\hat\oo_{(\bar B_{2n})^{\rm c}})\Big|\cr
&=|V(\eta_{B_{2n}})|^{-1}\int d\xi\one_{V(\eta_{B_{2n}})}(\xi)\int\mu(d\oo)g^{2n}_\e[\xi](\oo)\cr
&\qquad\times\Big|\g^{\rm f}_B(f|\oo^-_{B_{2n}\sm B})-\g^{\rm f}_B(f|\oo^-_{B_n\sm B}\oo^+_{B_{2n}\sm B_n})\Big|\cr
&\ge e^{-3\l_+|B|} |V(\eta_{B_{2n}})|^{-1}\int d\xi\one_{V(\eta_{B_{2n}})}(\xi)\int\mu(d\oo)g^{2n}_\e[\xi](\oo)
\int P^+_B(d \o_B)(1-\a^{-2n/a})\cr
&= e^{-3\l_+|B|} \int P^+_B(d \o_B)(1-\a^{-2n/a})\ge e^{-4\l_+|B|}
\end{split}
\end{equation*}
for sufficiently large $n$. On the other hand,
\begin{equation*} \label{aber}
\begin{split}
I_{\L,n}&=|V^-(\eta_{B_{2n}})|^{-1}\int d\xi\one_{V(\eta_{B_{2n}})}(\xi)\int\mu(d\oo)\int\mu_{t_G}(d\hat\oo|\oo)g^{2n}_\e[\xi](\oo)\cr
&\qquad\times\Big|\tilde\g_B(f|\oo^-_{B_{2n}\sm B}\es_{\bar B_{2n}\sm B_{2n}}\hat\oo_{(\bar B_{2n})^{\rm c}})-\tilde\g_B(f|\oo^-_{B_n\sm B}\oo^+_{B_{2n}\sm B_n}\es_{\bar B_{2n}\sm B_{2n}}\hat\oo_{(\bar B_{2n})^{\rm c}})\Big|\cr
&\le |V^-(\eta_{B_{2n}})|^{-1}\int d\xi\one_{V(\eta_{B_{2n}})}(\xi)\int\mu(d\oo)g^{2n}_\e[\xi](\oo)\cr
&\qquad\times\sup_{\oo^{1,2}}\Big|\tilde\g_B(f|\oo^-_{B_{n}\sm B}\oo^1_{B_{n}^{\rm c}})-\tilde\g_B(f|\oo^-_{B_n\sm B}\oo^2_{B_{n}^{\rm c}})\Big|.
\end{split}
\end{equation*}
As above, we can further calculate for any $\oo'\in\OO$,
\begin{equation*} \label{aber}
\begin{split}
\g_{\bar B_{2n}}&(g^n_\e[\xi]\tilde f|\oo')=\frac{\int P_{\bar B_{2n}}(d\o)\one_{\es_{\bar B_{2n}\sm B_{2n}}}(\o)\one_{V_\e(\xi_{B_{2n}})}(\o)\tilde f(\o_{B_n\sm B})W_{B_{2n}}(\o)}{\int P_{\bar B_{2n}}(d\o)\one_{\es_{\bar B_{2n}\sm B_{2n}}}(\o)\one_{V_\e(\xi_{B_{2n}})}(\o)W_{B_{2n}}(\o)}\cr
&=\frac{\int P_{B_{n}}(d\o)\one_{V_\e(\xi_{B_{n}})}(\o)\tilde f(\o_{B_n\sm B})}{\int P_{B_{n}}(d\o)\one_{V_\e(\xi_{B_{n}})}(\o)}=|B_\e|^{-|\xi_{B_n\sm B}|}[\prod_{x\in\xi_{B_n\sm B}}\int_{B_\e(x)}] d\o\tilde f(\o_{B_n\sm B})
\end{split}
\end{equation*}
which again leads to the existence of a point of discontinuity of $\tilde\g$ via Lebesgue's density theorem.

\medskip
{\bf The critical symmetric case:} 
This case is different to the pervious cases since discontinuities can not be produced by color perturbations on finite volumes. Rather discontinuities can for example come from cutting off infinite clusters which form a nullset in the low-intensity regime. But discontinuities can also be produced by glueing together two separate clusters and therefor reduce the number of clusters attached to $B$. 
Since this must be possible arbitrarily far away from $B$, we have to assume that the boundary condition contains two distinguished infinite clusters connectable to $B$ which is of course a nullset as well. 
One way of marking this precise is the following. 
Instead of $V_\e(\etaeta)$ consider the two-arm cluster
$$\bar\eta=\{x\in\R^d:\, x_i=na/2 \text{ for some }n\in\N_0 \text{ and }i\in\{1,2\}\text{ and }x_j=0\text{ for }3\le i\le d\}.$$
In particular, $\bar\eta$ consists of two clusters in $\{B \leftrightarrow \infty\}$. 
Note that for $0<\e<a/4$, again we have $V_\e(\bar\eta)\subset\{B \leftrightarrow \infty\}$ with two infinite clusters. In this case, the kernel $M$ is not required since we do not need a change of colorings. Instead define 
\begin{equation*} \label{aber}
\begin{split}
\zeta_n=\{x\in\R^d:\, &x_1,x_2>0, \sqrt{x_1^2+x_2^2}=n, \arctan\tfrac{x_1}{x_2}=m\tfrac{\pi n}{a}\cr
&\text{ for some } m\in\N\text{ with }0\le m\le na/2  \text{ and }x_j=0\text{ for }3\le i\le d\},
\end{split}
\end{equation*}
the gray configuration which has points along the two-dimensional boundary of $B_n$ discretized with mesh size  $a/2$. Figure~\ref{Line2} shows an illustration.

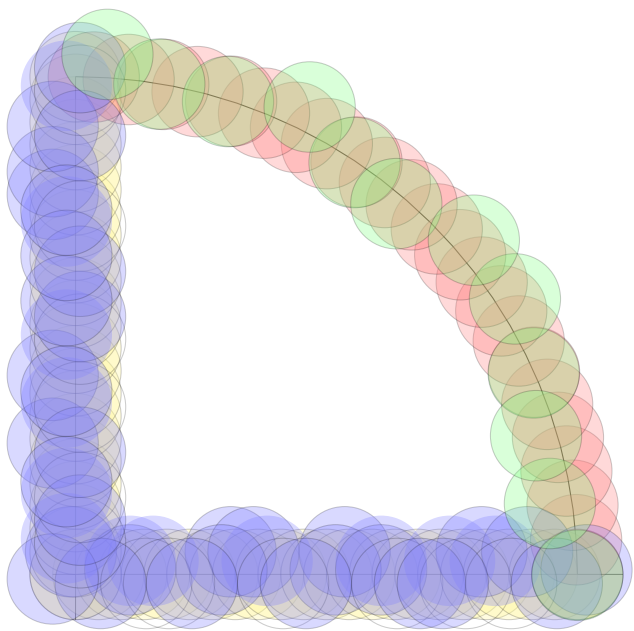
\begin{figure}[!htpb]
\centering
\begin{tikzpicture}[scale=0.6]


\draw [black,domain=0:90] plot ({11*cos(\x)-6}, {11*sin(\x)});

\draw (-6.5,0) -- (6,0);
\draw (-6,-1) -- (-6,11);

\foreach \i in {-12,...,10}
{
\fill[yellow!50!white,opacity=0.3] (\i/2,0) circle (1cm);
\draw[black,opacity=0.2]  (\i/2,0) circle (1cm);
}

\foreach \i in {0,...,22}
{
\fill[yellow!50!white,opacity=0.3] (-6,\i/2) circle (1cm);
\draw[black,opacity=0.2]  (-6,\i/2) circle (1cm);
}

\foreach \i in {0,...,22}
{
\fill[red!50!white,opacity=0.3] ({11*cos(3.14*\i*(28/22))-6}, {11*sin(3.14*\i*(28/22))}) circle (1cm);
\draw[black,opacity=0.2]  ({11*cos(3.14*\i*(28/22))-6}, {11*sin(3.14*\i*(28/22))}) circle (1cm);
}

\foreach \i in {-12,-5,-4,0,6,8}
{
\fill[blue!50!white,opacity=0.3] (\i/2-0.1,0.5) circle (1cm);
\draw[black,opacity=0.2] (\i/2-0.1,0.5) circle (1cm);
}

\foreach \i in {-10,-6,-1,1,10}
{
\fill[blue!50!white,opacity=0.3] (\i/2+0.2,0.1) circle (1cm);
\draw[black,opacity=0.2] (\i/2+0.2,0.1) circle (1cm);
}

\foreach \i in {-9,-8,-3,2,5,7}
{
\fill[blue!50!white,opacity=0.3]  (\i/2-0.3,0.3) circle (1cm);
\draw[black,opacity=0.2] (\i/2+0.05,-0.2) circle (1cm);
}

\foreach \i in {-11,-7,-2,3,4,9}
{
\fill[blue!50!white,opacity=0.3]  (\i/2+0.05,-0.2) circle (1cm);
\draw[black,opacity=0.2] (\i/2+0.05,-0.2) circle (1cm);
}
\foreach \i in {0,6,9,17,18,20}
{
\fill[blue!50!white,opacity=0.3] (-6.5,\i/2-0.1) circle (1cm);
\draw[black,opacity=0.2] (-6.5,\i/2-0.1) circle (1cm);
}

\foreach \i in {1,3,7,10,15,21}
{
\fill[blue!50!white,opacity=0.3] (-6.2,\i/2+0.3) circle (1cm);
\draw[black,opacity=0.2] (-5.9,\i/2+0.2) circle (1cm);
}

\foreach \i in {2,5,11,13,19,22}
{
\fill[blue!50!white,opacity=0.3] (-5.9,\i/2+0.2) circle (1cm);
\draw[black,opacity=0.2] (-5.9,\i/2+0.2) circle (1cm);
}

\foreach \i in {4,8,12,14,16}
{
\fill[blue!50!white,opacity=0.3]  (-6.2,\i/2+0.05) circle (1cm);
\draw[black,opacity=0.2] (-6.2,\i/2+0.05) circle (1cm);
}

\foreach \i in {0,6,14,18}
{
\fill[green!50!white,opacity=0.3] ({11.1*cos(3.14*\i*(28/22))-6.1}, {11*sin(3.14*\i*(28/22)-0.1)}) circle (1cm);
\draw[black,opacity=0.2]  ({11.1*cos(3.14*\i*(28/22))-6.1}, {11*sin(3.14*\i*(28/22)-0.1)}) circle (1cm);
}
\foreach \i in {2,4,12,20}
{
\fill[green!50!white,opacity=0.3] ({10.5*cos(3.14*\i*(28/22))-6}, {11*sin(3.14*\i*(28/22)+0.2)}) circle (1cm);
\draw[black,opacity=0.2]  ({10.5*cos(3.14*\i*(28/22))-6}, {11*sin(3.14*\i*(28/22)+0.2)}) circle (1cm);
}
\foreach \i in {8,10,16,22}
{
\fill[green!50!white,opacity=0.3] ({11*cos(3.14*\i*(28/22))-5.7}, {11.5*sin(3.14*\i*(28/22))}) circle (1cm);
\draw[black,opacity=0.2]  ({11*cos(3.14*\i*(28/22))-5.7}, {11.5*sin(3.14*\i*(28/22))}) circle (1cm);
}
\end{tikzpicture}
\caption{Illustration of the configuration $\bar\eta$ in yellow together with $\zeta_n$ in red. The purtubation in $V_\e(\bar\eta\cup\zeta_n)$ are indicated in blue and green.}
\label{Line2}
\end{figure}

In particular, for all $n\in\N$, $\zeta_n$ connects the two clusters in $\bar\eta$.
Define the density
$$g^n_\e[\xi](\oo)=\one_{\es_{\bar B_n\sm B_n}}(\oo)\one_{V_\e(\xi_{B_n})}(\oo)\g^{-1}_{B_n}(V_\e(\xi_{B_n})\cap\{{\es_{\bar B_n\sm B_n}}\}|\oo)$$
and $V=V_{a/8}$ and consider the integral
\begin{equation*} \label{aber}
\begin{split}
I_{\L,n}&=|V(\bar\eta_{B_{n}\cup\zeta_n})|^{-1}\int d\xi\one_{V(\bar\eta_{B_{n}\cap\zeta_n})}(\xi)\int\mu(d\oo)\int\mu_t(d\hat\oo|\oo)g^{n}_\e[\xi](\oo)\cr
&\qquad\times\big|\tilde\g_B(f|\hat\oo_{B_n\sm B})-\tilde\g_B(f|\hat\oo_{B^o_{n}\sm B})\big|\cr
&=|V(\bar\eta_{B_{n}\cup\zeta_n})|^{-1}\int d\xi\one_{V(\bar\eta_{B_{n}\cap\zeta_n})}(\xi)\int\mu(d\oo)\int\mu_t(d\hat\oo|\oo)g^{n}_\e[\xi](\oo)\cr
&\qquad\times\big|\g^{\infty}_B(f|\hat\oo_{B_n\sm B})-\g^{\infty}_B(f|\hat\oo_{B^o_{n}\sm B})\big|
\end{split}
\end{equation*}
where we could replace the specifications using a similar argument as in \eqref{CutoffId}. 
Then, for $f=\one_{\es_B}$ we have 
\begin{equation*}
\begin{split}
\big|\g^{\infty}_B(f|\hat\oo_{B_n\sm B})-\g^{\infty}_B(f|\hat\oo_{B^o_{n}\sm B})\big|
&\ge e^{-3\l_+|B|}\int P^+_B(d \o_B)(2^{|\CC^{\rm f}_{B}(\o_B\o_{B^o_n\sm B})|}-2^{|\CC^{\rm f}_{B}(\o_B\o_{B_n\sm B})|}).
\end{split}
\end{equation*}
Note that in $B_n$, the two arms of 
$\hat\oo$ are closed and hence, the number of clusters attached to $B$ is reduced to one.
Introducing the indicator, that there is exactly two points in the subregion of $B$ which guarantee connectedness with both infinite components in $\o_{B^c}$ but does not connect them inside $B$ gives the lower bound
\begin{equation*}
\begin{split}
\big|\g^{\infty}_B(f|\hat\oo_{B_n\sm B})-\g^{\infty}_B(f|\hat\oo_{B^o_{n}\sm B})\big|
\ge \d^2e^{-4\l_+|B|}\l^2_+>0.
\end{split}
\end{equation*}

On the other hand,
\begin{equation*} \label{aber}
\begin{split}
I_{\L,n}&=|V(\bar\eta_{B_{n}\cup\zeta_n})|^{-1}\int d\xi\one_{V(\bar\eta_{B_{n}\cap\zeta_n})}(\xi)\int\mu(d\oo)\int\mu_t(d\hat\oo|\oo)g^{n}_\e[\xi](\oo)\cr
&\qquad\times\Big|\tilde\g_B(f|\hat\oo_{B_n\sm B})-\tilde\g_B(f|\hat\oo_{B^o_{n}\sm B})\Big|\cr
&\le |V(\bar\eta_{B_{n}\cup\zeta_n})|^{-1}\int d\xi\one_{V(\bar\eta_{B_{n}\cap\zeta_n})}(\xi)\int\mu(d\oo)\int\mu_t(d\hat\oo|\oo)g^{n}_\e[\xi](\oo)\cr
&\qquad\times\sup_{\oo^{1,2}}\Big|\tilde\g_B(f|\hat\oo_{B^o_{n}\sm B}\oo^1_{(B^c_{n})^o})-\tilde\g_B(f|\hat\oo_{B^o_{n}\sm B}\oo^2_{(B^c_{n})^o})\Big|
\end{split}
\end{equation*}
where the part in $g_\e^n[\xi]$ involving $\zeta_n$ can be integrated out. 
As above, we can further calculate for any $\oo'$
\begin{equation*} \label{aber}
\begin{split}
\g_{B^o_{n}}(g^n_\e[\xi]\tilde f|\oo')&=\frac{\int P_{B^o_{n}}(d\o)\one_{V_\e(\xi_{B^o_{n}})}(\o)\tilde f(\o_{B^o_{n}\sm B})W_{B^o_{n}}(\o)}{\int P_{B^o_{n}}(d\o)\one_{V_\e(\xi_{B^o_{n}})}(\o)W_{B^o_{n}}(\o)}\cr
&=\frac{\int P_{B^o_{n}}(d\o)\one_{V_\e(\xi_{B^o_{n}})}(\o)\tilde f(\o_{B^o_{n}\sm B})}{\int P_{B^o_{n}}(d\o)\one_{V_\e(\xi_{B^o_{n}})}(\o)}\cr
&=|B_\e|^{-|\xi_{B_n\sm B}|}[\prod_{x\in\xi_{B^o_{n}\sm B}}\int_{B_\e(x)}] d\o\tilde f(\o_{B^o_{n}\sm B})
\end{split}
\end{equation*}
which again leads to the existence of a point of discontinuity of $\tilde\g$ via Lebesgue's density theorem.
\end{proof}

\begin{proof}[Proof of Theorem~\ref{AlmostGibbs_Crit}]
For the asq-Gibbsian part, by Proposition~\ref{Gibbs_Specify}, $\g^\infty$ is a specification for $\mu^+_{t_G}$ which is concentrated on $\OO^{m_{t_G}}$ by Lemma~\ref{Almost_Gibbs_Crit_Conc}. But by Proposition~\ref{Almost_Gibbs_Continuity_Crit}, $\mu^+_{t_G}(\OO(\g^\infty))=1$ and thus $\mu^+_{t_G}$ is asq-Gibbs.

\medskip
As for the non-asq-Gibbsian part, we consider the symmetric regime with $t=\infty$. First note that, similar to the above for some given specification $\tilde\g$, using Proposition~\ref{Gibbs_CritSym_Spec} we have
\begin{equation} \label{CutoffId}
\begin{split}
\int&\mu^+_\infty(d\hat\oo)\tilde\g_B(f| \hat\oo_{B_n\sm B}\es_{\bar B_n\sm B_n}\hat\oo_{(\bar B_n)^c})\cr
&=\int\mu^+(d\oo)\g^{-1}_{\bar B_n\sm B_n}(\one_{\es_{\bar B_n\sm B_n}}|\oo)\one_{\es_{\bar B_n\sm B_n}}(\o)\int\mu_\infty(d\hat\oo|\oo)\tilde\g_B(f| \hat\oo_{B^c})\cr
&=\int\mu^+(d\oo)\g^{-1}_{\bar B_n\sm B_n}(\one_{\es_{\bar B_n\sm B_n}}|\oo)\one_{\es_{\bar B_n\sm B_n}}(\o)\int\mu_\infty(d\hat\oo|\oo)\g^{\infty}_B(f| \hat\oo_{B_n\sm B})\cr
&=\int\mu^+_\infty(d\hat\oo)\g^{\infty}_B(f| \hat\oo_{B_n\sm B}).
\end{split}
\end{equation}
Hence we have on the one hand,
\begin{equation*} \label{aber}
\begin{split}
\int&\mu^+_\infty(d\hat\oo)\one_{\{|\tilde\g_B(f|\oo^-_{B_n\sm B}\es_{\bar B_n\sm B_n}\hat\oo_{(\bar B_n)^c})-\tilde\g_B(f|\hat\oo_{B^c})|>\d\}}\cr
&\le\int\mu^+_\infty(d\hat\oo)\one_{\{\sup_{\oo^{1,2}\in\OO}|\tilde\g_B(f|\hat\oo_{B_n\sm B}\oo^1_{B_n^c})-\tilde\g_B(f|\hat\oo_{B_n\sm B}\oo^2_{B_n^c})|>\d\}}
\end{split}
\end{equation*}
which tends to zero as $n$ tends to infinity if we assume $\tilde\g$ to be almost-surely quasilocal. On the other hand, by Propositions~\ref{Gibbs_CritSym_Spec} and~\ref{NonGibbs_CritSym}, there exists $\d>0$ and $f\in\FF^b$ such that for sufficiently large $n$ we have
\begin{equation*} \label{aber}
\begin{split}
\int&\mu^+_\infty(d\hat\oo)\one_{\{|\tilde\g_B(f|\hat\oo_{B_n\sm B}\es_{\bar B_n\sm B_n}\hat\oo_{(\bar B_n)^c})-\tilde\g_B(f|\hat\oo_{B^c})|>\d\}}\cr
&=\int\mu^+_\infty(d\hat\oo)\one_{\{|\g^{\infty}_B(f|\hat\oo_{B_n\sm B})-\g^{\infty}_B(f|\hat\oo_{B^c})|>\d\}}\cr
&\ge\int\mu^+_\infty(d\hat\oo)\one_{\{B\leftrightarrow\infty\}}(\hat\oo)\one_{\{|\g^{\infty}_B(f|\hat\oo_{B_n\sm B})-\g^{\infty}_B(f|\hat\oo_{B^c})|>\d\}}=\mu_t(\{B\leftrightarrow\infty\})>0,
\end{split}
\end{equation*}
which is a contradiction. As above letting $B$ grow, we see that the set of discontinuity points has full mass. 

\medskip
As for the non-q-Gibbsian part, what remains to be shown is that in the asymmetric high-intensity regime any specification $\tilde\g$ for $\mu^+_{t_G}$ exhibits discontinuity points. For this note, that the above proof for the critical asymmetric low-intensity regime does not use the fact that we assume low intensity.  
\end{proof}

\section{Appendix}\label{Ap}

\subsection{Percolation properties of the WRM}
In this subsection we derive nontrivial percolation and non-percolation regimes for the WRM. 
Recall the classical boolean model (or Gilbert disc model) with interaction radius $2a$, see for example \cite[Chapter 8.1]{BoRi06}. 
Denote by $\l_{\rm c}$ its critical intensity.
The following percolation result is already partially proved in \cite{ChChKo95}.
\begin{lem}\label{Infinite_Clusters}
(1) Let $\mu\in\GG(\g)$ with $\l_+\ge\l_-$. If $\l_++\l_-<\l_{\rm c}$, then for all $x\in\R^d$ and $0<r<\infty$ we have
$$\mu(\{B_r(x)\leftrightarrow\infty\})=0.$$
%

(2) There exists $0<\zeta<1$ such that the following holds. Let $\mu\in\GG(\g^{\rm{sym}})$ in the symmetric regime, respectively $\mu^+$ in the asymmetric regime, then if $\l_++\l_->\l_{\rm c}/\zeta$, respectively $\l_+>\l_{\rm c}/\zeta$, for all $x\in\R^d$ and all $0<r<\infty$ we have
$$\mu(\{B_r(x)\leftrightarrow\infty\})>0\quad\text{ and }\quad\lim_{r\uparrow\infty}\mu(\{B_r(x)\leftrightarrow\infty\})=1.$$ 
\end{lem}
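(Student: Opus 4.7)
My plan is to attack the two parts of the lemma in opposite directions via comparison with the classical Boolean (Gilbert disc) model of intensity threshold $\l_{\rm c}$. For Part (1), I would show that the grey configuration under any $\mu\in\GG(\g)$ is stochastically dominated by a homogeneous Poisson point process of intensity $\l_++\l_-$. To obtain this domination, I would start from finite volumes: the Gibbsian specification $\g_\L(d\oo_\L|\oo_{\L^{\rm c}})$ is the product Poisson $P^{\l_+}_\L\otimes P^{\l_-}_\L$ conditioned on the hard-core color-compatibility event $\chi$. Projecting to the grey coordinate and exploiting that the conditioning event can only suppress points (via a standard monotone coupling, either directly or by invoking the machinery in~\cite{ChChKo95,GeHa96}), one obtains that the grey marginal of $\g_\L(\cdot|\oo_{\L^{\rm c}})$ is dominated by $P^{\l_++\l_-}_\L$, uniformly in the boundary condition. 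Passing this domination to the infinite-volume limit via monotonicity, together with $\l_++\l_-<\l_{\rm c}$, one concludes that no grey cluster can be unbounded under $\mu$, which yields $\mu(\{B_r(x)\leftrightarrow\infty\})=0$ for every $x$ and $r$.

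For Part (2), the strategy is to force percolation of a single color class by stochastic domination from below. I focus first on the asymmetric case $\mu^+$. Conditional on the configuration of minus particles $\o^-$, the plus particles $\o^+$ under $\mu^+$ form a Poisson process of intensity $\l_+$ restricted to the set $\{x:B_{2a}(x)\cap \o^-=\es\}$. The FKG property of the WRM under the natural partial order $+>-$, together with the plus boundary condition used to build $\mu^+$, implies that $\o^-$ under $\mu^+$ is stochastically smaller than under any other Gibbs measure. The crucial technical step is to produce a constant $\zeta\in(0,1)$, independent of $\l_+$ in the supercritical range, such that the void probability $\mu^+(B_{2a}(x)\cap \o^-=\es)\ge\zeta$ uniformly in $x$. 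Given such a $\zeta$, the plus-particle process under $\mu^+$ stochastically dominates a homogeneous PPP of intensity $\zeta\l_+$, and choosing $\l_+>\l_{\rm c}/\zeta$ places this latter process in the supercritical Boolean regime; percolation of plus particles is then inherited by the grey WRM. The symmetric case is handled identically, except that one can use either color class so the relevant intensity is the total $\l_++\l_-$.

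The concluding limit $\lim_{r\uparrow\infty}\mu(\{B_r(x)\leftrightarrow\infty\})=1$ I would derive by a routine ergodicity argument: since $\mu$ (respectively $\mu^+$) is translation invariant and has a tail-trivial ergodic decomposition, the event \emph{some infinite cluster exists} has probability $0$ or $1$; positivity from the previous step pins this probability to $1$, and the events $\{B_r(x)\leftrightarrow\infty\}$ exhaust the full-measure event monotonically as $r\uparrow\infty$.

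The main obstacle will be the construction of the uniform lower bound $\zeta$ in Part (2). A naive estimate would yield a $\zeta$ depending on $\l_-$ (or $\l_++\l_-$) in such a way that $\zeta\l_+$ might fail to cross $\l_{\rm c}$ at any finite intensity. The trick should be to invoke FKG once more to compare $\o^-$ under $\mu^+$ with $\o^-$ in an auxiliary reference system (e.g.\ a pure Poisson process of minus particles repelled by a plus halfspace) where a Peierls-type estimate gives an explicit and $\l_+$-independent upper bound on the local density of minus particles. Part (1), by contrast, should reduce to a one-page coupling argument once the domination principle is set up.
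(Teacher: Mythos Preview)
Your Part~(1) is essentially the paper's argument, phrased through coupling rather than through FKG with a decreasing density; both boil down to the observation that the grey WRM specification in a finite volume is a Poisson process of intensity $\l_++\l_-$ tilted by a decreasing functional (the color-compatibility indicator averaged over colorings), hence stochastically below the free Poisson process.

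Part~(2), however, contains a real gap. Your crucial inference ``void probability $\mu^+(B_{2a}(x)\cap\o^-=\es)\ge\zeta$ implies that $\o^+$ stochastically dominates a homogeneous PPP of intensity $\zeta\l_+$'' is false in general. Conditional on $\o^-$, the plus particles are indeed Poisson on the random open set $A=\{x:d(x,\o^-)\ge 2a\}$, but a marginal bound $P(x\in A)\ge\zeta$ does not yield domination of PPP($\zeta\l_+$): the openness events are correlated through $\o^-$, and the annealed process can have arbitrarily large voids with uniformly positive probability. (Think of the toy case $A=\R^d$ with probability $\zeta$ and $A=\es$ otherwise: then $P(\o^+\cap\L=\es)\ge 1-\zeta$ for every bounded $\L$, which is incompatible with dominating any nondegenerate PPP.) The auxiliary Peierls comparison you sketch would at best sharpen the marginal void bound; it does not repair the domination step.

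The paper bypasses this by bounding not a marginal quantity but the \emph{conditional intensity} of the grey process: writing $W_\L(\o_\L)$ for the grey density (the normalised sum over admissible colorings), it shows that the ratio $W_\L(\o_\L\cup\{y\})/W_\L(\o_\L)$ is bounded below uniformly in $\L,\o_\L,y$ and the boundary condition. The bound is purely combinatorial --- it counts how many existing clusters a new ball can merge, hence is controlled by the kissing number in dimension $d$ --- and the resulting constant $\zeta=\zeta(d)$ is therefore independent of $\l_\pm$, which dissolves the intensity-dependence worry you flagged. With this ratio bound, $\zeta^{-|\o_\L|}W_\L(\o_\L)$ is an \emph{increasing} functional, and a second application of FKG (now with two increasing functions under a PPP of intensity $\zeta\l$) gives $\g_\L(\{B\leftrightarrow\partial\L\}\mid\cdot)\ge P^{\zeta\l}(\{B\leftrightarrow\partial\L\})$ directly. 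The asymmetric case follows the same scheme with a short case analysis on how the added point interacts with clusters and the plus boundary, yielding comparison with $P^{\zeta\l_+}$.
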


\begin{proof}[Proof of Lemma~\ref{Infinite_Clusters}]
The proof uses the FKG-inequality to derive stochastic domination relations between the WRM and the Gilbert disc model. 
Recall the FKG-inequality for PPP as presented for example in \cite[Lemma 2.1]{Ja84}: For a PPP $P$ we have
\begin{equation*} 
\begin{split}
P(fg)\ge P(f)P(g)
\end{split}
\end{equation*}
for measurable functions $f,g$ which are either both increasing or both decreasing. A function $f$ is called increasing if $f(\o)\ge f(\o')$ for all $\o\supset\o'$ and decreasing if $f(\o)\le f(\o')$ for all $\o\supset\o'$. 
%
%
%
%
%
%
%

\medskip
Note that, for a measurable increasing function $f$, only depending on the grey configuration and $\L\Subset\R^d$, we have 
\begin{equation*} 
\begin{split}
\g_\L(f|\oo_{\L^c})&=Z^{-1}_\L(\oo_{\L^c})\int P_\L(d\o_\L) f(\o_\L \o_{\L^c})\int U(d\s_{\o_{\L}})\chi(\o^{\s_{\o_\L}}_\L\oo_{\L^c})\cr
&=\int P_\L(d\o_\L) f(\o_\L \o_{\L^c})W_\L^{\oo_{\L^c}}(\o_\L)\cr
\end{split}
\end{equation*}
where $W_\L^{\oo_{\L^c}}(\o_\L)=Z^{-1}_\L(\oo_{\L^c})\sum_{\s_{\o_\L}} U(\s_{\o_{\L}})\chi(\o^{\s_{\o_\L}}_\L\oo_{\L^c})$ is the grey-configuration density of the specification with respect to the underlying PPP. 
%
Note that $W_\L^{\oo_{\L^c}}$ is decreasing and for any $x\in\R^d$ and $0<r<n<\infty$, the function $\one_{\{B_r(x)\leftrightarrow B^c_n(x)\}}$
is increasing. Thus, by the FKG-inequality, 
\begin{equation*} 
\begin{split}
\g_{B_n}(\{B_r(x)\leftrightarrow B^c_n(x)\}|\oo_{B^c_n(x)})&\le\int P_{B_n(x)}(d\o_{B_n(x)})\one_{\{B_r(x)\leftrightarrow B^c_n(x)\}}(\o_{B_n(x)}).
\end{split}
\end{equation*}
Letting $n$ tend to infinity we see that if $\l<\l_{\rm c}$, the right hand side converges to zero which proves part (1).

\medskip
As for part in (2), note that
%
%
%
if 
\begin{equation*} 
\begin{split}
\zeta=\inf_{\L,\o_\L\subset\L,y\in\L,\oo_{\L^c}}\frac{W_\L^{\oo_{\L^c}}(\o_\L \cup\{y\})}{W_\L^{\oo_{\L^c}}(\o_\L)}
\end{split}
\end{equation*}
exists, then $\hat W_\L^{\oo_{\L^c}}(\o_\L)=\zeta^{-|\o_\L|}W_\L^{\oo_{\L^c}}(\o_\L)$ is increasing since 
\begin{equation*} 
\begin{split}
\frac{\zeta^{-(|\o_\L|+1)}W_\L^{\oo_{\L^c}}(\o_\L \cup \{y\})}{\zeta^{-|\o_\L|}W_\L^{\oo_{\L^c}}(\o_\L)}\ge1.
\end{split}
\end{equation*}
As shown in \cite[Corollary]{ChChKo95}, in the symmetric case, $\zeta=\zeta(d)$ exists with $\zeta(1)=2^{-2}$, $\zeta(2)= 2^{-6}$ and $\zeta(d)\ge 2^{-3^d}$ for $d\ge3$. The exponents here correspond to the greatest kissing numbers for $d$-dimensional spheres. Hence we can rewrite, with $B=B_r(x)$,
\begin{equation*} 
\begin{split}
\g_{B_n}(\{B\leftrightarrow B^c_n\}|&\oo_{{B}^c})=e^{\l|B_n|(\zeta-1)}e^{-\l\zeta|B_n|}\sum_{n=0}^\infty\frac{(\l\zeta)^n}{n!}\int_{B_n^n}d \o_n \one_{\{B\leftrightarrow B^c_n\}}(\o_n)\hat W_{B_n}^{\oo_{B_n^c}}(\o_n)\cr
&=e^{2\l|B_n|(\zeta-1)}\int P^{\l\zeta}_{B_n}(d \o_{B_n})\one_{\{B\leftrightarrow B^c_n\}}(\o_{B_n})\hat W_{B_n}^{\oo_{B_n^c}}(\o_{B_n})\cr
&\ge e^{\l|B_n|(\zeta-1)}\int P^{\l\zeta}_{B_n}(d \o_{B_n})\one_{\{B\leftrightarrow B^c_n\}}(\o_{B_n})\int P^{\l\zeta}_{B_n}(d \o_{B_n})\hat W_{B_n}^{\oo_{B_n^c}}(\o_{B_n})\cr
&=\int P^{\l\zeta}_{B_n}(d \o_{B_n})\one_{\{B\leftrightarrow B^c_n\}}(\o_{B_n})
\end{split}
\end{equation*}
where $P^{\l\zeta}$ is the PPP with intensity $\l\zeta$.
Consequently $$\mu(\{B_r(x)\leftrightarrow \infty\})\ge P^{\l\zeta}(\{B_r(x)\leftrightarrow\infty\})$$ and for $\l\zeta>\l_{\rm c}$ we have that $P^{\l\zeta}(\{B_r(x)\leftrightarrow \infty\})>0$ for all $x\in\R^d$ and $0<r<\infty$.

\medskip
%
%
As for $\mu^+$
consider the boundary condition $+_{\L^c}$ of all plus. 
In this case, positive lower bounds on
\begin{equation*}\label{Infimum}
\begin{split}
\inf_{\L,\o_\L\subset\L,y\in\L}\frac{W_\L^{\oo_{\L^c}}(\o_\L y)}{W_\L^{\oo_{\L^c}}(\o_\L)}
\end{split}
\end{equation*}
are slightly more difficult to obtain in comparison to the symmetric case. Indeed, let us exemplify the idea in one spatial dimension. Here the additional particle $y\in\L$ can either be 
\begin{enumerate}
\item directly attached to the boundary and 
\begin{enumerate}
\item isolated from any cluster,
\item gluing a cluster to the boundary,
\end{enumerate} 
\item not attached to the boundary and 
\begin{enumerate}
\item isolated from any cluster, 
\item attached to one cluster which is attached to the boundary, 
\item gluing two clusters which are both attached to the boundary, 
\item gluing two clusters which where both detached from boundary,
\item gluing two clusters where only one was attached to the boundary.
\end{enumerate} 
\end{enumerate}
To see, that a lower bound is given by $\hat\l_+\zeta(1)$, where $\zeta(d)$ is defined as in the asymmetric case, we use the cluster representation
\begin{equation*}
\begin{split}
&\frac{W_\L^{+_{\L^c}}(\o_\L y)}{W_\L^{+_{\L^c}}(\o_\L)}
=\frac{\big[\prod_{C\in\, \CC=\CC(\o_\L y)}\tilde\sum_{\s_C} U(\s_C)\big]\chi\big((\o_{\L}y)^{\s_\CC}+_{\L^c}\big)}{\big[\prod_{C\in\, \CC=\CC(\o_\L)}\tilde\sum_{\s_C} U(\s_C)\big]\chi\big(\o_\L^{\s_\CC}+_{\L^c}\big)}\cr
&=\frac{\big[\prod_{C\in\, \CC(\o_\L y): y\in C}\tilde\sum_{\s_C} U(\s_C)\big]\big[\prod_{C\in\, \CC(\o_\L y): y\not\in C}\tilde\sum_{\s_C} U(\s_C)\big]\chi\big((\o_{\L}y)^{\s_\CC}+_{\L^c}\big)}
{\big[\prod_{C\in\, \CC(\o_\L): B_{2a}(y)\cap C\neq\es}\tilde\sum_{\s_C} U(\s_C)\big]\big[\prod_{C\in\,\CC(\o_\L): B_{2a}(y)\cap C=\es}\tilde\sum_{\s_C} U(\s_C)\big]\chi\big(\o_\L^{\s_\CC}+_{\L^c}\big)}.
\end{split}
\end{equation*}
Now it suffices to consider the clusters which are not affected by the additional particle $y$. Under the color constraint we find the estimates
\begin{equation*}
\begin{split}
\frac{\prod_{C\in\, \CC=\CC(\o_\L y): y\in C}\tilde\sum_{\s_C} U(\s_C)}
{\prod_{C\in\, \CC=\CC(\o_\L): B_{2a}(y)\cap C\neq\es}\tilde\sum_{\s_C} U(\s_C)}\ge
\begin{cases}
\hat\l_+, \text{ in the cases (1a), (2a), (2b), (2c)}\\
\frac{\hat\l_+^{|C|+1}}{\hat\l_+^{|C|}+\hat\l_-^{|C|}}\ge\frac{\hat\l_+}{2}, \text{ in the cases (1b), (2e)}\\
\frac{\hat\l_+^{|C_1|+|C_2|+1}+\hat\l_-^{|C_1|+|C_2|+1}}{(\hat\l_+^{|C_1|}+\hat\l_-^{|C_1|})(\hat\l_+^{|C_2|}+\hat\l_-^{|C_2|})}\ge\frac{\hat\l_+}{4}, \text{ in the case (2d)}.
\end{cases}
\end{split}
\end{equation*}
%
%
Similar observations, in view of the dimension-dependent kissing numbers, lead to the following lower bounds in higher dimensions. For $d=2$ we have 
\begin{equation*}
\begin{split}
\inf_{\L,\o_\L\subset\L,y\in\L}\frac{W_\L^{\oo_{\L^c}}(\o_{n-1}y)}{W_\L^{\oo_{\L^c}}(\o_{n-1})}\ge\hat\l_+2^{-6}=\hat\l_+\zeta(2)
\end{split}   
\end{equation*}
and for $d\ge 3$ the bound $\hat\l_+\zeta(d)$. Using the FKG-inequality as in the symmetric case with $B=B_r(x)$, we get a lower bound
\begin{equation*} 
\begin{split}
\g_{B_n(x)}(\{B(x)\leftrightarrow B^c_n(x)\}|&\oo_{{B(x)}^c})\ge\int P^{\l_+\zeta}_{B_n(x)}(d \o_{B_n(x)})\one_{\{B(x)\leftrightarrow B^c_n(x)\}}(\o_{B_n(x)})
\end{split}
\end{equation*}
where $P^{\l_+\zeta}$ is the PPP with intensity $\l_+\zeta$. This concludes the proof.
\end{proof}

\subsection{Existence of non-asq-specifications $\g^+\neq\g^-$ for $\mu^+$ and $\mu^-$ in the phase-transition regime}\label{Ap_2}
In this subsection we provide the reader with the following additional information: With our techniques it is still possible to exhibit specifications even in the non-almost surely Gibbsian regime. These are different for the two extremal starting measures and they are of course non-almost surely quasilocal. 
First note that by Lemma~\ref{Almost_Gibbs_Crit_Conc}, 
\begin{equation*}
\begin{split}
\mu^+_{t}(\{\hat\oo\in\OO: \liminf_{n\uparrow\infty}m(\hat\oo_{C\cap B_n})> 0\text{ for all infinite clusters }C\text{ of } \hat\oo\})=1.
\end{split}
\end{equation*}
In words, under the time evolution a magnetization plus one on an infinite cluster remains positive for all finite times. By symmetry, the same is true for the minus magnetization. In light of the specification $\g^\infty$ of Section~\ref{InfVolGibbsCase}, and in particular Lemma~\ref{Gibbs_Spec}, (non-almost-surely quasilocal) specifications for $\mu_t^\pm$ can be defined as
\begin{equation*}
\begin{split}
&\g^{\pm}_\L(f|\hat\oo_{\L^{\rm c}})=\frac{\int P^-_\L(d \o_\L)f^\pm(\o_\L)\prod_{C\in\CC^{\rm f}_{\L}(\o_\L)}\big(1+\r(\hat\oo_{C\sm \L})\big)}
{\int P^-_\L(d \o_\L)\prod_{C\in\CC^{\rm f}_{\L}(\o_\L)}\big(1+\r(\hat\oo_{C\sm \L})\big)}
\end{split}
\end{equation*}
where $f^\pm(\o_\L)=\nu^\pm_\L(f(\o_\L,\cdot)|\hat\oo_{\L^{\rm c}},\o_\L)$ with  
\begin{align*}
&\nu^\pm_\L(\hat\s_{\o_\L}|\hat\oo_{\L^{\rm c}},\o_\L)=\prod_{C\in\CC^\infty_\L(\o_\L)}p_t(\pm,+)^{|\hat\s_{C\cap \L}|^+}p_t(\pm,-)^{|\hat\s_{C\cap \L}|^-}\times\cr
&\frac{\prod_{C\in\CC^{\rm f}_\L(\o_\L)}\big(p_t(+,+)^{|\hat\s_{C\cap \L}|^+}p_t(+,-)^{|\hat\s_{C\cap \L}|^-}+p_t(-,+)^{|\hat\s_{C\cap \L}|^+}p_t(-,-)^{|\hat\s_{C\cap \L}|^-}\r(\hat\oo_{C\sm \L})\big)}
{\prod_{C\in\CC^{\rm f}_\L(\o_\L)}\big(1+\r(\hat\oo_{C\sm \L})\big)}.
\end{align*}


\end{document}